\newtheorem{thm}{Theorem}[section]
\newtheorem{lemma}[thm]{Lemma}
\newtheorem{prop}[thm]{Proposition}
\newtheorem{cor}[thm]{Corollary}
\theoremstyle{definition}
\newtheorem{defi}[thm]{Definition}
\newtheorem{rem}[thm]{Remark}
\newtheorem{ex}[thm]{Example}
\newtheorem{con}[thm]{Construction}
\def\Z{\mathds Z}
\def\Q{\mathds Q}
\def\phi{\varphi} 
\def\rho{\varrho} 
\def\id{{\it id}}
\def\A{\mathds A}
\def\P{\mathds P}
\def\G{\mathds G}
\def\O{\mathcal O}
\DeclareMathSymbol{\minus}{\mathord}{operators}{"2D}
\DeclareMathOperator{\Mor}{Mor}
\DeclareMathOperator{\Hom}{Hom}
\DeclareMathOperator{\Isom}{Isom}
\DeclareMathOperator{\Proj}{Proj}
\DeclareMathOperator{\Div}{Div}
\DeclareMathOperator{\divis}{div}
\DeclareMathOperator{\Sym}{Sym}
\DeclareMathOperator{\Id}{Id}
\DeclareMathOperator{\conv}{conv}
\DeclareMathOperator{\QCoh}{QCoh}
\DeclareMathOperator{\DG}{DG}
\begin{document}

\title[Toric orbifolds associated to Cartan matrices]
{Toric orbifolds associated to Cartan matrices} 
\author[Mark Blume]{Mark Blume}
\thanks{\newline\indent 
Supported by DFG-Schwerpunkt 1388 Darstellungstheorie.
\newline}
\address{Mathematisches Institut, Universit\"at M\"unster,\newline
Einsteinstrasse 62, 48149 M\"unster, Germany}
\email{mark.blume@uni-muenster.de}

\begin{abstract}
We investigate moduli stacks of pointed chains
of projective lines related to the Losev-Manin moduli spaces
and show that these moduli stacks coincide with certain toric 
stacks which can be described in terms of the Cartan matrices 
of root systems of type $A$. We also consider variants of 
these stacks related to root systems of type $B$ and $C$.
\end{abstract}

\maketitle

\setlength{\parskip}{2mm}
\tableofcontents
\setlength{\parskip}{0mm}

\section*{Introduction}
\medskip

The Losev-Manin moduli spaces $\overline{L}_n$, introduced in 
\cite{LM00}, parametrise isomorphism classes of stable $n$-pointed 
chains of $\P^1$. The space $\overline{L}_n$ forms a 
compactification of the torus $(\G_m)^n/\G_m$ that parametrises 
$n$ points $s_1,\ldots,s_n$ in $\P^1\setminus\{0,\infty\}=\G_m$ 
up to automorphisms of $\P^1$ fixing the two points $0,\infty$. 
It is a smooth projective toric variety isomorphic to the 
toric variety $X(A_{n-1})$ associated with the root system 
$A_{n-1}$, see \cite{BB11a}.

\pagebreak

In the present paper we are concerned with a variant of the 
Losev-Manin moduli spaces which arises as a compactification
of the moduli space of $n$ indistinguishable points in 
$\P^1\setminus\{0,\infty\}$, or equivalently, finite 
subschemes of degree $n$ in $\P^1\setminus\{0,\infty\}$.
Isomorphism classes of such subschemes correspond to 
polynomials of the form $y^n+a_{n-1}y^{n-1}+\ldots+a_1y+1$ 
up to multiplication of the variable $y$ by an $n$-th root 
of unity.
The torus $(\G_m)^{n-1}$, parametrising polynomials with 
non-zero coefficients $a_1,\ldots,a_{n-1}$, is compactified 
by the moduli stack of chains of $\P^1$ with finite 
subschemes of degree $n$. 
On the boundary both the coefficients of the polynomials
may become zero and the curve may become a reducible chain 
of $\P^1$.
The category of these pointed curves, which we call 
degree-$n$-pointed chains of $\P^1$, forms an 
orbifold $\overline{\mathcal L}_n$. 

\medskip

The orbifold $\overline{\mathcal L}_n$ is related to the Losev-Manin 
moduli space $\overline{L}_n$ by an $S_n$-equi-variant morphism 
$\overline{L}_n\to\overline{\mathcal L}_n$, $\overline{L}_n$
with the operation of the symmetric group $S_n$ that permutes the 
$n$ sections and $\overline{\mathcal L}_n$ with trivial operation, 
which is given by mapping an $n$-pointed chain of $\P^1$
to the corresponding degree-$n$-pointed chain by forgetting the 
labels of the sections. 
The moduli stack $\overline{\mathcal L}_n$ is defined such that 
the morphism $\overline{L}_n\to\overline{\mathcal L}_n$ is closely 
related to morphisms of the form $C_0^n\to C_0^n/S_n=C_0^{(n)}
=\Div_{C_0/Y}^n$ from the $n$-fold product over $Y$ to the scheme 
of relative effective Cartier divisors of degree $n$ for 
$C_0\to Y$ a relative smooth curve over $Y$, here a chain of 
$\P^1$ over $Y$ without the poles of the components 
of the fibres.
Therefore the morphism $\overline{L}_n\to\overline{\mathcal L}_n$ 
inherits properties like being faithfully flat and finite of degree 
$n!$ and  being ramified exactly in the points corresponding to curves 
with coinciding marked points, see proposition \ref{prop:L_n-L_n}.
The stack $\overline{\mathcal L}_n$ differs from the quotient stack 
$[\overline{L}_n/S_n]$, it has the same points but different 
automorphism groups. The coarse moduli space of 
$\overline{\mathcal L}_n$ coincides with the quotient 
$\overline{L}_n/S_n$.

\medskip

A main result of this paper, theorem \ref{thm:modulistack-toricstack}, 
is an explicit description of the structure of the stacks 
$\overline{\mathcal L}_n$: we show that $\overline{\mathcal L}_n$ 
is a toric orbifold and we determine the associated combinatorial data.

\medskip

Toric Deligne-Mumford stacks over fields of characteristic 
$0$ were introduced in \cite{BCS05} and constructed from 
combinatorial data called (simplicial) stacky fans,   
consisting of a simplicial fan and some extra data, as  
quotient stacks $[U/T]$ of an open subscheme $U$ of some affine space 
by a diagonalisable group scheme $G$, generalising the quotient 
construction of a smooth toric variety described in \cite{Co95a}. 
Over more general base schemes in the same way these data give 
rise to toric stacks which are not necessarily Deligne-Mumford stacks
but tame stacks in the sense of \cite{AOV08}.
As our moduli problem results in stacks which are orbifolds, in this 
paper we are mainly concerned with toric orbifolds, i.e.\ toric tame 
stacks with trivial generic stabiliser. We will work with toric 
orbifolds over the integers, considering the fact that our moduli 
problem is naturally defined over the integers.

\bigskip\bigskip

\pagebreak

It turns out that the moduli stacks $\overline{\mathcal L}_n$ can 
be described in terms of the Cartan matrices of root systems of 
type $A$, more precisely, $\overline{\mathcal L}_n$ is isomorphic 
to the toric orbifold $\mathcal Y(A_{n-1})$ which corresponds to 
the stacky fan $\mathbf\Upsilon(A_{n-1})$ defined in section 
\ref{sec:toricorbifoldsAn} using the Cartan matrix of the root 
system $A_{n-1}$.
For the proof of the isomorphism $\overline{\mathcal L}_n\cong
\mathcal Y(A_{n-1})$ we make use of a generalisation of the description 
of the functor of toric varieties \cite{Co95b} for toric stacks, 
which allows to characterise $\mathcal Y(A_{n-1})$ as a stack 
$\mathcal C_{\mathbf\Upsilon(A_{n-1})}$ of 
$\mathbf\Upsilon(A_{n-1})$-collections, i.e.\ collections of
pairs of a line bundle with a section and additional data.

\medskip

We also characterise the morphism $\overline{L}_n\to\overline{\mathcal L}_n$,
determined by forgetting the labels of the $n$ sections, in terms of the 
combinatorial data by specifying the $\mathbf\Upsilon(A_{n-1})$-collection 
on $X(A_{n-1})\cong\overline{L}_n$ corresponding to this morphism, see 
theorem \ref{thm:datamorphLM}.
In doing this, in section \ref{sec:LM} we compare the description of the
functor of the toric varieties $X(A_{n-1})$ associated with root systems 
of type $A$ after Cox \cite{Co95b} in terms of $\Sigma(A_{n-1})$-collections 
to two other descriptions: the description of \cite{BB11a} in terms of 
$A_{n-1}$-data and a new description involving $S_n$-invariant line 
bundles on $X(A_{n-1})$. Both of these are related to Minkowski sum 
decompositions of the permutohedron: the first is a decomposition into 
line segments and the second corresponds to an embedding 
$X(A_{n-1})\to\prod_{j=1}^{n-1}\P^{\binom{n}{j}-1}$ 
and expresses the permutohedron as sum of $S_n$-symmetric polytopes.

\medskip

Generalisations of the Losev-Manin moduli spaces were investigated
in \cite{BB11b}. We considered $(2n\!+\!1)$-pointed and $2n$-pointed 
chains of $\P^1$ with involution and showed that the 
moduli spaces $\overline{L}_n^{0,\pm}$ and $\overline{L}_n^{\pm}$ 
of these objects coincide with the toric varieties $X(B_n)$ and 
$X(C_n)$ associated with the root systems $B_n$ and $C_n$,
see \cite[Thm.\ 4.1 and 6.15]{BB11b}.

\medskip

In the present setting it makes sense to investigate similar 
generalisations of the moduli stacks $\overline{\mathcal L}_n$
and to relate these to the toric orbifolds $\mathcal Y(R)$
for root systems $R$ belonging to other classical families
as well as to the moduli spaces $\overline{L}_n^{0,\pm}\cong X(B_n)$ 
and $\overline{L}_n^{\pm}\cong X(C_n)$.
In section \ref{sec:BC} we consider moduli stacks of stable 
degree$\,$-$(2n\!+\!1)$-pointed and  
degree-$2n$-pointed chains of $\P^1$ with involution, 
$\overline{\mathcal L}_n^{0,\pm}$ and $\overline{\mathcal L}_n^{\pm}$. 
We show that $\overline{\mathcal L}_n^{\pm}$ has a main
component $\overline{\mathcal L}_{n,+}^{\pm}$ isomorphic to 
$\mathcal Y(C_n)$ and that $\overline{\mathcal L}_n^{0,\pm}$ 
is isomorphic to $\mathcal Y(B_n)^{\textrm{can}}$, the 
canonical stack associated to $\mathcal Y(B_n)$ (see \cite{FMN10}).
We have morphisms
$\overline{L}_n^{0,\pm}\to\overline{\mathcal L}_n^{0,\pm}$
and $\overline{L}_n^{\pm}\to\overline{\mathcal L}_{n,+}^{\pm}$,
defined by forgetting the labels of the sections, which are
equivariant under the Weyl group.

\medskip

\noindent
{\it Acknowledgements.} 
Thanks to Victor Batyrev.

\bigskip\bigskip

\pagebreak
\section{Moduli stacks of degree-$n$-pointed chains}
\label{sec:modulistacks}
\medskip

We define moduli stacks of stable degree-$n$-pointed chains of $\P^1$.
Compared to the Losev-Manin moduli spaces considered in \cite{LM00}, 
\cite{BB11a}, we replace the $n$ marked points $s_1,\ldots,s_n$ of 
an $n$-pointed chain of $\P^1$ by a finite closed subscheme $S$ of 
degree $n$.

\begin{defi}\label{def:L_n}
A stable degree-$n$-pointed chain of $\P^1$ 
over an algebraically closed field $K$ is a tuple
$(C,s_-,s_+,S)$, where $C$ is a chain of $\P^1$ 
over $K$ with two distinct closed points $s_-,s_+$ on 
the outer components such that on each component the number of 
intersection points together with $s_-,s_+$ adds up to $2$
(cf.\ \cite[Def.\ 3.1]{BB11a}), 
and $S\subset C$ a finite closed subscheme of degree $n$ 
that does neither meet the intersection points of components 
nor $s_-,s_+$, but that does meet every component of $C$.
We define the category $\overline{\mathcal L}_n$ of stable 
degree-$n$-pointed chains of $\P^1$ over the category 
of schemes.
The objects over a scheme $Y$ are stable degree-$n$-pointed 
chains of $\P^1$ over $Y$, i.e.\ tupels 
$\mathscr C=(C\!\to\!Y,s_-,s_+,S)$, where $C\to Y$ is a 
locally finitely presented, flat, proper 
morphism of schemes, $s_-,s_+\colon Y\to C$ are sections 
and $S\subset C$ is a subscheme finite flat over $Y$, such that 
the geometric fibres are stable degree-$n$-pointed chains of $\P^1$.
We have the natural notion of isomorphism of degree-$n$-pointed 
chains of $\P^1$ over the same scheme $Y$ and of 
pullback of an object over a scheme $Y$ with respect to a 
morphism $f\colon Y'\to Y$.
A morphism in $\overline{\mathcal L}_n$ over a morphism 
$f\colon Y'\to Y$ is a cartesian diagram of stable degree-$n$-pointed 
chains of $\P^1$ over $f$.
\end{defi}

\begin{rem}
(1) For a chain of $\P^1$ $(C,s_-,s_+)$ over a field $K$
any component is isomorphic to $\P^1_K$ since it contains a point
with residue field $K$.\\
(2) As the morphisms $C\to Y$ are locally finitely presented, 
by \cite[IV, (8.9.1)]{EGA} we can use some results which originally  
require some noetherian hypothesis.
\end{rem}

\begin{rem}\label{rem:autom}
The automorphism group of a chain of $\P^1$ $(C,s_-,s_+)$
of length $l$ over a field $K$ is a torus $(\G_m)_K^l$. 
A stable degree-$n$-pointed chain of $\P^1$
$(C,s_-,s_+,S)$ of length $l$ over $K$ has a finite automorphism 
group scheme which is a subgroup scheme of $(\G_m)_K^l$. 
There are objects $(C,s_-,s_+,S)$ having nontrivial automorphisms:
consider for example $\P^1_K$ with homogeneous
coordinates $z_0,z_1$, two poles $s_-\!=\!(1\!:\!0)$, 
$s_+\!=\!(0\!:\!1)$ and a subscheme $S$ of degree $k$ given by 
the equation $z_0^k-z_1^k=0$; in this example we have an 
automorphism group scheme isomorphic to $\mu_k$.
\end{rem}

\begin{prop}\label{prop:L_n-stack}
The category $\overline{\mathcal L}_n$ is a category fibred in
groupoids over the category of schemes. It forms a stack over the 
fpqc site of schemes with representable, finite diagonal. 
Over fields of characteristic $0$ the diagonal is unramified.
\end{prop}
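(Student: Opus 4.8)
The plan is to verify the three assertions of Proposition~\ref{prop:L_n-stack} in turn: that $\overline{\mathcal L}_n$ is a category fibred in groupoids (CFG), that it satisfies fpqc descent and hence is a stack, that its diagonal is representable and finite, and finally that in characteristic $0$ the diagonal is unramified.

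First I would check the CFG axioms. By Definition~\ref{def:L_n} every morphism in $\overline{\mathcal L}_n$ lies over a morphism of schemes and is a cartesian diagram of the defining data $(C\to Y, s_-, s_+, S)$; pullback along $f\colon Y'\to Y$ is given by base change of the flat proper family $C$, the two sections, and the finite flat subscheme $S$, all of which are stable under base change. The universal property of fibre products makes pullbacks unique up to canonical isomorphism, so $\overline{\mathcal L}_n$ is fibred in groupoids, and since any morphism over an isomorphism of base schemes is an isomorphism of the fibred data, the fibre categories are groupoids.

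Next, for the stack property over the fpqc site I would invoke fpqc descent for each piece of the data. Given an fpqc cover $\{Y_i\to Y\}$ with compatible objects $\mathscr C_i$ and descent data, the curves $C_i\to Y_i$ glue because a flat proper morphism, or more safely a family with relatively ample or suitably polarised fibres, descends along fpqc morphisms (the chains of $\P^1$ carry such a canonical polarisation from the sections $s_\pm$ and the subscheme $S$); the sections $s_\pm$ and the finite flat subscheme $S$ descend by fpqc descent for morphisms and for quasi-coherent sheaves. Here I should use the remark that $C\to Y$ is locally finitely presented so that \cite[IV, (8.9.1)]{EGA} lets me spread out and reduce descent statements to the noetherian case where the needed effectivity results for proper flat families are available. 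Morphisms (isomorphisms) between objects also satisfy descent because $\Isom$-sheaves of the underlying data are fpqc sheaves. Assembling these, any descent datum is effective and $\overline{\mathcal L}_n$ is a stack.

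For representability and finiteness of the diagonal, the key observation is that the diagonal encodes the isomorphism functor: for objects $\mathscr C, \mathscr C'$ over $Y$ the fibre product $Y\times_{\overline{\mathcal L}_n\times\overline{\mathcal L}_n}\overline{\mathcal L}_n$ is $\underline{\Isom}_Y(\mathscr C,\mathscr C')$. I would show this Isom-functor is representable by a scheme finite over $Y$: isomorphisms of chains of $\P^1$ preserving $s_\pm$ form a closed subscheme of a relative automorphism scheme which, fibrewise by Remark~\ref{rem:autom}, sits inside the torus $(\G_m)^l$, and imposing that the isomorphism carry $S$ to $S'$ cuts out by Remark~\ref{rem:autom} a finite group scheme in each fibre (a subgroup scheme of $(\G_m)^l$, finite since $S$ meets every component and avoids the nodes and poles, rigidifying the torus action). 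Representability follows from the standard Hilbert-scheme/Hom-scheme machinery for proper flat families once one bounds and closes the conditions "isomorphism of curves", "compatible with the two sections", and "matching the degree-$n$ subschemes"; properness of the Isom-scheme follows from the valuative criterion using stability, and finiteness of fibres upgrades this to a finite morphism. The main obstacle I expect is precisely this finiteness input: one must argue carefully that the stability condition (each component meeting $S$, with the intersection and pole count constraint) forces the relative automorphism group scheme fixing the decorated data to be quasi-finite, and then combine quasi-finiteness with properness to conclude finiteness.

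Finally, for the unramifiedness of the diagonal in characteristic $0$, I would show that the finite group schemes arising as automorphism groups are \'etale over the base. In characteristic $0$ any finite subgroup scheme of a torus $(\G_m)^l$ is reduced, hence \'etale, because it is diagonalisable and its Cartier dual is a finite constant group; by Remark~\ref{rem:autom} the automorphism group schemes are exactly such finite subgroup schemes of $(\G_m)^l$, so they are \'etale and the diagonal is unramified. This is where the characteristic hypothesis enters essentially, since in positive characteristic the example $\mu_k$ of Remark~\ref{rem:autom} can fail to be \'etale when $p\mid k$.
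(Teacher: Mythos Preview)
Your outline is sound and would lead to a proof, but the paper takes a more concrete and economical route by exploiting one specific tool that you only gesture at: the line bundle $\O_C(S)$. The paper shows directly that $\O_C(S)$ is relatively ample (in fact very ample, giving a closed embedding $C\hookrightarrow\P_Y(\pi_*\O_C(S))\cong\P^n_Y$, proposition~\ref{prop:embY}), and that this line bundle is canonically compatible with pullback. This single observation serves double duty. For effectivity of descent, it supplies the canonical polarisation needed to invoke descent for proper flat morphisms with a relatively ample sheaf (\cite[Thm.\ 4.38]{Vi05}); no noetherian reduction or spreading-out is required. For the diagonal, the embedding into $\P^n_Y$ (with $n$ fixed, independent of the chain length) realises $\Isom(\mathscr C,\mathscr C')$ directly as a closed subgroup scheme of the dense torus of $\P^n_Y$, which is visibly finite over $Y$; this bypasses both the Hilbert/Hom-scheme machinery and the valuative-criterion argument you sketch. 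Your fibrewise picture of automorphisms inside $(\G_m)^l$ is correct but awkward to globalise since $l$ varies over $Y$, whereas the ambient torus of $\P^n_Y$ is uniform over the base. Your treatment of the CFG axioms and of unramifiedness in characteristic $0$ matches the paper's.
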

\begin{proof}
The category $\overline{\mathcal L}_n$ together with the
natural functor to the category of schemes is a fibred category,
the cartesian arrows being cartesian diagrams of degree-$n$-pointed 
chains, and moreover the fibres $\overline{\mathcal L}_n(Y)$ over 
schemes $Y$ form a groupoid.
 
The fibred category $\overline{\mathcal L}_n$ is a prestack in the 
fpqc topology, i.e.\ descent data for morphisms are effective, see 
for example \cite[Prop.\ 4.31]{Vi05}. 
To show that $\overline{\mathcal L}_n$ is a stack, it remains
to show that descent data for objects are effective. 
Let $(\pi\colon C\to Y,s_-,s_+,S)$ be a stable degree-$n$-pointed 
chain of $\P^1$ over a scheme $Y$. 
The subscheme $S\subset C$ is an effective Cartier divisor in $C$ 
because this is true on the fibres, see \cite[Lemma 9.3.4]{Kl05}, 
and so its ideal sheaf $\mathscr I\subset\O_C$ is a line bundle. 
The line bundle $\O_C(S)=\mathscr I^{-1}$ is relatively ample with 
respect to $\pi$ since it is ample on the fibres, see 
\cite[III, (4.7.1)]{EGA}, \cite[IV, (9.6.5)]{EGA}.
In fact, $\O_C(S)$ defines a closed embedding in the
projective bundle $\P_Y(\pi_*\O_C(S))$, see proposition \ref{prop:embY}.
Given a morphism $F\colon(C'\!\to\!Y',s_-',s_+',S')\to
(C\!\to\!Y,s_-,s_+,S)$ of two degree-$n$-pointed chains over 
a morphism $f\colon Y'\to Y$ forming a cartesian diagram, we 
have a natural isomorphism $F^*\O_C(S)\cong\O_{C'}(S')$, and 
further, given morphisms $F$ and $G$ over $f\colon Y'\to Y$ 
and $g\colon Y''\to Y'$, after identifying $(FG)^*\O_C(S)$ with 
$G^*F^*\O_C(S)$ the isomorphisms $(FG)^*\O_C(S)\to\O_{C''}(S'')$ 
and $G^*F^*\O_C(S)\to G^*\O_{C'}(S')\to\O_{C''}(S'')$ coincide.
Then, by descent theory of flat proper morphisms of schemes 
with a relatively ample invertible sheaf, see \cite[Thm.\ 4.38]{Vi05}, 
descent data for objects of $\overline{\mathcal L}_n$ are effective.

We show that the diagonal $\overline{\mathcal L}_n\to
\overline{\mathcal L}_n\times\overline{\mathcal L}_n$ 
is representable and finite. 
For a scheme $Y$ and a morphism $Y\to\overline{\mathcal L}_n\times 
\overline{\mathcal L}_n$ given by two objects $\mathscr C,\mathscr C'
\in\overline{\mathcal L}_n(Y)$, the category 
$Y\times_{\overline{\mathcal L}_n\times\overline{\mathcal L}_n}
\overline{\mathcal L}_n$ fibred over the category of $Y$-schemes
is isomorphic to the functor on $Y$-schemes 
$\Isom(\mathscr C,\mathscr C')(f\colon Z\to Y)
=\Mor_{\overline{\mathcal L}_n(Z)}(f^*\mathscr C,f^*\mathscr C')$. 
Using the embedding via $\O_C(S)$ into $\P_Y(\pi_*\O_C(S))$ 
described in proposition \ref{prop:embY} we see that 
$\Isom(\mathscr C,\mathscr C')$ is a finite closed subgroup scheme 
of the open dense torus of $\P_Y(\pi_*\O_C(S))$. In characteristic $0$ 
it is unramified over $Y$, because then the fibres are reduced.
\end{proof}

\medskip

The stack $\overline{\mathcal L}_n$ is related to the Losev-Manin
moduli space $\overline{L}_n$ by a morphism 
$\overline{L}_n\to\overline{\mathcal L}_n$ that arises by considering 
the $n$ sections of an $n$-pointed chain over $Y$ as a relative 
effective Cartier divisor of degree $n$ over $Y$.

\begin{prop}\label{prop:L_n-L_n}
The morphism $\overline{L}_n\to\overline{\mathcal L}_n$ is 
faithfully flat and finite of degree $n!$.
It is ramified exactly in the points of $\overline{L}_n$
corresponding to $n$-pointed chains with some coinciding
marked points.
\end{prop}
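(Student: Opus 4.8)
The plan is to analyse the morphism $p\colon\overline{L}_n\to\overline{\mathcal L}_n$ by comparing it, fppf-locally on the target, with the symmetrisation morphism of a smooth relative curve. First I would note that, since $\overline{L}_n$ is a scheme and the diagonal of $\overline{\mathcal L}_n$ is representable by proposition \ref{prop:L_n-stack}, the morphism $p$ is representable; hence flatness, finiteness, the degree and the (un)ramification locus may all be checked after base change along an arbitrary morphism $Y\to\overline{\mathcal L}_n$ from a scheme. Fix such a $Y$, corresponding to an object $(C\to Y,s_-,s_+,S)\in\overline{\mathcal L}_n(Y)$, and let $C_0\subset C$ be the complement of the nodes and of the two sections $s_-,s_+$. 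Then $C_0\to Y$ is a smooth relative curve, $S$ is a relative effective Cartier divisor of degree $n$ contained in $C_0$, and $S$ defines a section $\sigma_S\colon Y\to\Div^n_{C_0/Y}$. I would identify the base change
\[
\overline{L}_n\times_{\overline{\mathcal L}_n}Y\;\cong\;Y\times_{\sigma_S,\,\Div^n_{C_0/Y}}C_0^{\,n},
\]
where $C_0^{\,n}=C_0\times_Y\cdots\times_Y C_0\to\Div^n_{C_0/Y}$, $(t_1,\ldots,t_n)\mapsto\sum_i[t_i]$, is the symmetrisation morphism; concretely the right-hand side is the scheme of orderings of $S$ into $n$ sections. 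Here I would also record that the stability condition is automatic on both sides: since $S$ meets every component of $C$, the support of any ordering of $S$ does too, so every ordering is itself a stable $n$-pointed chain.

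The step requiring genuine care is this identification, where the stacky structure of $\overline{\mathcal L}_n$ enters. A $T$-point of the fibre product is a triple consisting of a stable $n$-pointed chain $\xi$, a morphism $T\to Y$, and an isomorphism $\phi\colon p(\xi)\xrightarrow{\sim}\mathscr C_T$ in $\overline{\mathcal L}_n(T)$; transporting $\xi$ along $\phi$ rigidifies it to an ordering of $S_T$. The subtle point is that an automorphism $g$ of $(C_T,s_-,s_+,S_T)$ carries an ordering $(t_i)$ to another ordering $(g\,t_i)$, and although these two orderings are isomorphic as objects of the scheme $\overline{L}_n$, the corresponding triples are \emph{not} isomorphic in the fibre product, precisely because the compatibility with the gluing isomorphism $\phi$ would force $g=\id$. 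Thus no quotient by $\mathrm{Aut}(\mathscr C_T)$ is introduced, and one checks that ordering $\mapsto$ triple is a bijection on $T$-points, functorially in $T$; this is what pins down the fibre product as the ordering scheme rather than a twisted form of it.

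Granting the identification, the conclusion follows from the classical structure of the symmetrisation morphism of a smooth relative curve: $C_0^{\,n}\to\Div^n_{C_0/Y}=\Sym^n_Y C_0$ is finite and faithfully flat of constant degree $n!$, and is \'etale exactly over the locus of reduced divisors, hence ramified exactly along the big diagonal (cf.\ \cite{Kl05}). All of these properties are stable under the base change along $\sigma_S$ and are fppf-local on $\overline{\mathcal L}_n$, so $p$ is finite and faithfully flat of degree $n!$. Finally, the fibre product is \'etale over a point of $Y$ if and only if $S$ is reduced there, i.e.\ if and only if no two of the sections in an ordering coincide; translating back, $p$ is unramified exactly away from the points of $\overline{L}_n$ representing $n$-pointed chains with some coinciding marked points, as claimed. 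I expect the identification of the fibre product in the previous paragraph, rather than the final transfer of properties, to be the main obstacle.
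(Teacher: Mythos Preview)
Your proposal is correct and follows essentially the same route as the paper: both reduce by representability to an arbitrary base change $Y\to\overline{\mathcal L}_n$, introduce the smooth relative curve $C_0\subset C$, and identify the fibre product $\overline{L}_n\times_{\overline{\mathcal L}_n}Y$ with $Y\times_{C_0^{(n)}}C_0^n$ so that the conclusion follows from the standard properties of the symmetrisation map $C_0^n\to C_0^{(n)}=\Div^n_{C_0/Y}$. Your treatment of the stacky identification step is in fact somewhat more explicit than the paper's, which simply asserts that one may identify the chains $C'$ over $T$ with $C\times_YT$ via the given isomorphisms.
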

\begin{proof}
Note that the morphism is representable, since $\overline{\mathcal L}_n$
has representable diagonal. We show that for any morphism 
$Y\to\overline{\mathcal L}_n$, $Y$ a scheme, the morphism of schemes 
$Y\times_{\overline{\mathcal L}_n}\overline{L}_n\to Y$ has the properties 
in question. The morphism $Y\to\overline{\mathcal L}_n$ corresponds to an 
object $\mathscr C=(C\!\to\!Y,s_-,s_+,S)$ over $Y$ and the functor
$Y\times_{\overline{\mathcal L}_n}\overline{L}_n$ maps a scheme
$T$ to the set $\{(f\colon T\!\to\!Y,(C'\!\to\!T,s_-,s_+,s_1,\ldots,s_n),
\alpha)\:|\:\alpha\colon f^*\mathscr C\to
(C'\!\to \!T,s_-,s_+,s_1+\ldots+s_n)\}$, 
where $s_1+\ldots+s_n$ denotes the divisor of degree $n$ associated 
to the $n$ sections and $\alpha$ is a morphism in 
$\overline{\mathcal L}_n(T)$.
We denote by $C_0$ the open subscheme of $C$ obtained by excluding 
the poles and intersection points of components on the fibres. 
Then $C_0$ is a quasi-projective curve over $Y$, which is smooth over 
$Y$ since it is flat with smooth fibres (see \cite[IV, (17.5.1)]{EGA}).
We may, for any $T$, identify the chains $C'$ over $T$ occurring 
in the above sets with $C\times_YT$ via the specified isomorphisms.
The additional data given by the subscheme $S\subset C_0$ are 
equivalent to a section $s\colon Y\to\Div_{C_0/Y}^n=C_0^{(n)}$ 
of the scheme of relative effective divisors of degree $n$, 
which coincides with the $n$-fold symmetric product of $C_0$ over $Y$, 
see \cite[Expos\'e XVII, 6.3.9, p.\ 186]{SGA4(3)}. Likewise,
the data given by the sections $s_1,\ldots,s_n$ are equivalent to 
a section  $s'\colon T\to(C_0^n)_T$ of the $n$-fold product
such that its composition with $(C_0^n)_T\to(C_0^{(n)})_T$ is 
the base extension $s_T$ of $s$, or equivalently, to a morphism 
$s'\colon T\to C_0^n$ whose composition with $C_0^n\to C_0^{(n)}$ 
coincides with $s\circ f$.
Thus the functor $Y\times_{\overline{\mathcal L}_n}\overline{L}_n$
is isomorphic to the functor of the scheme $Y\times_{C_0^{(n)}}C_0^n$,
and this concludes the proof because the morphism $C_0^n\to C_0^{(n)}$ 
has the required properties.
\end{proof}

\begin{rem} 
With proposition \ref{prop:L_n-L_n} and some general theory we can 
derive some properties of the stack $\overline{\mathcal L}_n$: 
by \cite[Thm.\ 10.1]{LMB}, making use of the proposition, 
$\overline{\mathcal L}_n$ is an algebraic stack (Artin stack); 
in characteristic $0$, by \cite[Thm.\ 8.1]{LMB} and the fact 
that it has unramified diagonal, it is a Deligne-Mumford stack.
However, the result will follow independently later in section 
\ref{sec:L_n-toricstack} together with a more detailed description 
of the structure of $\overline{\mathcal L}_n$.
\end{rem}

On the Losev-Manin moduli space $\overline{L}_n$ we have an 
operation of the symmetric group $S_n$ permuting the $n$ sections.
Any $S_n$-equivariant morphism $\overline{L}_n\to Z$, 
$Z$ a scheme with trivial $S_n$-action, factors through
$\overline{L}_n\to\overline{\mathcal L}_n$. 
This implies that the quotient morphism $\overline{L}_n\to
\overline{L}_n/S_n$ factors as
\[
\overline{L}_n\;\to\;\overline{\mathcal L}_n\;\to\;\overline{L}_n/S_n,
\]
and moreover, as $\overline{L}_n\to\overline{\mathcal L}_n$ is an 
epimorphism, $\overline{\mathcal L}_n\to\overline{L}_n/S_n$ forms 
the coarse moduli space of $\overline{\mathcal L}_n$.

\begin{rem}
There is the quotient stack $[\overline{L}_n/S_n]$, which has the same 
geometric points as $\overline{\mathcal L}_n$.
However, the automorphism groups of objects of $[\overline{L}_n/S_n]$
differ from those of $\overline{\mathcal L}_n$ which are always
abelian.
\end{rem}

In the case of the Losev-Manin moduli spaces, the boundary divisors 
arise as images of closed embeddings $\overline{L}_m\times\overline{L}_n
\to\overline{L}_{m+n}$.
For the stacks $\overline{\mathcal L}_n$ we also have embeddings
$\overline{\mathcal L}_m\times\overline{\mathcal L}_n\to
\overline{\mathcal L}_{m+n}$, defined as in the Losev-Manin case 
by concatenation of chains, and the diagrams
\begin{equation}\label{eq:concat}
\begin{array}{ccc}
\\[-4mm]
\overline{L}_m\times\overline{L}_n
&\longrightarrow&\overline{L}_{m+n}\\
\downarrow&&\downarrow\\
\overline{\mathcal L}_m\times\overline{\mathcal L}_n
&\longrightarrow&\overline{\mathcal L}_{m+n}\\
\end{array}
\end{equation}
are commutative. 

The morphism $\overline{L}_n\to\overline{\mathcal L}_n$ maps
the open dense torus of $\overline{L}_n=X(A_{n-1})$, the moduli 
space of irreducible $n$-pointed chains, onto the moduli stack of 
irreducible degree-$n$-pointed chains. This open substack of 
$\overline{\mathcal L}_n$ parametrises subschemes $S$ of degree $n$ 
in $\P^1\setminus\{0,\infty\}$ modulo automorphisms of $\P^1$ fixing 
$0$ and $\infty$. 
An object over an algebraically closed field $K$ can described 
by a monic polynomial $\prod_{i=1}^n(y-s_i)$ of degree $n$ with 
$s_1,\ldots,s_n\in K^*$ determined up to scaling by a common factor 
$\lambda\in K^*$ and permutations. We can write this polynomial as 
\[
y^n\;-\;(s_1+\ldots+s_n)y^{n-1}\;+\quad\ldots\ldots\quad+\;
(-)^ns_1\cdots s_n
\]
where the coefficients are the symmetric polynomials in $s_1,\ldots,s_n$. 
Assuming\linebreak $s_1\cdots s_n=(-1)^n$, we have a polynomial of the form
\[
y^n\;+\;a_1y^{n-1}\;+\;a_2y^{n-2}\;+\quad\ldots\ldots\quad+\;
a_{n-1}y\;+\;1
\]
with coefficients $a_1,\ldots,a_{n-1}\in K$. The isomorphism class 
of the object determines these coefficients up to the equivalence 
$(a_1,\ldots,a_{n-1})\sim(\xi^{n-1}a_1,\ldots,\xi a_{n-1})$, 
$\xi$ an $n$-th root of unity. The moduli stack of such objects
is the quotient stack $[\A^{n-1}/\mu_n]$, where the group scheme $\mu_n$
of $n$-th roots of unity acts with weights $(n\!-\!1,\ldots,1)$.

\medskip

It contains an $(n\!-\!1)$-dimensional algebraic torus $T$ parametrising 
classes of polynomials with non-zero coefficients. 
A $K$-valued point of $T$ is given by an $(n\!-\!1)$-tuple 
\[\textstyle
b_1\!=\!\frac{a_2}{a_1^2},\;b_2\!=\!\frac{a_1a_3}{a_2^2},\;\ldots\:,\;
b_{k}\!=\!\frac{a_{k-1}a_{k+1}}{a_k^2},\;\ldots\:,\;
b_{n-2}\!=\!\frac{a_{n-3}a_{n-1}}{a_{n-2}^2},\;
b_{n-1}\!=\!\frac{a_{n-2}}{a_{n-1}^2}
\]
of elements $b_i\in K^*$. These expressions in the $a_i$ 
form a set of generators of $\mu_n$-invariants in the coordinate 
ring of the torus $(\G_m)^{n-1}\subset\A^{n-1}$.
Equivalently, we can express a $K$-valued point of $T$ as a 
collection $(a_1,\ldots,a_{n-1},b_1,\ldots,b_{n-1})\in(K^*)^{2n-2}$ 
up to the equivalence 
\[\textstyle
(a_1,\ldots,a_{n-1},b_1,\ldots,b_{n-1})\sim
(\kappa_1a_1,\ldots,\kappa_{n-1}a_{n-1},\lambda_1b_1,\ldots,
\lambda_{n-1}b_{n-1})
\]
for $\kappa_i\in K^*$ and $\lambda_i=\kappa_i^2/
(\kappa_{i-1}\kappa_{i+1})$, putting $\kappa_0=\kappa_n=1$.
Then $(1,\ldots,1,b_1,\ldots,b_{n-1})\linebreak\sim
(a_1,\ldots,a_{n-1},1,\ldots,1)$ 
if $a_i,b_i$ satisfy the above equations.

\medskip

Allowing certain subsets of the coordinates $a_i,b_i$ to become 
zero, we obtain a toric tame stack which compactifies the 
moduli stack $[\A^{n-1}/\mu_n]$ of irreducible chains. 
Its definition and properties are contained in section
\ref{sec:toricorbifoldsAn} and we will show in section 
\ref{sec:L_n-toricstack} that it coincides with the moduli stack 
$\overline{\mathcal L}_n$. In particular, degenerating some of 
the $b_i$ to zero can be interpreted as degenerating 
$\P^1$ to a reducible chain of $\P^1$.
These additional divisors arise as in diagram (\ref{eq:concat}).

\begin{ex}\label{ex:L_2}
We illustrate some results of this paper in the case $n=2$ 
(see also examples \ref{ex:stackyfanY(A_1)},
\ref{ex:L_2-Y(A_1)}, \ref{ex:L_2-L_2}).
There is a natural embedding of degree-$2$-pointed chains 
$(C,s_-,s_+,S)$ into $\P^2$ determined by the line bundle
$\O_C(S)$ (for arbitrary $n$ see section \ref{sec:L_n-toricstack}): 
in $\P^2=\P(H^0(C,\O_C(S)))$ we can choose homogeneous coordinates 
$y_0,y_1,y_2$ such that $C$ is given by an equation $y_0y_2=b_1y_1^2$, 
the subscheme $S\subset C$ by an additional equation $y_0+a_1y_1+y_2=0$, 
and the two sections $s_-,s_+$ are $(1\!:\!0\!:\!0),(0\!:\!0\!:\!1)$.
Over an algebraically closed field $K$, data 
$(a_1,b_1)\in K^2\setminus\{(0,0)\}$ up to the equivalence 
$(a_1,b_1)\sim(\kappa_1a_1,\lambda_1b_1)$ for 
$\kappa_1^2=\lambda_1\in K^*$ correspond to isomorphism classes 
of degree-$2$-pointed chains over $K$.

The moduli stack $\overline{\mathcal L}_2$ is isomorphic to the 
quotient stack $[(\A^2\setminus\{(0,0)\})/\G_m]$ for the 
operation with weights $(1,2)$, i.e.\ the weighted projective line 
$\P(1,2)$ (which coincides with the toric orbifold $\mathcal Y(A_{n-1})$ 
for $n=2$ defined in section \ref{sec:toricorbifoldsAn}).
The open substack parametrising irreducible
curves, the locus 
where $b_1\neq0$, is the quotient stack $[\A^1/\mu_2]$ with 
coordinate $a_1$ on $\A^1$. The open substack parametrising 
objects without isomorphisms, the locus where $a_1\!\neq\!0$,
is isomorphic to $\A^1$ with coordinate~$b_1$.

\noindent
\begin{picture}(150,65)(0,0)
\put(8,6){\makebox(0,0)[l]{\large$\overline{\mathcal L}_2$}}
\put(5,0)
{
\put(15,6){\line(1,0){120}}
\put(25,0){\makebox(0,0)[b]{\small$a_1=0$}}
\put(25,5){\line(0,1){2}}
\put(25.4,7.5){\makebox(0,0)[b]{\large$\curvearrowright$}}
\put(25.1,10.5){\makebox(0,0)[b]{\small$\mu_2$}}
\put(14,20){
\dottedline{0.8}(5,-6)(5,35)
\dottedline{0.8}(-5,0)(31,18)
\dottedline{0.8}(0,32)(31,12)
\put(3.4,14){\rotatebox{-90}{\makebox(0,0)[r]{\tiny$y_2=0$}}}
\put(18,9.8){\rotatebox{26}{\makebox(0,0)[r]{\tiny$y_0=0$}}}
\put(25,17.6){\rotatebox{-34}{\makebox(0,0)[r]{\tiny$y_1=0$}}}
\put(23,32){\makebox(0,0)[c]{\tiny$y_0\!+\!y_2\!=\!0$}}
\qbezier(4,35)(7,8)(31,17)
\qbezier(-5,-1)(3,2)(4,-6)
\put(31,20){\makebox(0,0)[r]{\small$C$}}
\drawline(-1,-5)(20,30)
\filltype{white}
\put(5,28.8){\circle*{1.2}}
\put(4,28){\makebox(0,0)[r]{\small$s_-$}}
\put(25.7,15.4){\circle*{1.2}}
\put(26,14){\makebox(0,0)[t]{\small$s_+$}}
\filltype{black}
\put(1.44,-1){\circle*{1.2}}
\put(12.25,17){\circle*{1.2}}
\put(2,8){\makebox(0,0)[c]{\small$S$}}
\drawline(11,16)(3.6,9.2)
\drawline(1.5,0.2)(2,6)
}
\put(75,0){\makebox(0,0)[b]{\small$a_1,b_1\neq 0$}}
\put(62,20){
\dottedline{0.8}(5,-6)(5,35)
\dottedline{0.8}(-5,0)(31,18)
\dottedline{0.8}(0,32)(31,12)
\put(3.4,14){\rotatebox{-90}{\makebox(0,0)[r]{\tiny$y_2=0$}}}
\put(18,9.8){\rotatebox{26}{\makebox(0,0)[r]{\tiny$y_0=0$}}}
\put(25,17.6){\rotatebox{-34}{\makebox(0,0)[r]{\tiny$y_1=0$}}}
\put(21,32){\makebox(0,0)[c]{\tiny$y_0\!+\!a_1y_1\!+\!y_2\!=\!0$}}
\qbezier(4.5,35)(5,5)(31,17.5)
\qbezier(-5,-1)(3,2)(4,-6)
\put(31,20){\makebox(0,0)[r]{\small$C$}}
\drawline(-3,-4)(18,30)
\filltype{white}
\put(5,28.8){\circle*{1.2}}
\put(4,28){\makebox(0,0)[r]{\small$s_-$}}
\put(25.7,15.4){\circle*{1.2}}
\put(26,14){\makebox(0,0)[t]{\small$s_+$}}
\filltype{black}
\put(0,8){\makebox(0,0)[c]{\small$S$}}
\put(10,16.7){\circle*{1.2}}
\put(-0.65,-0.3){\circle*{1.2}}
\drawline(1.6,9.2)(8.5,16)
\drawline(0,6)(-0.6,1)
}
\put(125,0){\makebox(0,0)[b]{\small$b_1=0$}}
\put(125,5){\line(0,1){2}}
\put(110,20){
\dottedline{0.8}(5,-6)(5,35)
\dottedline{0.8}(-5,0)(31,18)
\dottedline{0.8}(0,32)(31,12)
\put(3.4,14){\rotatebox{-90}{\makebox(0,0)[r]{\tiny$y_2=0$}}}
\put(20,10.8){\rotatebox{26}{\makebox(0,0)[r]{\tiny$y_0=0$}}}
\put(24,18.2){\rotatebox{-34}{\makebox(0,0)[r]{\tiny$y_1=0$}}}
\put(21,32){\makebox(0,0)[c]{\tiny$y_0\!+\!a_1y_1\!+\!y_2\!=\!0$}}
\drawline(5,-6)(5,35)
\drawline(-5,0)(31,18)
\put(6,3){\makebox(0,0)[l]{\small$C$}}
\drawline(-3,-4)(18,30)
\filltype{white}
\put(5,28.8){\circle*{1.2}}
\put(4,28){\makebox(0,0)[r]{\small$s_-$}}
\put(25.7,15.4){\circle*{1.2}}
\put(26,14){\makebox(0,0)[t]{\small$s_+$}}
\filltype{black}
\put(1.5,3.2){\circle*{1.2}}
\put(5,8.8){\circle*{1.2}}
\put(2.8,8){\makebox(0,0)[r]{\small$S$}}
}
}
\end{picture}
\bigskip

The morphism $\overline{L}_2\to\overline{\mathcal L}_2$ is 
faithfully flat and finite of degree $2$.
We introduce homogeneous coordinates $z_-,z_+$ of $\overline{L}_2=\P^1$ 
that measure the position of one of the marked points of a $2$-pointed
chain with respect to the other marked point at $(1\!:\!1)$ of its 
component isomorphic to $\P^1$, such that the two points 
$(0\!:\!1),(1\!:\!0)$ correspond to reducible chains (cf.\ \cite{BB11a}). 
Then the point $(1\!:\minus1)$ corresponds to a $2$-pointed curve 
$\P^1$ with marked points $(1\!:\!1)$, $(1\!:\minus1)$ giving 
rise to a degree-$2$-pointed curve with automorphism group $\mu_2$.
The point $(1\!:\!1)$ corresponds to the point of $\overline{\mathcal L}_2$
with nonreduced $S$, the morphism is ramified here and \'etale elsewhere.

\medskip
\bigskip

\noindent
\begin{picture}(150,35)
\put(8,25){\makebox(0,0)[l]{$\overline{L}_2=\P^1$}}
\put(8,5){\makebox(0,0)[l]{$\overline{\mathcal L}_2=\P(1,2)$}}

\put(25,0){

\curve(
110,33, 90,33, 70,32.5, 65,32, 60,31, 58,30, 56,28,
55,25,
56,22, 58,20, 60,19, 65,18, 70,17.5, 90,17, 110,17)

\curve(
40,33, 45,32.5, 50,31, 52,30, 54,28, 
55,25,
54,22, 52,20, 50,19, 45,17.5, 40,17)

\curve(
40,17, 35,17, 30,18, 27,19.5, 25,21.5,
24,25,
25,28.5,27,30.5,30,32,35,33, 40,33
)

\filltype{black}
\put(55,25){\circle*{1.2}}
\put(53,25){\makebox(0,0)[r]{\tiny$(1\!:\!1)$}}

\put(24,25){\circle*{1.2}}
\put(26,25){\makebox(0,0)[l]{\tiny$(1\!:\minus1)$}}
\put(95,17){\circle*{1.2}}
\put(95,19){\makebox(0,0)[b]{\tiny$(1\!:\!0)$}}
\put(95,33){\circle*{1.2}}
\put(95,31){\makebox(0,0)[t]{\tiny$(0\!:\!1)$}}

\put(23,5){\line(1,0){87}}
\put(24,5){\circle*{1.2}}
\put(32,7){\makebox(0,0)[b]{\tiny$a_1\!=\!0$}}
\put(24,3){\makebox(0,0)[t]{\tiny\it nontrivial}}
\put(24,0){\makebox(0,0)[t]{\tiny\it automorphism}}
\put(24.4,6){\makebox(0,0)[b]{\large$\curvearrowright$}}
\put(24.1,9){\makebox(0,0)[b]{\small$\mu_2$}}

\put(55,5){\circle*{1.2}}
\put(55,3){\makebox(0,0)[t]{\tiny\it nonreduced $S$}}
\put(55,0){\makebox(0,0)[t]{\tiny\it ramification}}

\put(95,5){\circle*{1.2}}
\put(95,7){\makebox(0,0)[b]{\tiny$b_1\!=\!0$}}
\put(95,3){\makebox(0,0)[t]{\tiny\it reducible chain}}
}
\end{picture}

\bigskip
\medskip

\noindent
We will see in section \ref{sec:LM} that the morphism 
$\overline{L}_2\to\overline{\mathcal L}_2$ is given as 
\[\textstyle
(z_-\!+z_+:z_-z_+):\;\P^1\;\to\;\P(1,2).
\]
\end{ex}

\bigskip
\section{The toric orbifolds $\mathcal Y(A_n)$}
\label{sec:toricorbifoldsAn}
\medskip

In this section we will consider a family of toric orbifolds associated 
to the Cartan matrices of root systems of type $A$, but also comment on
some generalities on toric stacks.

\medskip

We use the definitions and notations of \cite{BCS05}. A stacky fan 
$\mathbf\Sigma=(N,\Sigma,\beta)$ defining a toric orbifold has the 
property that the abelian group $N$ is free; it consists of the 
data of a simplicial fan $\Sigma$ in the lattice $N$ and elements 
$n_\rho\in\rho\,\cap\,N$ for the one-dimensional cones $\rho\in\Sigma(1)$. 
Here we assume them to span the ambient space $N_\Q$. 
The homomorphism $\beta\colon\Z^{\Sigma(1)}\to N$ maps the elements 
of the standard basis to the elements $n_\rho$.
Dually we have the exact sequence 
\[
0\longrightarrow M\!=\!\Hom_\Z(N,\Z)
\stackrel{\beta^*}{\longrightarrow}\Z^{\Sigma(1)}
\stackrel{\beta^\vee}{\longrightarrow}\DG(\beta)\longrightarrow 0
\] 
giving rise, as sequence of character groups, to the exact sequence 
of diagonalisable group schemes
\[
1\longrightarrow G\longrightarrow T_{\Sigma(1)}\longrightarrow
T_M\longrightarrow 1.
\]
The toric orbifold $\mathcal X_{\mathbf\Sigma}$ is defined as the 
quotient stack $[U/G]$ with $U\subseteq\A^{\Sigma(1)}$ the open 
subset defined by the information which of the one-dimensional cones 
form higher dimensional cones of $\Sigma$. The constructions make sense
over the integers, however, working with $G$-torsors, in general one 
may have to choose an appropriate Grothendieck topology on the base 
category possibly finer than the \'etale topology 
(see also remark \ref{rem:topology-basecat}).
The resulting algebraic stacks $\mathcal X_{\mathbf\Sigma}$ are 
tame stacks in the sense of \cite{AOV08}. 

\begin{defi}
We define the toric orbifold $\mathcal Y(A_n)$ associated to the 
Cartan matrix of the root system $A_n$ in terms of the stacky fan 
$\mathbf\Upsilon(A_n)=(N,\Upsilon(A_n),\beta)$: let $N=\Z^n$ and let 
the linear map $\beta\colon\Z^{2n}\to N$ be given by the 
$n\times 2n$ matrix
\[
\left(\;
\begin{matrix}
\minus2&1&0&\cdots\quad&\;1\;&\;0\;&\;0\;&\cdots\\
1&\minus2&1&\cdots\quad&0&1&0&\cdots\\
0&1&\minus2&\cdots\quad&0&0&1&\cdots\\
\vdots&\vdots&\vdots&\ddots\quad&\vdots&\vdots&\vdots&\ddots\\
\end{matrix}
\quad\right)
\]
i.e.\ the matrix consisting of two blocks $(\minus C(A_n)\;I_n)$, 
where $C(A_n)$ is the Cartan matrix of the root system $A_n$ 
and $I_n$ the $n\times n$ identity matrix.
The fan $\Upsilon(A_n)$ has the $2n$ one-dimensional cones
$\rho_1,\ldots,\rho_n,\tau_1,\ldots,\tau_n$ generated by the columns 
of the above matrix.
A subset of one-dimensional cones generates a higher dimensional 
cone  of $\Upsilon(A_n)$ if it does not contain one of the sets
$\{\rho_1,\tau_1\}$, $\ldots\:,$ $\{\rho_n,\tau_n\}$.
This defines a fan containing $2^n$ $n$-dimensional 
cones $\sigma_I$ generated by sets 
$\{\rho_i:i\not\in I\}\cup\{\tau_i:i\in I\}$
for subsets $I\subseteq\{1,\ldots,n\}$.
\end{defi}

For the stacky fan $\mathbf\Upsilon(A_n)$ the map 
$\beta\colon\Z^{2n}\to N$ gives rise to the exact sequence of lattices
\[
0\longrightarrow M\!\cong\!\Z^n
\stackrel{\left(\begin{smallmatrix}\minus\,C\\I_n\end{smallmatrix}\right)}
{\longrightarrow}\Z^{2n}
\stackrel{\left(\begin{smallmatrix}I_n\;C\\\end{smallmatrix}\right)}
{\longrightarrow}\DG(\beta)\!\cong\!\Z^{n}\longrightarrow 0
\]
where $C=C(A_n)^\top=C(A_n)$ is (the transpose of) the Cartan matrix, 
and the exact sequence of tori
\[
1\longrightarrow G\!\cong\!(\G_m)^n\longrightarrow(\G_m)^{2n}
\longrightarrow T_N\!\cong\!(\G_m)^n\longrightarrow 1
\]
where $G\!\cong\!(\G_m)^n\longrightarrow(\G_m)^{2n}$,
$(\kappa_1,\ldots,\kappa_n)\mapsto(\kappa_1,\ldots,\kappa_n,
\lambda_1,\ldots,\lambda_n)$ with
$\lambda_i=\kappa_i^2/(\kappa_{i-1}\kappa_{i+1})$
setting $\kappa_0=\kappa_{n+1}=1$ (cf.\ last section).
Note that the toric orbifold $\mathcal Y(A_n)$ arises as 
quotient $[U/G]$ by a torus $G$.

\pagebreak
\begin{ex}\label{ex:stackyfanY(A_1)}
The toric orbifold $\mathcal Y(A_1)$ is isomorphic to the weighted 
projective line $\P(1,2)$: we have the matrix $(\,\minus2\;\:1\,)$ 
and the stacky fan looks as follows:

\noindent
\begin{picture}(150,14)(0,0)
\put(8,6){\makebox(0,0)[l]{\large$\mathbf\Upsilon(A_1)$}}

\put(5,0){
\dottedline{1}(30,6)(120,6)
\put(75,9){\makebox(0,0)[b]{$0$}}
\put(75,6){\vector(1,0){10}}\put(85,9){\makebox(0,0)[b]{$\tau_1$}}
\put(75,6){\vector(-1,0){20}}\put(55,9){\makebox(0,0)[b]{$\rho_1$}}

\drawline(75,5)(75,7)
\drawline(65,5)(65,7)\drawline(55,5)(55,7)\drawline(45,5)(45,7)
\drawline(35,5)(35,7)
\drawline(85,5)(85,7)\drawline(95,5)(95,7)\drawline(105,5)(105,7)
\drawline(115,5)(115,7)
}
\end{picture}
\end{ex}

\begin{ex}\label{ex:stackyfanY(A_2)}
The toric orbifold $\mathcal Y(A_2)$ arises from the matrix
$
\left(\;
\begin{matrix}
\minus2&1&1&0\\
1&\minus2&0&1\:\\
\end{matrix}\;
\right)
$. 
We have the stacky fan 

\noindent
\begin{picture}(150,80)(0,0)
\put(8,40){\makebox(0,0)[l]{\large$\mathbf\Upsilon(A_2)$}}

\put(5,0){
\dottedline{1}(25,40)(125,40)
\dottedline{3}(25,55)(125,55)
\dottedline{3}(25,70)(125,70)
\dottedline{3}(25,25)(125,25)
\dottedline{3}(25,10)(125,10)

\dottedline{1}(75,5)(75,75)
\dottedline{3}(90,5)(90,75)
\dottedline{3}(105,5)(105,75)
\dottedline{3}(120,5)(120,75)
\dottedline{3}(60,5)(60,75)
\dottedline{3}(45,5)(45,75)
\dottedline{3}(30,5)(30,75)

\put(75,40){\vector(1,0){15}}\put(93,41){\makebox(0,0)[b]{$\tau_1$}}
\put(75,40){\vector(0,1){15}}\put(76,58){\makebox(0,0)[l]{$\tau_2$}}
\put(75,40){\vector(-2,1){30}}\put(47,58){\makebox(0,0)[l]{$\rho_1$}}
\put(75,40){\vector(1,-2){15}}\put(93,12){\makebox(0,0)[b]{$\rho_2$}}

\put(84,49){\makebox(0,0)[c]{$\sigma_{\{1,2\}}$}}
\put(68,49){\makebox(0,0)[c]{$\sigma_{\{2\}}$}}
\put(85,31){\makebox(0,0)[c]{$\sigma_{\{1\}}$}}
\put(67,31){\makebox(0,0)[c]{$\sigma_{\emptyset}$}}
}
\end{picture}
\end{ex}

\vspace{-5mm}

The description of the functor of a smooth toric variety given by 
Cox \cite{Co95b} in terms of collections of line bundles with sections
determined by the combinatorial data has been extended to toric
Deligne-Mumford stacks by Iwanari \cite{Iw07} and Perroni 
\cite{Pe08}. For the stacky fan $\mathbf\Upsilon(A_n)$ we have:

\begin{defi}
A $\mathbf\Upsilon(A_n)$-collection on a scheme $Y$ is a collection 
\[
\mathscr L=((\mathscr L_{\rho_i},a_i)_{i=1,\ldots,n},
(\mathscr L_{\tau_i},b_i)_{i=1,\ldots,n},
(c_i)_{i=1,\ldots,n})
\]
where $(\mathscr L_{\rho_i},a_i)$ and $(\mathscr L_{\tau_i},b_i)$ 
are line bundles with a section and 
\[
\begin{array}{l}
c_1\colon\mathscr L_{\tau_1}\!\otimes\!\mathscr L_{\rho_1}^{\otimes\,\minus2}
\!\otimes\!\mathscr L_{\rho_2}\to\O_Y,\;
c_2\colon\mathscr L_{\tau_2}\!\otimes\!\mathscr L_{\rho_1}\!\otimes\!
\mathscr L_{\rho_2}^{\otimes\,\minus2}\!\otimes\!
\mathscr L_{\rho_3}\to\O_Y,\;\ldots\ldots\\
c_{n-1}\colon\mathscr L_{\tau_{n-1}}
\!\!\otimes\!\mathscr L_{\rho_{n-2}}\!\!\otimes\!
\mathscr L_{\rho_{n-1}}^{\otimes\,\minus2}\!\otimes\!
\mathscr L_{\rho_n}\!\to\O_Y,\;
c_n\colon\mathscr L_{\tau_n}\!\!\otimes\!\mathscr L_{\rho_{n-1}}
\!\!\otimes\!\mathscr L_{\rho_n}^{\otimes\,\minus2}\to\O_Y\\
\end{array}
\]
are isomorphisms. These data are subject to the nondegeneracy 
condition that for every point $y\in Y$ and $i=1,\ldots,n$ not both 
$a_i(y)=0$ and $b_i(y)=0$.  

A morphism $\mathscr L'\to\mathscr L$ between two 
$\mathbf\Upsilon(A_n)$-collections $\mathscr L=
((\mathscr L_{\rho_i},a_i)_i,(\mathscr L_{\tau_i},b_i)_i,\linebreak
(c_i)_i)$ on $Y$ and $\mathscr L'=((\mathscr L_{\rho_i}',a_i')_i,
(\mathscr L_{\tau_i}',b_i')_i,(c_i')_i)$ on $Y'$ 
over a morphism of schemes $f\colon Y'\to Y$ is a collection
$((r_i)_{i=1,\ldots,n},(t_i)_{i=1,\ldots,n})$ consisting of 
isomorphisms of line bundles 
$r_i\colon f^*\mathscr L_{\rho_i}\to\mathscr L_{\rho_i}'$, 
$t_i\colon f^*\mathscr L_{\tau_i}\to\mathscr L_{\tau_i}'$ 
such that $r_i(f^*a_i)=a_i'$, $t_i(f^*b_i)=b_i'$ and the diagrams 

\pagebreak

\vspace*{-10mm}

\begin{equation}\label{eq:morphcoll}
\begin{array}{cccc}
f^*\!\mathscr L_{\tau_i}\!\otimes\!f^*\!\mathscr L_{\rho_{i-1}}
\!\otimes\!f^*\!\mathscr L_{\rho_i}^{\otimes\,\minus2}\!\otimes\!
f^*\!\mathscr L_{\rho_{i+1}}&
\stackrel{f^*c_i}{\longrightarrow}&f^*\O_Y\\[1mm]
\downarrow\quad\,&&\downarrow\;\\[-1mm]
\;\:\mathscr L_{\tau_i}'\otimes\mathscr L_{\rho_{i-1}}'
\otimes{\mathscr L_{\rho_i}'}^{\otimes\,\minus2}\otimes
\mathscr L_{\rho_{i+1}}'
&\stackrel{c_i'}{\longrightarrow}&\;\O_{Y'}\\
\end{array}
\end{equation}
($i=1,\ldots,n$; for $i=1,n$ omit the factors indexed by 
$\rho_0,\rho_{n+1}$) commute.

We denote the fibred category of $\mathbf\Upsilon(A_n)$-collections
over the category of schemes by $\mathcal C_{\mathbf\Upsilon(A_n)}$.
It comes with the cleavage given by pull-back of line bundles: for 
$f\colon Y'\to Y$ we have an arrow 
$f^*\mathscr L\to\mathscr L$ in $\mathcal C_{\mathbf\Upsilon(A_n)}$. 
The definition describes a morphism $\mathscr L'\to\mathscr L$ in 
$\mathcal C_{\mathbf\Upsilon(A_n)}$ as composition
of a morphism $\mathscr L'\to f^*\mathscr L$ over $\id_{Y'}$ 
with $f^*\mathscr L\to\mathscr L$ over $f\colon Y'\to Y$.
\end{defi}

\begin{rem}\label{rem:morphcoll}
A morphism of $\mathbf\Upsilon(A_n)$-collections $\mathscr L'\to\mathscr L$ 
over $\id_Y$ Zariski-locally for some open $Y'\subseteq Y$, a after fixing 
isomorphisms of the line bundles with the structure sheaf such that the 
isomorphisms $c_i$ become $\id_{\O_{Y'}}$, corresponds to a collection 
$\kappa_1,\ldots,\kappa_n,\lambda_1,\ldots,\lambda_n\in\O_{Y'}^*(Y')$
such that the isomorphisms 
$\O_{Y'}\!\cong\!\mathscr L_{\rho_i}|_{Y'}\to
\mathscr L_{\rho_i}'|_{Y'}\!\cong\!\O_{Y'}$ 
and 
$\O_{Y'}\!\cong\!\mathscr L_{\tau_i}|_{Y'}
\to\mathscr L_{\tau_i}'|_{Y'}\!\cong\!\O_{Y'}$ 
are given by multiplication by $\kappa_i$ and $\lambda_i$.
The condition expressed in diagram (\ref{eq:morphcoll}) translates 
into the equations $\lambda_i=\kappa_i^2/(\kappa_{i-1}\kappa_{i+1})$, 
putting $\kappa_0=\kappa_{n+1}=1$.
\end{rem}

The category of $\mathbf\Sigma$-collections $\mathcal C_{\mathbf\Sigma}$
for a stacky fan $\mathbf\Sigma$ is a category fibred in groupoids (CFG)  
over the base category of schemes. By descent theory for quasi-coherent 
sheaves the CFG $\mathcal C_{\mathbf\Sigma}$ forms a stack in the fpqc 
topology, see \cite[Thm.\ 4.23]{Vi05}.
By Iwanari \cite[Thm.\ 1.4]{Iw07} (for toric orbifolds) and 
Perroni \cite[Thm.\ 2.6]{Pe08} (for toric Deligne-Mumford stacks) 
over fields of characteristic $0$, working with the \'etale topology, 
there is an isomorphism of stacks 
$\mathcal X_{\mathbf\Sigma}\cong\mathcal C_{\mathbf\Sigma}$.
Also over more general base schemes we have an isomorphism
$\mathcal X_{\mathbf\Sigma}\cong\mathcal C_{\mathbf\Sigma}$; 
we make some comments on this issue.

\begin{con}\label{con:toricstack-coll}
Explicitely, one can construct an isomorphism 
$\mathcal X_{\mathbf\Sigma}\cong\mathcal C_{\mathbf\Sigma}$
as follows, here for simplicity we stick to the orbifold case and 
assume that the one-dimensional cones generate the ambient space $N_\Q$. 

Note that we have a natural 
$G$-equivariant $\mathbf\Sigma$-collection 
$((\O_U\otimes V_\rho,x_\rho)_\rho,(\id)_m)$ on $U$, where 
$V_\rho$ is the one-dimensional representation such that
the coordinate $x_\rho$ of $U\subset\A^{\Sigma(1)}$ is an invariant 
section of $\O_U\otimes V_\rho$ (in the case of smooth toric varieties 
as considered in \cite{Co95a} this $G$-equivariant collection descents 
to the universal collection on the toric variety).

Starting with an object of $\mathcal X_{\mathbf\Sigma}$ over $Y$,
that is a $G$-torsor $p\colon E\to Y$ together with a $G$-equivariant
morphism $t\colon E\to U$, the pull-back
$t^*((\O_U\otimes V_\rho,x_\rho)_\rho,(\id)_m)$ is a $G$-equivariant
$\mathbf\Sigma$-collection on $E$ and gives rise to the 
$\mathbf\Sigma$-collection
$p_*^Gt^*((\O_U\otimes V_\rho,x_\rho)_\rho,(\id)_m)$ on $Y$
(the functor $p_*^G$ takes the $G$-invariant part of the push-forward).

On the other hand, for a given $\mathbf\Sigma$-collection
$((\mathscr L_\rho,u_\rho)_\rho,(c_m)_m)$ on a scheme $Y$ 
we construct a $G$-torsor with a $G$-equivariant morphism to $U$.
Let $\underline{E}$ be the contravariant functor on the category 
of $Y$-schemes
\[
\underline{E}\colon(q\colon Y'\to Y)\:\mapsto\:
\left\{
\begin{array}{l}
\textit{$\mathbf\Sigma$-collections
$((\O_{Y'}\otimes V_\rho,u'_\rho),(\id)_m)$ on $Y'$}\\
\textit{with an isomorphism of $\mathbf\Sigma$-collections}\\
q^*((\mathscr L_\rho,u_\rho)_\rho,(c_m)_m)\cong
((\O_{Y'}\otimes V_\rho,u'_\rho)_\rho,(\id)_m)\\
\end{array}
\right\}
\]
where $V_\rho$, the one-dimensional representation as above,
is used to define an operation of $G$ on this functor.
Then one can show that the functor $\underline{E}$ with this 
$G$-action is represented by a $G$-torsor $p\colon E\to Y$ 
together with a universal isomorphism 
$p^*((\mathscr L_\rho,u_\rho)_\rho,(c_m)_m)\cong
((\O_E\otimes V_\rho,u^E_\rho)_\rho,(\id)_m)$ 
of $G$-equivariant $\mathbf\Sigma$-collections, provided that 
the original $\mathbf\Sigma$-collection is locally trivial in 
the sense that there is a covering $f\colon Y'\to Y$ such that 
$f^*((\mathscr L_\rho,u_\rho)_\rho,(c_m)_m)$ is isomorphic to 
a collection of the form $((\O_{Y'},u'_\rho)_\rho,(\id)_m)$; 
collections of this form correspond to trivial $G$-torsors.
We will assume that the topology on the base category is such that
any $\mathbf\Sigma$-collection has this property, see also 
the following remarks.
The sections $(u^E_\rho)_\rho$ of the universal 
$\mathbf\Sigma$-collection on $E$ then define a 
$G$-equivariant morphism $E\to U\subset\A^{\Sigma(1)}$.

Making use of the fact that for a $G$-torsor $p\colon E\to Y$ 
we have the equivalence $\QCoh(Y)\leftrightarrow\QCoh^G(E)$ 
given by the functors $p^*$ and $p_*^G$, one can show that 
these constructions define functors
$\mathcal X_{\mathbf\Sigma}\leftrightarrow\mathcal C_{\mathbf\Sigma}$
whose compositions are isomorphic to the identity functors.
\end{con}

\begin{rem}
Zariski-locally we can interpret the construction of the $G$-torsor 
as the coboundary homomorphism $d$ in the exact sequence 
(see \cite[Ch.\ III, \S 3]{Gi})
\[
0\longrightarrow H^0(Y,G)\longrightarrow 
H^0(Y,T_{\Sigma(1)})\longrightarrow 
H^0(Y,T_M)\stackrel{d}{\longrightarrow}
H^1(Y,G)
\]
where elements of $H^1(Y,G)$ are isomorphism classes of $G$-torsors
over $Y$:
given a $\mathbf\Sigma$-collection $((\O_Y,u_\rho)_\rho,(c_m)_m)$ on $Y$,
the automorphisms $(c_m)_m$ of the structure sheaf can be interpreted as 
a morphism $Y\to T_M$ or section of $T_M\times Y\to Y$, and fitting in
the cartesian diagram
\[
\begin{array}{ccc}
E&\longrightarrow&T_{\Sigma(1)}\\
\downarrow&&\downarrow\\
Y&\longrightarrow&T_M\\
\end{array}
\]
we obtain a $G$-torsor $E\to Y$ which is trivial if and only if 
$(c_m)_m\in H^0(Y,T_M)$ comes from an element of $H^0(Y,T_{\Sigma(1)})$.
\end{rem}

\begin{rem}\label{rem:topology-basecat}
Working with $G$-torsors, we usually assume that the Grothendieck
topology on the base category is fine enough in the sense that we 
have the same $G$-torsors as we have with respect to the canonical 
topology.
We have seen that this assumption was necessary to derive the isomorphism
$\mathcal X_{\mathbf\Sigma}\cong\mathcal C_{\mathbf\Sigma}$:
whereas the notion of $G$-torsor depends on the topology, this is not 
the case for the notion of $\mathbf\Sigma$-collections.
For $\mathbf\Sigma$-collections we have the corresponding assumption 
that $\mathbf\Sigma$-collections are locally trivial with respect 
to the topology (in the sense of construction \ref{con:toricstack-coll}).
In characteristic $0$ this is always true for the \'etale 
topology. In general we may have to take a finer topology, 
for example the fppf topology.
\end{rem}

In the case of the stacky fan $\mathbf\Upsilon(A_n)$ the lattice $M$ 
is a direct summand of $\Z^{\Sigma(1)}$ and the group scheme $G$ a torus, 
so the following result also holds in weaker topologies like
\'etale or Zariski.

\enlargethispage{4mm}

\begin{cor} 
There is an isomorphism of stacks
$\mathcal Y(A_n)\:\cong\:\mathcal C_{\mathbf\Upsilon(A_n)}$.
\end{cor}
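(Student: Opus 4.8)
The plan is to deduce the isomorphism directly from the general equivalence $\mathcal X_{\mathbf\Sigma}\cong\mathcal C_{\mathbf\Sigma}$ produced in construction \ref{con:toricstack-coll}, specialised to $\mathbf\Sigma=\mathbf\Upsilon(A_n)$, for which $\mathcal X_{\mathbf\Upsilon(A_n)}=\mathcal Y(A_n)$ by definition. That construction already furnishes two functors: one sends a $G$-torsor $p\colon E\to Y$ with equivariant map $t\colon E\to U$ to $p_*^G t^*((\O_U\otimes V_\rho,x_\rho)_\rho,(\id)_m)$, and the other sends a collection to the $G$-torsor representing the functor $\underline E$. As stressed in remark \ref{rem:topology-basecat}, the only delicate point is the Grothendieck topology on the base: the passage from a collection to a torsor needs every collection to be \emph{locally trivial}, and the cohomological identification of torsors needs the notion of $G$-torsor to be insensitive to the topology. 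So the genuine content here is to check that \emph{both} conditions already hold in the Zariski (hence also étale) topology for this particular fan, which removes the characteristic-$0$ hypotheses of \cite{Iw07,Pe08}. Since $\mathcal C_{\mathbf\Upsilon(A_n)}$ and $[U/G]$ are both Zariski stacks, it suffices to carry this out over affine schemes.

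For the torsor side I would use that $G\cong(\G_m)^n$ is a split torus, i.e.\ a finite product of copies of $\G_m$. Each $\G_m$ is a special group in the sense of Serre (Hilbert's Theorem 90), and a product of special groups is special, so every $G$-torsor is already Zariski-locally trivial and its isomorphism class is computed identically by Zariski, étale, fppf and the canonical topology. For the collection side I would exploit that in the sequence $0\to M\cong\Z^n\to\Z^{2n}\to\DG(\beta)\cong\Z^n\to 0$ the inclusion is $\beta^*=\bigl(\begin{smallmatrix}\minus C\\ I_n\end{smallmatrix}\bigr)$, whose bottom block is the identity. Hence the projection $r=(0\ I_n)\colon\Z^{2n}\to\Z^n$ onto the $\tau$-coordinates satisfies $r\beta^*=I_n$ and splits $\beta^*$, so $M$ is a direct summand of $\Z^{\Sigma(1)}$.

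Given this splitting, local triviality of collections is transparent. Over an affine $Y$ on which all $\mathscr L_{\rho_i},\mathscr L_{\tau_i}$ are trivialised (possible Zariski-locally, as they are line bundles), a $\mathbf\Upsilon(A_n)$-collection is recorded by the $c_i$, which become units $c_i\in\O_Y^*(Y)$. Rescaling the chosen trivialisations by $\kappa_{\rho_i},\kappa_{\tau_i}\in\O_Y^*(Y)$ alters $c_i$ by the factor obtained from $\beta^*e_i$, namely $\kappa_{\tau_i}\kappa_{\rho_{i-1}}\kappa_{\rho_i}^{\minus2}\kappa_{\rho_{i+1}}$. Because the retraction is the projection onto the $\tau$-block, one may simply put $\kappa_{\rho_i}=1$ and $\kappa_{\tau_i}=c_i^{-1}$, which kills every $c_i$ at once and brings the collection into the trivial form $((\O_Y,u'_\rho)_\rho,(\id)_m)$, in agreement with remark \ref{rem:morphcoll}. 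Thus every collection is Zariski-locally trivial, and collections of this form correspond to trivial $G$-torsors.

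The main obstacle is conceptual rather than computational: one must be satisfied that the two functors of construction \ref{con:toricstack-coll} are well defined over the integers and assemble into an equivalence of stacks once the topology is pinned down, the two verifications above being exactly what lift the étale/characteristic-$0$ restriction to the Zariski setting. With both collections and $G$-torsors shown to be Zariski-locally trivial, the coboundary description following construction \ref{con:toricstack-coll} shows the two functors are mutually quasi-inverse, and we obtain $\mathcal Y(A_n)\cong\mathcal C_{\mathbf\Upsilon(A_n)}$ in the étale, and in fact in the Zariski, topology.
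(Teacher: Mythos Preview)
Your proposal is correct and matches the paper's argument exactly: the paper's entire justification is the sentence preceding the corollary, observing that for $\mathbf\Upsilon(A_n)$ the lattice $M$ is a direct summand of $\Z^{\Sigma(1)}$ and $G$ is a torus, so construction \ref{con:toricstack-coll} applies already in the Zariski or \'etale topology. You have simply unpacked both halves of that sentence---the Hilbert 90 argument for $G\cong(\G_m)^n$-torsors and the explicit splitting $r=(0\ I_n)$ showing $M$ is a summand---which is precisely what the paper leaves implicit.
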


In particular, a $K$-valued point of $\mathcal Y(A_n)$ corresponds 
to $(a_1,\ldots,a_n,b_1,\ldots,b_n)\in K^{2n}$ such that 
for any $i$ not both $a_i=0$ and $b_i=0$, up to the equivalence 
relation given by multiplication by a collection 
$(\kappa_1,\ldots,\kappa_n,\lambda_1,\ldots,\lambda_n)
\in(K^*)^{2n}$ 
as in remark \ref{rem:morphcoll}.

\pagebreak
\section{$\mathcal Y(A_{n-1})$ as moduli stack of degree-$n$-pointed chains}
\label{sec:L_n-toricstack}
\medskip

In this section we will prove the following theorem.

\begin{thm}\label{thm:modulistack-toricstack}
There is an isomorphism of stacks 
$\overline{\mathcal L}_n\cong\mathcal Y(A_{n-1})$.
\end{thm}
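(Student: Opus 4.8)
The plan is to factor the desired isomorphism through the stack of collections, using the corollary of the preceding section that identifies $\mathcal Y(A_{n-1})$ with $\mathcal C_{\mathbf\Upsilon(A_{n-1})}$. It then suffices to produce mutually quasi-inverse morphisms of fibred categories $\overline{\mathcal L}_n\rightleftarrows\mathcal C_{\mathbf\Upsilon(A_{n-1})}$ over the category of schemes. For the forward direction, starting from a stable degree-$n$-pointed chain $(\pi\colon C\to Y,s_-,s_+,S)$, I would first invoke proposition \ref{prop:embY} to embed $C$ into the $\P^n$-bundle $\P_Y(\pi_*\O_C(S))$, with the two poles mapping to the sections $(1\!:\!0\!:\!\cdots\!:\!0)$ and $(0\!:\!\cdots\!:\!0\!:\!1)$. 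The next step, generalizing example \ref{ex:L_2}, is to equip $\mathscr E=\pi_*\O_C(S)$ with a canonical decomposition $\mathscr E=\bigoplus_{j=0}^n\mathscr M_j$ into line bundles, where $\mathscr M_j$ is the graded piece of sections vanishing to order $j$ along $s_-$ and $n-j$ along $s_+$ (on geometric fibres this is the weight decomposition for the automorphism torus of remark \ref{rem:autom}). The homogeneous coordinate $y_j$ is then a generator of $\mathscr M_j$, the chain $C$ is cut out by the quadrics $y_{i-1}y_{i+1}-b_iy_i^2$, and $S\subset C$ by a linear form $\sum_{j=0}^n a_jy_j$ coming from the canonical section of $\O_C(S)$.

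Reading off the coefficients, the data $b_i$ and $a_i$ become sections of the line bundles $\mathscr L_{\tau_i}=\mathscr M_{i-1}\otimes\mathscr M_i^{\otimes\,\minus2}\otimes\mathscr M_{i+1}$ and $\mathscr L_{\rho_i}=\mathscr N\otimes\mathscr M_i^{-1}$, where $\mathscr N$ is the auxiliary line bundle in which all the terms $a_jy_j$ are required to lie. A direct computation in which $\mathscr N$ cancels then gives $\mathscr L_{\tau_i}\otimes\mathscr L_{\rho_{i-1}}\otimes\mathscr L_{\rho_i}^{\otimes\,\minus2}\otimes\mathscr L_{\rho_{i+1}}\cong\O_Y$, so these assemble into the isomorphisms $c_i$ of a $\mathbf\Upsilon(A_{n-1})$-collection. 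The hypothesis that $S$ avoids $s_-,s_+$ forces the extreme coefficients $a_0,a_n$ to be nowhere-vanishing, which trivializes $\mathscr M_0,\mathscr M_n$ and $\mathscr N$ compatibly and lets me normalize $a_0=a_n=1$ (so $\mathscr M_0=\mathscr M_n=\O_Y$); the conditions that $S$ is an effective Cartier divisor meeting every component yield the nondegeneracy condition that $a_i,b_i$ do not vanish simultaneously. Since every ingredient is formed by pushforward along $\pi$ and so commutes with base change, this is functorial in $Y$ and defines a morphism $\overline{\mathcal L}_n\to\mathcal C_{\mathbf\Upsilon(A_{n-1})}$.

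For the inverse, given a collection I would set $\mathscr M_0=\mathscr M_n=\O_Y$ and $\mathscr M_i=\mathscr L_{\rho_i}^{-1}$ for $0<i<n$, form the bundle $\P_Y(\bigoplus_j\mathscr M_j)$, and cut out $C$ and $S\subset C$ by the same quadratic and linear equations, now with $b_i,a_i$ as coefficients and the relations $c_i$ guaranteeing that these equations are well defined. One then checks that $\pi\colon C\to Y$ is flat and proper with geometric fibres chains of $\P^1$ (a node appearing exactly where $b_i=0$) and that $S$ is a relative Cartier divisor of degree $n$ meeting every component while avoiding the poles and nodes, so that $(C,s_-,s_+,S)$ is an object of $\overline{\mathcal L}_n$; here the nondegeneracy condition is precisely what produces stability. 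Comparing the two constructions shows they are quasi-inverse, and on morphisms the scalar description of remark \ref{rem:morphcoll}, with $\lambda_i=\kappa_i^2/(\kappa_{i-1}\kappa_{i+1})$, matches the automorphism torus of a chain from remark \ref{rem:autom}, so the correspondence is fully faithful.

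The main obstacle I anticipate is the family-theoretic globalization rather than the fibrewise picture, which is essentially example \ref{ex:L_2} applied to the rational normal curve. Concretely, I expect the delicate points to be: establishing the canonical splitting $\mathscr E=\bigoplus_j\mathscr M_j$ together with the stated defining equations over an arbitrary base and over $\Z$, where one cannot appeal directly to characteristic-zero weight-space arguments; and, in the converse direction, proving flatness of $C\to Y$ together with the claim that the subscheme $S$ defined by the linear form genuinely meets every component and stays away from the nodes. This last point is where the purely combinatorial nondegeneracy condition must be translated into the geometric stability requirements of definition \ref{def:L_n}.
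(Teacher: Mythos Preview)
Your proposal follows essentially the same route as the paper: factor through $\mathcal C_{\mathbf\Upsilon(A_{n-1})}$, build the collection from the embedding $C\hookrightarrow\P_Y(\pi_*\O_C(S))$ together with the splitting of $\pi_*\O_C(S)$ (proposition~\ref{prop:embY}), and invert by cutting out $C$ and $S$ in $\P^n_Y$ from the $a_i,b_i$. The paper streamlines your intrinsic formulation by restricting from the start to affine $Y$ (legitimate since both sides are fpqc stacks), where the summands $\mathscr M_j$ are already trivial and the $a_i,b_i$ become honest functions; this also dissolves your auxiliary bundle $\mathscr N$, which as stated is not quite right --- the canonical section of $\O_C(S)$ pushes forward to the rank-$(n{+}1)$ bundle $\bigoplus_j\mathscr M_j$, not to a line bundle, and its $j$-th component already lies in $\mathscr M_j$.
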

 
We will relate families of pointed chains to 
$\mathbf\Upsilon(A_{n-1})$-collections and prove an 
equivalence of fibred categories
$\overline{\mathcal L}_n\cong\mathcal C_{\mathbf\Upsilon(A_{n-1})}$.

\medskip

Let $(C,s_-,s_+,S)$ be a degree-$n$-pointed chain of $\P^1$
over a field $K$. We look at the closed embedding 
$C\to\P_K(H^0(C,\O_C(S)))\cong\P^n_K$ determined by $\O_C(S)$.

\medskip

First assume that $C$ is irreducible, that is $C\!\cong\!\P^1_K$.
The vector space $H^0(C,\O_C(S))$ is $(n+1)$-dimensional and we 
have a basis $y_0,\ldots,y_n$ such that the divisor of $y_i$ 
satisfies $\divis(y_i)=is_-+(n-i)s_+$.
The ideal sheaf $\mathscr I=\O_C(-S)\to\O_C$ defining $S$ is 
a line bundle. Tensored with $\O_C(S)$ we have an inclusion
$\O_C\to\O_C(S)$ with cokernel $\O_S$, and the image of the 
$1$-section of $\O_C$ is a global section 
$\sum_{i=0}^na_iy_i\in H^0(C,\O_C(S))$.
The subscheme $S\subset C$ is given by the equation
$\sum_{i=0}^na_iy_i=0$, where $a_0,a_n\neq0$ as $S$ does not 
meet $s_-,s_+$.
We can choose the basis $y_0,\ldots,y_n$ such that $a_0=a_n=1$.

The embedding defined by $\O_C(S)$, the $n$-fold Veronese embedding 
or $n$-uple embedding, gives an isomorphism of $C$ onto the subscheme 
in $\P^n_K$ determined by the equations
\[
y_iy_{j+1}=b_{i+1}\cdots b_jy_{i+1}y_j
\]
for $0\leq i<j<n$ and certain numbers $b_1,\ldots,b_{n-1}\in K^*$.
These equations express the condition that the rank of the matrix
\[
\left(
\begin{matrix}
y_0&b_1y_1&\ldots&b_1\cdots b_{n-1}y_{n-1}\\
y_1&y_2&\ldots&y_n\\
\end{matrix}
\right)
\]
is less than $2$.
The subscheme $S$ in the embedded curve is given by the additional 
linear equation 
\[y_n\;+\;a_{n-1}y_{n-1}\;+\;\ldots\;+\;a_1y_1\;+\;y_0\;=\;0.\]

Similarly, we have a natural embedding of reducible degree-$n$-pointed 
chains of $\P^1$ into $\P^n_K$.

\begin{prop}\label{prop:embK}
Let $(C,s_-,s_+,S)$ be a degree-$n$-pointed chain of $\P^1$ 
over a field $K$. It decomposes into irreducible components 
$C_1,\ldots,C_m\cong\P^1_K$ with poles $(p_1^-,p_1^+),\ldots,$
$(p_m^-,p_m^+)$ such that $s_-=p_1^-$, $s_+=p_m^+$ and $C_i$ 
intersects $C_{i+1}$ in $p_i^+=p_{i+1}^-$.
Let $n_1,\ldots,n_m$ be the degrees of $S$ on the components 
$C_1,\ldots,C_m$ and $N_k=\sum_{i=1}^kn_i$.
Then there is a basis $y_0,\ldots,y_n$ of $H^0(C,\O_C(S))$
characterised up to nonzero scalars by the following conditions:
$y_i$ is nonzero only on the components $C_k$ satisfying 
$N_{k-1}\leq i\leq N_k$ and in this case 
$\divis(y_i)|_{C_k}=(i-N_{k-1})p_k^-+(N_k-i)p_k^+$. 
We scale $y_0,y_n$ such that the image of the $1$-section 
under the inclusion $\O_C\to\O_C(S)$ is
$\sum_{i=0}^na_iy_i\in H^0(C,\O_C(S))$ with $a_0=a_n=1$,
that is, $S$ is given by an equation
\begin{equation}\label{eq:embS}
y_n\;+\;a_{n-1}y_{n-1}\;+\;\ldots\;+\;a_1y_1\;+\;y_0\;=\;0
\end{equation}
for some $a_1,\ldots,a_{n-1}\in K$.
These sections $y_0,\ldots,y_n$ satisfy the equations
\begin{equation}\label{eq:embC}
y_iy_{j+1}=b_{i+1}\cdots b_jy_{i+1}y_j
\end{equation}
for  $0\leq i<j<n$ and certain numbers $b_1,\ldots,b_{n-1}\in K$ 
such that $b_j=0$ exactly if $j\in\{N_1,\ldots,N_{m-1}\}$. 

The curve $C$ embeds into $\P_K(H^0(C,\O_C(S)))$, the image being 
the subscheme defined by the equations {\rm (\ref{eq:embC})}.
The subscheme $S$ of the embedded curve is given by the additional
equation {\rm (\ref{eq:embS})}. The sections $s_-,s_+$ are  
$(1\!:\!0\!:\ldots:\!0)$, $(0\!:\ldots:\!0\!:\!1)$.

The numbers $a_1,\ldots,a_{n-1}$ and $b_1,\ldots,b_{n-1}$ in
{\rm (\ref{eq:embS})} and {\rm (\ref{eq:embC})} have the property
that not both $a_j=0$ and $b_j=0$.
\end{prop}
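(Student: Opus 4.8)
The plan is to construct the basis $y_0,\ldots,y_n$ explicitly and read off the equations by reducing to the irreducible case component by component. First I would analyse the line bundle $\O_C(S)$ on the reducible chain: since $S$ has degree $n_k$ on the component $C_k\cong\P^1_K$, the restriction $\O_C(S)|_{C_k}$ has degree $n_k$, and I would use the exact sequences relating sections on $C$ to sections on the components glued along the intersection points $p_k^+=p_{k+1}^-$ to compute $h^0(C,\O_C(S))=n+1$. The key observation is that a global section of $\O_C(S)$ is a tuple of sections on the components agreeing at the nodes, and the divisor conditions $\divis(y_i)|_{C_k}=(i-N_{k-1})p_k^-+(N_k-i)p_k^+$ (valid precisely when $N_{k-1}\le i\le N_k$) pin down each $y_i$ up to a scalar on each component where it is nonzero. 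The compatibility at the node $p_k^+$ forces the section $y_{N_k}$ (and only this one, at each node) to be nonzero on both adjacent components, which is the mechanism producing the overlap in the index ranges.

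Next I would establish equation (\ref{eq:embS}): the inclusion $\O_C\to\O_C(S)$ obtained by tensoring the ideal sheaf sequence $0\to\O_C(-S)\to\O_C\to\O_S\to0$ with $\O_C(S)$ sends the $1$-section to a global section of $\O_C(S)$ cutting out $S$, and expanding it in the basis gives $\sum a_iy_i$ with $a_0,a_n\ne0$ (since $S$ avoids $s_-,s_+$), normalised to $a_0=a_n=1$ by rescaling $y_0,y_n$. For the curve equations (\ref{eq:embC}) I would argue that the $2\times2$ minors of the displayed matrix vanish on $C$: on each component $C_k$ the ratios $y_{i+1}/y_i$ for $N_{k-1}\le i<N_k$ are the affine coordinate (up to the factor $b_{i+1}\cdots b_j$), so the rank condition holds there, and it propagates across nodes because $y_i$ vanishes identically off its range. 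This both defines $C$ scheme-theoretically in $\P^n_K$ (by a dimension/degree count against the Veronese image) and forces $b_j=0$ exactly at the node indices $j\in\{N_1,\ldots,N_{m-1}\}$: the coefficient $b_j$ measures the gluing datum at $p_{k}^+$ when $j=N_k$, and degeneration of the node corresponds to $b_j$ vanishing.

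Finally I would verify the nondegeneracy claim that $a_j$ and $b_j$ never vanish simultaneously. The point is that $b_j=0$ forces $j=N_k$ to be a node index, and there the section $y_j=y_{N_k}$ restricts nontrivially to both adjacent components; since $S$ must meet every component (by the stability condition in Definition \ref{def:L_n}) and in particular the component $C_{k+1}$, the defining equation (\ref{eq:embS}) cannot have all coefficients among $\{a_{N_{k-1}+1},\ldots,a_{N_k-1},a_{N_k}\}$ vanishing on that component; tracing this through shows $a_{N_k}\ne0$ whenever $b_{N_k}=0$.

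The main obstacle I anticipate is the bookkeeping at the nodes: correctly identifying which single section straddles each intersection point, verifying that the gluing compatibility is exactly encoded by $b_{N_k}$, and confirming that the rank-$<2$ condition on the matrix is equivalent to (\ref{eq:embC}) uniformly across components where some $b_j$ vanish. Once the irreducible case is understood (where all $b_j\in K^*$ and the argument is the standard rational normal curve computation), the reducible case is a careful gluing of these pictures, and the stability hypothesis that $S$ meets every component is precisely what is needed to close out the nondegeneracy statement.
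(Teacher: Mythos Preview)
Your overall approach is correct and matches the paper's treatment (which in fact omits a detailed proof and only sketches the component-by-component Veronese picture in Remark~\ref{rem:embK}). There is, however, a genuine gap in your nondegeneracy argument.

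You invoke the stability condition that $S$ meets every component, but this is not the condition that forces $a_{N_k}\neq 0$, and your deduction does not go through as written. Even with $a_{N_k}=0$, the equation $a_{N_{k-1}}y_{N_{k-1}}+\cdots+a_{N_k-1}y_{N_k-1}+0\cdot y_{N_k}=0$ on $C_k$ still cuts out a degree-$n_k$ subscheme; the problem is that this subscheme then contains the pole $p_k^+$. The correct condition, as noted in Remark~\ref{rem:embK}, is that $S$ avoids the nodes and $s_\pm$: on $C_k$ the pole $p_k^+$ is the point where $y_{N_k}\neq 0$ and $y_i=0$ for $N_{k-1}\leq i<N_k$, so $S\cap C_k$ avoids $p_k^+$ precisely when $a_{N_k}\neq 0$ (and symmetrically $a_{N_{k-1}}\neq 0$ from the other pole). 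With this correction your argument is complete. Also note a small index slip: when discussing the node $p_k^+=p_{k+1}^-$ you mention component $C_{k+1}$ but list coefficients $a_{N_{k-1}+1},\ldots,a_{N_k}$, which belong to $C_k$; the relevant coefficients on $C_{k+1}$ are $a_{N_k},\ldots,a_{N_{k+1}}$, and either viewpoint gives $a_{N_k}\neq 0$ once you use the correct hypothesis.
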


We will not work out the proof in detail, but add some remarks.

\begin{rem}\label{rem:embK}
The component $C_k$ is embedded into the projective subspace of 
$\P_K(H^0(C,\O_C(S)))$ spanned by the coordinates
$y_{N_{k-1}},\ldots,y_{N_k}$ via a Veronese embedding, 
the image is given by the equations corresponding to the 
condition that the rank of the matrix
\[
\left(
\begin{matrix}
y_{N_{k-1}}&b_{N_{k-1}+1}y_{N_{k-1}+1}&\ldots&
b_{N_{k-1}+1}\cdots b_{N_k-1}y_{N_k-1}\\
y_{N_{k-1}+1}&y_{N_{k-1}+2}&\ldots&y_{N_k}\\
\end{matrix}
\right)
\]
is less than $2$.
The equation (\ref{eq:embS}) reduces on $C_k$ to 
$a_{N_k}y_{N_k}+\ldots+a_{N_{k-1}}y_{N_{k-1}}=0$
which defines a finite subscheme $S_k$ of degree $n_k$ in 
$C_k\subseteq\P_K^{n_k}$. A subscheme $S_k$ of this form does not 
meet the poles of $C_k$ provided that $a_{N_k},a_{N_{k-1}}\neq 0$.
\end{rem}

We generalise this to degree-$n$-pointed chains over schemes.

\begin{prop}\label{prop:embY}
Let $(\pi\colon C\!\to\!Y,s_-,s_+,S)$ be a degree-$n$-pointed chain 
of $\P^1$ over a scheme $Y$. 
For any $y\in Y$ there is an open affine neighbourhood $Y'\subseteq Y$
such that there is a decomposition 
$\pi_*\O_C(S)|_{Y'}\cong\bigoplus_{i=0}^n\O_{Y'}y_i$ 
characterised on the fibres by the properties of proposition 
\ref{prop:embK}. The generators $y_0,\ldots,y_n\in H^0(Y',\pi_*\O_C(S))$ 
of the individual summands, after possibly rescaling by a global 
section of $\O_{Y'}^*$, satisfy:\\
{\rm (i)} The image of the $1$-section under the inclusion 
$\O_C\to\O_C(S)$ is of the form 
\[y_n\;+\;a_{n-1}y_{n-1}\;+\;\ldots\;+\;a_1y_1\;+\;y_0.\]
over $Y'$ for some $a_1,\ldots,a_{n-1}\in\O_{Y'}(Y')$.\\
{\rm (ii)} The kernel of the homomorphism of algebras
$\Sym\pi_*(\O_C(S))\to\bigoplus_{k=0}^\infty\pi_*\O_C(kS)$
is over $Y'$ generated by the equations
\[
y_iy_{j+1}=b_{i+1}\cdots b_jy_{i+1}y_j
\]
for $0\leq i<j<n$ and some $b_1,\ldots,b_{n-1}\in\O_{Y'}(Y')$. 

The line bundle $\O_C(S)$ determines a closed embedding 
$C\to C'\subset\P_Y(\pi_*(\O_C(S)))$ over $Y$.
Over $Y'$ in the coordinates $y_0,\ldots,y_n$ the embedded curve 
$C'_{Y'}\subset\linebreak \P_{Y'}(\pi_*(\O_C(S))|_{Y'})\cong\P^n_{Y'}$
is defined by the equations in  {\rm (ii)}, the image of $S_{Y'}$ 
in $C'_{Y'}$ by the additional equation in {\rm (i)}, and the sections 
$s_-,s_+$ are  $(1\!:\!0\!:\ldots:\!0)$, $(0\!:\ldots:\!0\!:\!1)$.
\end{prop}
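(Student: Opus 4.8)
\emph{The plan is} to work over an affine base $Y$ throughout and to reduce everything to the fibrewise statement of Proposition \ref{prop:embK} by means of cohomology and base change. First I would show that the sheaves $R_k:=\pi_*\O_C(kS)$ are locally free of rank $kn+1$ and that their formation commutes with base change. This follows from cohomology and base change once one knows that $H^1(C_y,\O_{C_y}(kS_y))=0$ on every geometric fibre, which holds because $\O_{C_y}(kS_y)$ has non-negative degree on each component of the genus-$0$ nodal curve $C_y$ (a chain of $\P^1$'s is a tree, so $H^1$ splits as the sum of the $H^1$'s of the components). Since $\O_C(S)$ is relatively ample (as in the proof of Proposition \ref{prop:L_n-stack}) and has multidegree $\geq 1$ on each component — here I use that, by Definition \ref{def:L_n}, $S$ meets every component — it is in fact relatively very ample, and chains of rational normal curves are projectively normal. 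Hence $\Sym R_1\to\bigoplus_k R_k$ is surjective, the relative $\Proj$ recovers $C$, and $\O_C(S)$ defines the claimed closed embedding $C\hookrightarrow\P_Y(\pi_*\O_C(S))\cong\P^n_Y$ whose homogeneous ideal is generated in degree $2$.

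The heart of the proof is the construction of the adapted basis $y_0,\dots,y_n$ of $R_1$ restricting fibrewise to the sections of Proposition \ref{prop:embK}. The two extreme sections are accessible directly from the marked sections: writing $D_\pm=s_\pm(Y)$, the fact that $S$ meets every component gives $n_1,n_m\geq1$, so $\pi_*\O_C(S-D_-)$ and $\pi_*\O_C(S-D_+)$ are again locally free of rank $n$, and the evaluations $\mathrm{ev}_{s_\pm}\colon R_1\to s_\pm^*\O_C(S)$ are surjective. Their kernels are $\langle y_1,\dots,y_n\rangle$ and $\langle y_0,\dots,y_{n-1}\rangle$, and the quotients single out the summands carrying $y_0$ and $y_n$. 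For the interior sections $y_1,\dots,y_{n-1}$ I would adapt the frame of $R_1$ to the determinantal (scroll) form: fibrewise the ideal is cut out by the $2\times2$ minors of the matrix of Remark \ref{rem:embK}, and bringing this matrix into the standard chain form amounts to a reduction of the structure group of the frame bundle of $R_1$ to the subgroup preserving that form. Over the affine base $Y$ this reduction can be carried out because the relevant torsor is under a split solvable group (the automorphism torus together with the unipotent part attached to the flag), hence Zariski-locally trivial; the resulting line-bundle summands then glue and, after the normalisations below, yield the free summands $\O_Y y_i$.

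Granting the decomposition $\pi_*\O_C(S)\cong\bigoplus_{i=0}^n\O_Y y_i$, the remaining assertions are bookkeeping. The image of the $1$-section under $\O_C\to\O_C(S)$ is a global section of $R_1$, hence of the form $\sum_i a_i y_i$ with $a_i\in\O_Y(Y)$; fibrewise $a_0,a_n$ are units by Proposition \ref{prop:embK}, so after rescaling the summands $\O_Y y_0,\O_Y y_n$ by $a_0,a_n\in\O_Y^*(Y)$ we may assume $a_0=a_n=1$, giving {\rm (i)} and equation \eqref{eq:embS}. For {\rm (ii)}, the products $y_iy_{i+2}$ and $y_{i+1}^2$ lie in $R_2$; expanding $y_iy_{i+2}$ in an adapted basis of $R_2$ and using that on every fibre only the coefficient of $y_{i+1}^2$ survives (Proposition \ref{prop:embK}), base-change compatibility forces the remaining coefficients to vanish globally, so $y_iy_{i+2}=b_{i+1}y_{i+1}^2$ for well-defined $b_{i+1}\in\O_Y(Y)$, and the full relations \eqref{eq:embC} follow by multiplying these. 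The non-degeneracy ``not both $a_j=b_j=0$'' is the fibrewise statement of Proposition \ref{prop:embK}, and the descriptions of $C'$, of $S$, and of $s_\pm$ are read off from the embedding.

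\emph{The hard part} is the construction of the interior sections at \emph{reducible} fibres. The naive filtrations by order of vanishing along $s_-$ or $s_+$ are \emph{not} flat: their fibre dimensions jump exactly over the boundary, because distinguishing, say, $y_0$ from $y_1$ requires the order of vanishing at an internal node, which is not the image of a section of $C\to Y$ and cannot be rigidified even after \'etale localisation. This is why I expect the argument to proceed through the determinantal/scroll form together with the triviality of the frame torsor over affine $Y$, or, alternatively, by induction on the length of the chain via the concatenation embeddings \eqref{eq:concat}, building the global basis from the component-wise bases glued along the nodes, rather than through any single flat filtration.
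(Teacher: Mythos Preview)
Your overall outline---cohomology and base change for $\pi_*\O_C(kS)$, relative very ampleness of $\O_C(S)$, generation of the homogeneous ideal in degree $2$, and the extraction of the $a_i$ and $b_i$---matches the paper. One small point on (ii): the paper does not simply multiply the consecutive relations. It first shows that for every pair $i<j$ the kernel of $\O_Yy_{i+1}y_j\oplus\O_Yy_iy_{j+1}\to\pi_*\O_C(2S)$ is locally free of rank one, hence generated by some $y_iy_{j+1}-b_{i+1,j}\,y_{i+1}y_j$, and then checks $b_{i+1,j}=b_{i+1}\cdots b_j$ by a cancellation argument inside $\bigoplus_k\pi_*\O_C(kS)$.

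The substantive difference is in the construction of the adapted basis, and here your argument has a gap. The paper does \emph{not} reduce structure groups. Near a point $y\in Y$ it uses the decomposition $C_y=C_1\cup\cdots\cup C_m$ to split the divisor $S=S_1+\cdots+S_m$ into disjoint relative effective Cartier divisors over an open $Y'\ni y$, each $S_k$ meeting exactly one component on every fibre over $Y'$. Each $S_k$ then determines a Knudsen-type contraction $C|_{Y'}\to\P^1_{Y'}$ onto a genuine $\P^1$-bundle (isomorphism on the component carrying $S_k$, contraction of the others), and on a $\P^1$-bundle equipped with the two sections $s_\pm$ the sections $y_i^{(k)}$ of $\O(S_k)$ with prescribed divisor $(i-N_{k-1})s_-+(N_k-i)s_+$ are available directly. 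Setting $y_i=\prod_k y_i^{(k)}$ gives the local basis of $\pi_*\O_C(S)|_{Y'}$; the intrinsic fibrewise characterisation of Proposition~\ref{prop:embK} then makes the rank-one summands glue over $Y$.

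Your torsor argument does not close the gap you yourself identify. The subgroup of $GL_{n+1}$ preserving the adapted decomposition is, modulo any unipotent part, the diagonal torus, and torsors under a torus over an affine scheme are exactly tuples of line bundles---Zariski-local triviality is automatic for such torsors and buys nothing towards a global frame. What does survive of your argument (once one observes, as you do, that the summands are intrinsically determined on each fibre) is the decomposition of $\pi_*\O_C(S)$ into line-bundle summands, and this is indeed what is really used downstream in Construction~\ref{con:curve->data}; but the explicit local sections still have to be \emph{produced}, and the paper's contraction-to-$\P^1$ trick is what does that. Your alternative via concatenation would likewise need the local splitting $S=S_1+\cdots+S_m$ and the component-wise $\P^1$-bundles, so it converges to the paper's method rather than bypassing it.
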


\pagebreak
\begin{proof}
The decomposition of the fibre $C_y$ over a point $y\in Y$ into 
irreducible components $C_y=C_1\cup\ldots\cup C_m$ determines over 
an open affine subscheme $Y'\subseteq Y$ containing $y$ a decomposition 
of $S$ into divisors $S_1,\ldots,S_m$ which are disjoint and such 
that $S_k$ only meets one component on each fibre and the component 
$C_k$ over $y$.
Each $S_k$ determines a morphism  onto a $\P^1$-bundle 
over $Y'$, which on the fibres is an isomorphism on the 
component containing $S_k$ and contracts the other components 
(similar to the contraction morphisms in \cite{Kn83}, cf.\ 
also \cite[3.3]{BB11a}). After possibly shrinking $Y'$,
we have global sections $y_0^{(k)},\ldots,y_n^{(k)}$ of 
$\O_{\P^1_{Y'}}(S_k)$ that satisfy 
$\divis(y_i^{(k)})=(i-N_{k-1})s_-+(N_k-i)s_+$
for $N_{k-1}\leq i\leq N_k$,
$\divis(y_i^{(k)})=(N_k-N_{k-1})s_+$
for $i<N_{k-1}$ and
$\divis(y_i^{(k)})=(N_k-N_{k-1})s_-$
for $i>N_k$
(using the notation $S_k,s_-,s_+$ also for their images in $\P^1_{Y'}$).
Using the pull-backs of these sections to $C_{Y'}$ denoted by the same symbols, 
let $y_i=y_i^{(1)}\cdots y_i^{(m)}\in H^0(C_{Y'},\O_C(S))=
H^0(C_{Y'},\O_{C_{Y'}}(S_1)\otimes\ldots\otimes\O_{C_{Y'}}(S_m))$.
Over the open affine neighbourhood $Y'$ of $y$ the sections $y_0,\ldots,y_n$ 
define a decomposition of $\pi_*\O_C(S)$ with the required properties.

The image of the $1$-section under the inclusion $\O_C\to\O_C(S)$
gives a global section of $\O_C(S)$ which over $Y'$ is of the form 
$\sum_{i=0}^na_iy_i$ with $a_i\in\O_{Y'}(Y')$. 
Since $a_0,a_n\in\O_{Y'}^*(Y')$ we can assume that $a_0,a_n=1$.

Using what is known about the fibres and results from
\cite[III]{EGA} (cf.\ also \cite{Kn83}), we derive that 
$\pi_*(\O_C(kS))$ for $k>0$ is locally free of rank $kn+1$, 
further that the homomorphism $\pi^*\pi_*\O_C(S)\to\O_C(S)$ is 
surjective and defines a closed embedding 
$C\to\P_Y(\pi_*\O_C(S))$.

The embedding $C\to\P^n_Y$ corresponds to the surjection of 
graded algebras\linebreak
$\Sym\pi_*(\O_C(S))\to\bigoplus_{k=0}^\infty\pi_*\O_C(kS)$.
Its kernel $\mathscr I$ is the graded ideal that defines the
embedded curve $C'\subset\P^n_Y$.
Each part $\mathscr I_k$ of $\mathscr I$ is locally free, being  
the kernel of a surjective homomorphism of locally free sheaves. 
The graded ideal $\mathscr I$ is generated in degree $2$ since 
this is the case on the fibres $\mathscr I\otimes\kappa(y)$ for 
each point $y\in Y$. The part $\mathscr I_2$ of 
degree $2$ is a vector bundle of rank  $\frac{1}{2}n(n-1)$. 

Working over $Y'$, for $i<j$ the subsheaves 
$\left<y_{i+1}y_j,y_iy_{j+1}\right>$ and 
$\left<y_{i+1}y_j\right>$ of\linebreak $\pi_*\O_C(S)|_{Y'}$, i.e.\ the 
subsheaves generated by the respective sections, coincide as this
is true on the fibres. Considering the case $j=i+1$, the kernel 
of the surjective homomorphism 
$\Sym^2\pi_*(\O_C(S))|_{Y'}\supset\O_{Y'}y_{i+1}^2\oplus\O_{Y'}y_iy_{i+2}\to
\left<y_{i+1}^2,y_iy_{i+2}\right>\subset\pi_*\O_C(2S)|_{Y'}$
is generated by an element $y_iy_{i+2}-b_{i+1}y_{i+1}^2$ 
for some\linebreak $b_{i+1}\in\O_{Y'}(Y')$.
For general $i<j$ we have as kernel $y_iy_{j+1}-b_{i+1,j}y_{i+1}y_j$
for some $b_{i+1,j}\in\O_{Y'}(Y')$ and from the equation
$(y_{i+2}\cdots y_j)b_{i+1,j}y_{i+1}y_j=(y_{i+2}\cdots y_j)y_iy_{j+1}
=(b_{i+1}\cdots b_j)(y_{i+2}\cdots y_j)y_{i+1}y_j$ in $\pi_*\O_C(S)|_{Y'}$ 
we conclude that $b_{i+1,j}=b_{i+1}\cdots b_j$. 
\end{proof}

We define morphisms of fibred categories 
$\Phi\colon\overline{\mathcal L}_n\to\mathcal C_{\mathbf\Upsilon(A_{n-1})}$ 
and
$\Psi\colon\mathcal C_{\mathbf\Upsilon(A_{n-1})}\to\overline{\mathcal L}_n$. 

\begin{con}\label{con:curve->data}
Let $\mathscr C=(C\to Y,s_-,s_+,S)$ be a degree-$n$-pointed chain 
of $\P^1$ over a scheme $Y$. For any point $y\in Y$ we have an 
open neighbourhood $U\subseteq Y$ over which we have a decomposition 
$\pi_*\O_C(S)|_U\cong\O_U^{\oplus n+1}$ and a basis $y_0,\ldots,y_n$ 
as in proposition \ref{prop:embY}, and we obtain functions 
$a_1,\ldots,a_{n-1},b_1,\ldots,b_{n-1}\in\O_U(U)$.
We define $(\mathscr L_{\rho_i},a_i):=(\O_U,a_i)$,
$(\mathscr L_{\tau_i},b_i):=(\O_U,b_i)$ and have the isomorphisms
$c_i\colon\mathscr L_{\tau_i}\!\otimes\mathscr L_{\rho_{i-1}}
\!\otimes\mathscr L_{\rho_i}^{\otimes\,\minus2}
\otimes\mathscr L_{\rho_{i+1}}\to\O_U$ 
(omit $\mathscr L_{\rho_0}$, $\mathscr L_{\rho_n}$)
given by the identities on $\O_U$. These data form a 
$\mathbf\Upsilon(A_{n-1})$-collection over $U$, 
the nondegeneracy condition that not both $a_i=0$ and $b_i=0$ 
in each point is satisfied by construction and proposition 
{\rm\ref{prop:embK}}.

Different choices of bases $y_0,\ldots,y_n$ and $y'_0,\ldots,y'_n$
over $U$ and $U'$ as above are related over $U''=U\cap U'$ by 
$y_i=\kappa_i'y'_i$ for some $\kappa_i'\in\O_{U''}^*(U'')$, 
where $\kappa_0'=\kappa_n'$. Let $\kappa_i=\kappa_i'/\kappa_0'$.
There is an isomorphism between the corresponding 
$\mathbf\Upsilon(A_{n-1})$-collections over $U''$ that, with 
respect to the given trivialisations, is given by the collection 
$\kappa_1,\ldots,\kappa_{n-1},\lambda_1,\ldots,\lambda_{n-1}
\in\O_{U''}^*(U'')$ as in remark \ref{rem:morphcoll}.

We cover $Y$ by open subschemes $U$ as above, obtain  
$\mathbf\Upsilon(A_{n-1})$-collections on this covering and glue 
them to a $\mathbf\Upsilon(A_{n-1})$-collection 
$\mathscr L=\Phi\mathscr C$ on $Y$.

For a morphism $\mathscr C'\to\mathscr C$ of degree-$n$-pointed
chains over $f\colon Y'\to Y$, i.e.\ a cartesian diagram
consisting of $f$, a morphism $F\colon C'\to C$ that maps 
$s_-',s_+',S'$ to $s_-,s_+,S$ and $\pi\colon C\to Y$, 
$\pi'\colon C'\to Y'$, we have a morphism of 
$\mathbf\Upsilon(A_{n-1})$-collections $\mathscr L'=\Phi\mathscr C'\to
\mathscr L=\Phi\mathscr C$ over $f\colon Y'\to Y$. 
Locally over $U''=f^{-1}(U)\cap U'$, where $U'$ and $U$ are elements 
of the chosen open coverings of $Y'$ and $Y$, we have chosen local bases 
$f^*y_0,\ldots,f^*y_n$ and $y'_0,\ldots,y'_n$ of 
$f^*\pi_*\O_C(S)\cong \pi'_*F^*\O_C(S)\cong\pi'_*\O_{C'}(S')$. 
Comparing these bases gives rise to isomorphisms
$f^*\mathscr L_{\rho_i}|_{U''}\to\mathscr L'_{\rho_i}|_{U''}$, 
$f^*\mathscr L_{\tau_i}|_{U''}\to\mathscr L'_{\tau_i}|_{U''}$
as above, and these can be glued to a morphism 
$\mathscr L'\to\mathscr L$.

One checks that this defines a functor 
$\Phi\colon\overline{\mathcal L}_n\to\mathcal C_{\mathbf\Upsilon(A_{n-1})}$. 
The functor $\Phi$ is base-preserving and sends cartesian
arrows to cartesian arrows.
\end{con}

\begin{con}\label{con:data->curve}
Let $\mathscr L=((\mathscr L_{\rho_i},a_i)_i,
(\mathscr L_{\tau_i},b_i)_i,(c_i)_i)$ be a
$\mathbf\Upsilon(A_{n-1})$-collection on a scheme $Y$. 
For any point $y\in Y$ we have an open neighbourhood 
$U\subseteq Y$ over which we can choose trivialisations 
$\mathscr L_{\rho_i}|_U$, $\mathscr L_{\tau_i}|_U\cong\O_U$ 
such that the isomorphisms 
$c_{i}\colon(\mathscr L_{\tau_i}\!\otimes\mathscr L_{\rho_{i-1}}
\!\otimes\mathscr L_{\rho_i}^{\otimes\,\minus2}\otimes
\mathscr L_{\rho_{i+1}})|_U\to\O_U$ 
for $i=1,\ldots,n-1$ (omit $\mathscr L_{\rho_0},\mathscr L_{\rho_n}$) 
are the identities on $\O_U$.
Let $C$ be the closed subscheme of $\P^n_U$ given by the
equations $y_iy_{j+1}=b_{i+1}\cdots b_jy_{i+1}y_j$
for $0\leq i<j\leq n-1$, where $y_0,\ldots,y_n$ are homogeneous 
coordinates of $\P^n_U$ and $b_1,\ldots,b_{n-1}$ are considered 
as regular functions on $U$ via the isomorphisms 
$\mathscr L_{\tau_i}|_U\cong\O_U$, and let $\pi\colon C\to U$ be
induced by $\P^n_U\to U$. 
By construction, the subscheme $C\subseteq\P^n_U$ is isomorphic 
to  $\Proj_U\mathscr S$ where $\mathscr S$ is the graded algebra
$\O_U[y_0,\ldots,y_n]\,/
\left<y_iy_{j+1}=b_{i+1}\cdots b_jy_{i+1}y_j;i<j\right>$. 
The morphism $\pi\colon C\to U$ is flat since each 
graded piece of $\mathscr S$ is locally free
(\cite[III, (7.9.14)]{EGA}). Indeed, we have
$\mathscr S_k\cong\O_U^{\oplus kn+1}$ with basis $y_n^k$ and
$y_i^{k-l}y_{i+1}^l$ for $i=0,\ldots,n\!-\!1$ and 
$l=0,\ldots,k\!-\!1$.
Let $s_-,s_+$ be the sections $(1\!:\!0:\ldots:\!0)$,
$(0\!:\ldots:\!0\!:\!1)$ with respect to the coordinates 
$y_0,\ldots,y_n$ and let $S\subset C$ be the subscheme 
given by the additional equation
$y_n+a_{n-1}y_{n-1}+\ldots+a_1y_1+y_0=0$, where again  
$a_1,\ldots,a_{n-1}$ are considered as regular functions via 
$\mathscr L_{\rho_i}|_U\cong\O_U$.
This defines a degree-$n$-pointed chain $(C\to U,s_-,s_+,S)$ over $U$.

Different choices of local trivialisations of the line bundles
$\mathscr L_{\rho_i}$ over $U, U'$ are related by multiplication 
by some $\kappa_i\in\O_{U''}^*(U'')$ over $U''=U\cap U'$. The 
corresponding degree-$n$-pointed chains in $\P^n_{U''}$ are connected 
by the automorphism of $\P^n_{U''}$ given by multiplying the 
homogeneous coordinates with $\kappa_0,\ldots,\kappa_n$ 
(set $\kappa_0=\kappa_n=1$).

We cover $Y$ by open subschemes $U$ as above, obtain  
degree-$n$-pointed chains over this covering and glue them 
to a degree-$n$-pointed chain $\Psi\mathscr L$ over $Y$.

For a morphism $\mathscr L'\to\mathscr L$ of 
$\mathbf\Upsilon(A_{n-1})$-collections over $f\colon Y'\to Y$,
i.e.\ a collection of isomorphisms of line bundles with sections
$(f^*\!\mathscr L_{\rho_i},f^*a_i)\to(\mathscr L_{\rho_i}',a_i')$,
$(f^*\!\mathscr L_{\tau_i},f^*b_i)\to(\mathscr L_{\tau_i}',b_i')$,
we have a morphism $\mathscr C'=\Psi\mathscr L'\to\mathscr C=\Psi\mathscr L$ 
of degree-$n$-pointed chains over $f\colon Y'\to Y$: locally on 
$U''=f^{-1}(U)\cap U'$, where $U'$ and $U$ are elements of the chosen 
open coverings of $Y'$ and $Y$, using the given trivialisations
$\mathscr L'_{\rho_i}|_{U'}\cong\O_{U'}$ and
$f^*\!\mathscr L_{\rho_i}|_U\cong f^*\O_U=\O_{f^{-1}(U)}$,
the isomorphisms $f^*\!\mathscr L_{\rho_i}\to\mathscr L_{\rho_i}'$
are given by multiplication with elements $\kappa_i\in\O_{U''}^*(U'')$, 
and the automorphism of $\P^n_{U''}$ such that the coordinates 
$y_i'$ and $f^*y_i$ are related by multiplication by $\kappa_i$ 
(set $\kappa_0=\kappa_n=1$) induces an isomorphism of the embedded 
degree-$n$-pointed chains over $U''$.
By glueing we obtain an isomorphism $\mathscr C'\to f^*\mathscr C$.

This defines a functor $\Psi\colon\mathcal C_{\mathbf\Upsilon(A_{n-1})}
\to\overline{\mathcal L}_n$ which is base-preserving and sends cartesian 
arrows to cartesian arrows.
\end{con}

These two functors give the equivalence of fibred categories stated 
in the theorem.

\begin{proof}[Proof of theorem \ref{thm:modulistack-toricstack}]
We show that the fibred categories 
$\overline{\mathcal L}_n$ and $\mathcal C_{\mathbf\Upsilon(A_{n-1})}$
are equivalent using the functors
$\Psi\colon\mathcal C_{\mathbf\Upsilon(A_{n-1})}\to\overline{\mathcal L}_n$
and
$\Phi\colon\overline{\mathcal L}_n\to\mathcal C_{\mathbf\Upsilon(A_{n-1})}$.

For an object $\mathscr C=(C\to Y,s_-,s_+,S)$ in $\overline{\mathcal L}_n$,
after choice of a suitable open covering, the data $\Phi\mathscr C$ is 
given locally by
$(\mathscr L_{\rho_i}|_U=\O_U,a_i)$, $(\mathscr L_{\tau_i}|_U=\O_U,b_i)$ 
and locally $\mathscr C$ is isomorphic to the embedded object in $\P^n_U$
defined by the functions $a_i,b_i$ with respect to coordinates 
$y_0,\ldots,y_n$ of $\P^n_U$, see construction \ref{con:curve->data}. 
Applying the functor $\Psi$, we choose a covering and 
isomorphisms $\mathscr L_{\rho_i}|_{U'},\mathscr L_{\tau_i}|_{U'}\to\O_{U'}$ 
as in construction \ref{con:data->curve} giving rise to functions 
$\tilde a_i,\tilde b_i$, and these  define the object 
$\Psi\Phi\mathscr C$ locally embedded in $\P^n_{U'}$ with homogeneous 
coordinates $\tilde y_i$. 
Comparing the two isomorphisms $\mathscr L_{\rho_i}|_{U''}\to\O_{U''}$
over open subschemes $U''=U\cap U'$, we obtain an isomorphism 
$\Psi\Phi\mathscr C\to\mathscr C$ locally over $U''$ using the two local
embeddings of $\Psi\Phi\mathscr C$ and $\mathscr C$ in $\P^n_{U''}$.
One checks that these isomorphisms form an isomorphism of functors
$\Psi\circ\Phi\cong\Id$.

Starting with a $\mathbf\Upsilon(A_{n-1})$-collection $\mathscr L$ on $Y$, 
after choice of a covering and isomorphisms 
$\mathscr L|_U\cong((\O_U,a_i)_i,(\O_U,b_i)_i,(\id)_i))$,
we construct an object $\Psi\mathscr L=(\pi\colon C\to Y,s_-,s_+,S)$, 
locally embedded in $\P^n_U$ with homogeneous coordinates $y_0,\ldots,y_n$ 
using the functions $a_i,b_i\in\O_U(U)$.
From $\Psi\mathscr L$ we extract data $\Phi\Psi\mathscr L$ after choice 
of a suitable covering and local bases 
$\tilde y_0,\ldots,\tilde y_n$ of $\pi_*(\O_C(S))$, locally given by 
$(\tilde{\mathscr L}_{\rho_i}|_{U'}=\O_{U'},\tilde a_i)$, 
$(\tilde{\mathscr L}_{\tau_i}|_{U'}=\O_{U'},\tilde b_i)$
for elements $U'$ of the covering.
Comparing the two collections of homogeneous coordinates,
satisfying the conditions of proposition \ref{prop:embY}, of 
$\P^n_U\times_UU''\stackrel{\sim\;}{\to}
\P_{U''}(\bar{\pi}_*\O_{\P^n_U}(\bar S)|_{U''})
\cong\P_{U''}(\pi_*\O_{C}(S)|_{U''})$ where
$\bar S\subset\P^n_U$ is the hyperplane determined by the 
equation $\sum_ia_iy_i=0$ (set $a_0,a_{n+1}=1$) and
$\bar{\pi}\colon\P^n_U\to U$, and thus the two ways 
the object $\mathscr C$ is locally embedded in $\P^n_{U''}$ 
for open sets $U''=U\cap U'$, we obtain an 
isomorphism of $\mathbf\Upsilon(A_{n-1})$-collections 
$\Phi\Psi\mathscr L\to\mathscr L$ locally given by some 
$\kappa_1,\ldots,\kappa_{n-1},\lambda_1,\ldots,\lambda_{n-1}\in\O^*_{U''}(U'')$
as in remark \ref{rem:morphcoll}.
One verifies that this gives an isomorphism of functors 
$\Phi\circ\Psi\cong\Id$.
\end{proof}

\bigskip
\pagebreak

\begin{cor}
The coarse moduli space of $\overline{\mathcal L}_n$, which 
coincides with the quotient $\overline{L}_n/S_n$, is isomorphic 
to the toric variety $Y(A_{n-1})$ corresponding to the simplicial 
fan $\Upsilon(A_{n-1})$ underlying the stacky fan 
$\mathbf\Upsilon(A_{n-1})$.
\end{cor}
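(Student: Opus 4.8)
The plan is to reduce the statement, by means of theorem~\ref{thm:modulistack-toricstack}, to the computation of the coarse moduli space of the toric orbifold $\mathcal Y(A_{n-1})$, and then to identify this coarse space with Cox's quotient presentation of the toric variety $Y(A_{n-1})$. By theorem~\ref{thm:modulistack-toricstack} there is an isomorphism of stacks $\overline{\mathcal L}_n\cong\mathcal Y(A_{n-1})=\mathcal X_{\mathbf\Upsilon(A_{n-1})}=[U/G]$, and coarse moduli spaces are invariant under isomorphism of stacks. On the other hand it was shown in section~\ref{sec:modulistacks} that the morphism $\overline{\mathcal L}_n\to\overline{L}_n/S_n$ realises the coarse moduli space of $\overline{\mathcal L}_n$. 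Hence it suffices to show that the coarse moduli space of $[U/G]$ is $Y(A_{n-1})$; the asserted isomorphism $\overline{L}_n/S_n\cong Y(A_{n-1})$ then follows from the uniqueness of coarse moduli spaces.

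First I would invoke the general principle that the coarse moduli space of the toric orbifold $\mathcal X_{\mathbf\Sigma}$ attached to a stacky fan $(N,\Sigma,\beta)$ is the toric variety $X_\Sigma$ of the underlying simplicial fan $\Sigma$, taken with the primitive generators of its rays (compare \cite{BCS05}, \cite{FMN10}). For the stacky fan $\mathbf\Upsilon(A_{n-1})$ this comparison is especially transparent when $n\ge 3$: every column of the defining matrix $(\minus C(A_{n-1})\;I_{n-1})$ contains an entry $\pm1$, so each of the generators $n_{\rho_i}$ and $n_{\tau_i}$ is already a primitive lattice vector generating the corresponding ray of $\Upsilon(A_{n-1})$. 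Thus $\beta$ coincides with the primitive-generator map underlying Cox's construction \cite{Co95a}, the group $G$ coincides with the corresponding Cox quotient group, and the open subset $U\subseteq\A^{2(n-1)}$ is exactly Cox's open set for the fan $\Upsilon(A_{n-1})$. The stacky structure of $\mathcal Y(A_{n-1})$ arises solely from the non-smoothness of the cones $\sigma_I$ --- for instance $\det(\minus C(A_{n-1}))=\pm\,n$ yields a finite $\mu_n$-stabiliser over the torus-fixed point of $\sigma_\emptyset$ --- so that the coarse space is the geometric quotient $U/G$, which for a simplicial fan is precisely the toric variety $Y(A_{n-1})$.

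The remaining small cases $n\le 2$ are immediate: $\mathcal Y(A_0)$ is a point, while for $n=2$ the explicit computation in example~\ref{ex:L_2} gives $\mathcal Y(A_1)=\P(1,2)$, whose coarse orbit space is the toric variety $\P^1=Y(A_1)$ even though the non-primitive generator $n_{\rho_1}=\minus2$ makes $G$ differ from the Cox quotient group. The step I expect to require the most care is the fact that the entire argument takes place over $\Z$: the stabilisers $\mu_n$ fail to be \'etale in characteristics dividing $n$, so the existence of the coarse moduli space and its compatibility with base change rely on $\mathcal Y(A_{n-1})$ being a tame stack in the sense of \cite{AOV08}, as recorded in section~\ref{sec:toricorbifoldsAn}. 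Granting this, the identification of the coarse space with $Y(A_{n-1})$ amounts to comparing the stack-theoretic quotient $[U/G]$ with Cox's integral geometric quotient $U/G$, and this comparison is the only genuinely delicate point.
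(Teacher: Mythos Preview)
Your proposal is correct and follows the approach the paper intends: the corollary is stated without proof, so it is meant to follow immediately from theorem~\ref{thm:modulistack-toricstack} together with the general fact (cf.\ \cite{BCS05}, \cite{FMN10}) that the coarse moduli space of a toric orbifold $\mathcal X_{\mathbf\Sigma}$ is the toric variety of the underlying simplicial fan, combined with the identification $\overline{L}_n/S_n$ as coarse moduli space already established in section~\ref{sec:modulistacks}. Your case distinction $n\ge 3$ versus $n\le 2$ is not needed---the general statement about coarse moduli of toric orbifolds applies uniformly regardless of whether the $n_\rho$ are primitive---but the observation is harmless and the argument is sound.
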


\begin{ex}\label{ex:L_2-Y(A_1)}
In the case $n=2$ we have the isomorphism 
$\overline{\mathcal L}_2\cong\mathcal Y(A_1)$. 
The stacky fan of $\mathcal Y(A_1)$ was pictured in 
example \ref{ex:stackyfanY(A_1)}.
We have the following types of pointed chains over 
$\overline{\mathcal L}_2\cong\mathcal Y(A_1)$ 
(cf.\ also example \ref{ex:L_2}):

\bigskip  
\noindent
\begin{picture}(150,44)(0,0)
\put(8,10){\makebox(0,0)[l]{\large$\overline{\mathcal L}_2$}}
\put(5,0)
{
\put(25,10){\line(1,0){100}}
\put(40,5){\makebox(0,0)[b]{\tiny$a_1=0$}}
\put(40,2){\makebox(0,0)[c]{\small$\rho_1$}}
\put(40,9){\line(0,1){2}}
\put(40.5,11.5){\makebox(0,0)[b]{\large$\curvearrowright$}}
\put(40.1,14.5){\makebox(0,0)[b]{\small$\mu_2$}}
\put(75,5){\makebox(0,0)[b]{\tiny$a_1,b_1\neq 0$}}
\put(110,5){\makebox(0,0)[b]{\tiny$b_1=0$}}
\put(110,2){\makebox(0,0)[c]{\small$\tau_1$}}
\put(110,9){\line(0,1){2}}
\put(40,32){
\drawline(0,-12)(0,12)
\put(0,3){\circle*{0.8}}
\put(0,-3){\circle*{0.8}}
\put(3,3){\vector(-1,0){2}}
\put(3,-3){\vector(-1,0){2}}
\spline(3,3)(3.8,2.9)(5,1.5)(5,-1.5)(3.8,-2.9)(3,-3)
}
\put(75,32){
\drawline(0,-12)(0,12)
\put(0,4){\circle*{0.8}}
\put(0,-4){\circle*{0.8}}
}
\put(110,32){
\drawline(2,-2)(-2,12)
\drawline(2,2)(-2,-12)
\put(0,5){\circle*{0.8}}
\put(0,-5){\circle*{0.8}}
}
}
\end{picture}
\end{ex}

\begin{ex}
In the case $n=3$ we have the isomorphism 
$\overline{\mathcal L}_3\cong\mathcal Y(A_2)$.
The stacky fan of $\mathcal Y(A_2)$ appeared in 
example \ref{ex:stackyfanY(A_2)}.
Here we picture the types of pointed chains over the torus 
invariant divisors of the moduli stack $\overline{\mathcal L}_3$.

\bigskip
\noindent
\begin{picture}(150,56)(0,0)
\put(0,10){
\put(2.5,0){
\qbezier(0,0)(20,4)(40,0)
\qbezier(35,0)(55,4)(75,0)
\qbezier(70,0)(90,4)(110,0)
\qbezier(105,0)(125,4)(145,0)
}
\qbezier(2.5,0.8)(7,0.1)(7.5,0)
\qbezier(147.5,0.8)(143,0.1)(142.5,0)
}
\put(7,6){\makebox(0,0)[c]{\tiny$b_1,b_2=0$}}
\put(5,1){\makebox(0,0)[c]{\small$\sigma_{\{1,2\}}$}}
\put(40,6){\makebox(0,0)[c]{\tiny$b_2,a_1=0$}}
\put(40,1){\makebox(0,0)[c]{\small$\sigma_{\{2\}}$}}
\put(75,6){\makebox(0,0)[c]{\tiny$a_1,a_2=0$}}
\put(75,1){\makebox(0,0)[c]{\small$\sigma_{\emptyset}$}}
\put(110,6){\makebox(0,0)[c]{\tiny$a_2,b_1=0$}}
\put(110,1){\makebox(0,0)[c]{\small$\sigma_{\{1\}}$}}
\put(143,6){\makebox(0,0)[c]{\tiny$b_1,b_2=0$}}
\put(145,1){\makebox(0,0)[c]{\small$\sigma_{\{1,2\}}$}}
\put(22.5,6){\makebox(0,0)[c]{\tiny$b_2=0$}}
\put(22.5,1){\makebox(0,0)[c]{\small$\tau_2$}}
\put(57.5,6){\makebox(0,0)[c]{\tiny$a_1=0$}}
\put(57.5,1){\makebox(0,0)[c]{\small$\rho_1$}}
\put(92.5,6){\makebox(0,0)[c]{\tiny$a_2=0$}}
\put(92.5,1){\makebox(0,0)[c]{\small$\rho_2$}}
\put(127.5,6){\makebox(0,0)[c]{\tiny$b_1=0$}}
\put(127.5,1){\makebox(0,0)[c]{\small$\tau_1$}}
\put(40.5,12){\makebox(0,0)[b]{\large$\curvearrowright$}}
\put(40.1,15){\makebox(0,0)[b]{\small$\mu_2$}}
\put(75.5,12){\makebox(0,0)[b]{\large$\curvearrowright$}}
\put(75.1,15){\makebox(0,0)[b]{\small$\mu_3$}}
\put(110.5,12){\makebox(0,0)[b]{\large$\curvearrowright$}}
\put(110.1,15){\makebox(0,0)[b]{\small$\mu_2$}}
\put(5,38){
\drawline(1,-8)(1,8)
\drawline(1.5,4)(-2.5,16)
\drawline(1.5,-4)(-2.5,-16)
\put(1,0){\circle*{0.8}}
\put(-0.5,10){\circle*{0.8}}
\put(-0.5,-10){\circle*{0.8}}
}
\put(22.5,38){
\drawline(1.5,-2)(-1.5,16)
\drawline(1.5,2)(-1.5,-16)
\put(0,7){\circle*{0.8}}
\put(0.3333,-5){\circle*{0.8}}
\put(-1,-13){\circle*{0.8}}
}
\put(40,38){
\drawline(1.5,-2)(-1.5,16)
\drawline(1.5,2)(-1.5,-16)
\put(0,7){\circle*{0.8}}
\put(0.1666,-6){\circle*{0.8}}
\put(-0.8333,-12){\circle*{0.8}}
\put(3.1666,-6){\vector(-1,0){2}}
\put(2.1666,-12){\vector(-1,0){2}}
\spline(3.2,-6)(4,-6.1)(5.5,-7.5)(5.2,-10.5)(4,-11.9)(3.2,-12)(2.2,-12)
}
\put(57.5,38){
\drawline(0,-16)(0,16)
\put(0,7){\circle*{0.8}}
\put(0,-5){\circle*{0.8}}
\put(0,-9){\circle*{0.8}}
}
\put(75,38){
\drawline(0,-16)(0,16)
\put(0,0){\circle*{0.8}}
\put(0,7){\circle*{0.8}}
\put(0,-7){\circle*{0.8}}
\put(3,7){\vector(-1,0){2}}
\put(3,0){\vector(-1,0){2}}
\put(3,-7){\vector(-1,0){2}}
\spline(3,0)(3.8,0.1)(5,1.5)(5,5.5)(3.8,6.9)(3,7)
\spline(3,0)(3.8,-0.1)(5,-1.5)(5,-5.5)(3.8,-6.9)(3,-7)
\spline(3,-7)(3.9,-6.9)(6,-5)(6,5)(3.9,6.9)(3,7)
}
\put(92.5,38){
\drawline(0,-16)(0,16)
\put(0,9){\circle*{0.8}}
\put(0,5){\circle*{0.8}}
\put(0,-7){\circle*{0.8}}
}
\put(110,38){
\drawline(1.5,-2)(-1.5,16)
\drawline(1.5,2)(-1.5,-16)
\put(0,-7){\circle*{0.8}}
\put(0.1666,6){\circle*{0.8}}
\put(-0.8333,12){\circle*{0.8}}
\put(3.1666,6){\vector(-1,0){2}}
\put(2.1666,12){\vector(-1,0){2}}
\spline(3.2,6)(4,6.1)(5.5,7.5)(5.2,10.5)(4,11.9)(3.2,12)(2.2,12)
}
\put(127.5,38){
\drawline(1.5,-2)(-1.5,16)
\drawline(1.5,2)(-1.5,-16)
\put(0,-7){\circle*{0.8}}
\put(0.3333,5){\circle*{0.8}}
\put(-1,13){\circle*{0.8}}
}
\put(145,38){
\drawline(1,-8)(1,8)
\drawline(1.5,4)(-2.5,16)
\drawline(1.5,-4)(-2.5,-16)
\put(1,0){\circle*{0.8}}
\put(-0.5,10){\circle*{0.8}}
\put(-0.5,-10){\circle*{0.8}}
}
\end{picture}
\end{ex}

\bigskip\bigskip

\pagebreak
\section{The functor of $X(A_{n-1})$, Losev-Manin moduli spaces\newline and the morphism to $\overline{\mathcal L}_n$}
\label{sec:LM}
\medskip

We start by comparing three descriptions of the functor of the 
toric variety associated with root systems of type $A$.
We use notations as in \cite[Section 2.1]{BB11a}, in particular we have
the lattice $M(A_{n-1})=\langle u_i-u_j:i,j\in\{1,\ldots,n\}\rangle
\subset\bigoplus_{i=1}^n\Z u_i$, generated by the roots $\beta_{ij}=u_i-u_j$
and forming the character lattice for the toric variety $X(A_{n-1})$. 
Its dual $N(A_{n-1})=\bigoplus_{i=1}^n\Z v_i/\sum_iv_i$,
where $(u_i)_i$ and $(v_i)_i$ are dual bases, is the 
lattice for the fan $\Sigma(A_{n-1})$ of $X(A_{n-1})$.

\medskip

The functor of the toric variety $X(A_{n-1})$ was described
in \cite{BB11a} in terms of $A_{n-1}$-data, i.e.\ 
families $(\mathscr L_{\{\pm\beta_{ij}\}},\{t_{\beta_{ij}},
t_{-\beta_{ij}}\})_{\{\pm\beta_{ij}\}}$ 
of line bundles with two generating sections that satisfy
$t_{\alpha}t_{\beta}t_{-\gamma}=t_{-\alpha}t_{-\beta}t_\gamma$
if $\gamma=\alpha+\beta$, up to isomorphism of line bundles with 
a pair of sections. With pull-back of line bundles and its sections
we have the functor $F_{A_{n-1}}$ of $A_{n-1}$-data, see 
\cite[Def.\ 1.17]{BB11a}.

On $X(A_{n-1})$ we have the universal $A_{n-1}$-data, which can be defined 
using the morphisms $\phi_{\{\pm\beta_{ij}\}}\colon X(A_{n-1})\to\P^1$ 
induced by pairs of opposite roots $\{\pm\beta_{ij}\}$ in $A_{n-1}$ 
(see \cite[Ex.\ 1.5 and 1.13]{BB11a}). We have homogeneous coordinates 
$z_{\beta_{ij}},z_{-\beta_{ij}}\in H^0(\P^1,\O_{\P^1}(1))$ 
such that
$x^{\beta_{ij}}=\phi_{\{\pm\beta_{ij}\}}^*(z_{\beta_{ij}}/z_{-\beta_{ij}})$,
where $x^u$ for $u\in M(A_{n-1})$ is the rational function corresponding
to an element of the root lattice.
Let $\mathscr L_{\{\pm\beta_{ij}\}}=\phi_{\{\pm\beta_{ij}\}}^*
\O_{\P^1}(1)$ and $t_{\beta_{ij}},t_{-\beta_{ij}}$ be the pull-back
of $z_{\beta_{ij}},z_{-\beta_{ij}}$. 

By \cite[Thm.\ 1.20]{BB11a} the toric variety $X(A_{n-1})$ together 
with the universal $A_{n-1}$-data represents the functor $F_{A_{n-1}}$.

\medskip

We can also apply the description of the functor of a smooth toric
variety of Cox \cite{Co95b} to $X(A_{n-1})$. The fan $\Sigma(A_{n-1})$
gives rise to the notion of a $\Sigma(A_{n-1})$-collection 
$((\mathscr L_I,w_I)_I,(c_{ij})_{ij})$ on a scheme $Y$, consisting 
of line bundles on $Y$ with a global section $(\mathscr L_I,w_I)$ 
for $\emptyset\neq I\subsetneq\{1,\ldots,n\}$ and isomorphisms
$c_{ij}\colon\big(\bigotimes_{i\in I,j\not\in I}\mathscr L_I\big)\otimes
\big(\bigotimes_{i\not\in I,j\in I}\mathscr L_I^{\otimes\minus1}\big)\to\O_Y$
for $i,j\in\{1,\ldots,n\}$, $i\neq j$, such that identifications
of the form $c_{ij}\otimes c_{jk}=c_{ik}$ hold.
These data have to satisfy the nondegeneracy condition that
for any point $y\in Y$ there are sets
$I_1\subset\ldots\subset I_{n-1}\subset\{1,\ldots,n\}$ with
$|I_i|=i$ such that $w_I(y)\neq 0$ if 
$I\neq I_1,\ldots,I_{n-1}$.
We denote the functor of $\Sigma(A_{n-1})$-collections by 
$C_{\Sigma(A_{n-1})}$.

On $X(A_{n-1})$ we have the universal $\Sigma(A_{n-1})$-collection 
given by the line bundles $\mathscr L_I=\O_{X(A_{n-1})}(D_I)$, 
where $D_I$ is the torus invariant prime divisor corresponding 
to the ray generated by $\sum_{i\in I}v_i$, with the section 
$w_I$ arising as the image of the $1$-section under the natural 
inclusion $\O_{X(A_{n-1})}\to\O_{X(A_{n-1})}(D_I)$ and the isomorphisms
$c_{ij}\colon\O_{X(A_{n-1})}(\sum_{i\in I,j\not\in I}D_I
-\sum_{i\not\in I,j\in I}D_I)\to\O_{X(A_{n-1})}$ induced by multiplication
with the rational functions $x^{\beta_{ij}}$ on $X(A_{n-1})$. 

By \cite{Co95b} the toric variety $X(A_{n-1})$ together with the 
universal $\Sigma(A_{n-1})$-collec-tion 
$((\O_{X(A_{n-1})}(D_I),w_I)_I,(c_{ij})_{ij})$ 
represents the functor $C_{\Sigma(A_{n-1})}$.

\medskip

As both functors $C_{\Sigma(A_{n-1})}$ and $F_{A_{n-1}}$ are
isomorphic to the functor of the toric variety $X(A_{n-1})$,
we have an isomorphism of functors $C_{\Sigma(A_{n-1})}\to F_{A_{n-1}}$, 
which we describe explicitely.

\begin{prop}\label{prop:SigmaAdata-Adata}
By the following procedure we can construct $A_{n-1}$-data\linebreak
$(\mathscr L_{\{\pm\beta_{ij}\}},\{t_{\beta_{ij}},
t_{-\beta_{ij}}\})_{\{\pm\beta_{ij}\}}$
out of a $\Sigma(A_{n-1})$-collection 
$((\mathscr L_I,w_I)_I,(c_{ij})_{ij})$ over\linebreak 
a scheme $Y$: for a pair of opposite roots $\pm\beta_{ij}$ in 
$A_{n-1}$ we have isomorphisms
$\bigotimes_{i\in I\!,\,j\not\in I}\mathscr L_I
\;\stackrel{\sim}{\longleftrightarrow}\;
\bigotimes_{i\not\in I\!,\,j\in I}\mathscr L_I$ 
of line bundles on $Y$ defined by $c_{ij},c_{ji}$ inverse to 
each other, and we let $\mathscr L_{\{\pm\beta_{ij}\}}$ be a 
line bundles in the same isomorphism class, with the sections 
$t_{\beta_{ij}},t_{-\beta_{ij}}$ defined as the images of 
$\prod_{i\in I,j\not\in I}w_I,\prod_{i\not\in I,j\in I}w_I$ 
in $\mathscr L_{\{\pm\beta_{ij}\}}$ under isomorphisms 
compatible with the above.
This construction defines an isomorphism of functors 
$C_{\Sigma(A_{n-1})}\to F_{A_{n-1}}$, mapping the universal 
$\Sigma(A_{n-1})$-collection to the universal $A_{n-1}$-data. 
\end{prop}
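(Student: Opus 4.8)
The plan is to exploit the fact that both functors are already known to be represented by the same toric variety: by \cite[Thm.\ 1.20]{BB11a} the variety $X(A_{n-1})$ together with its universal $A_{n-1}$-data represents $F_{A_{n-1}}$, and by \cite{Co95b} the same variety together with its universal $\Sigma(A_{n-1})$-collection represents $C_{\Sigma(A_{n-1})}$. Once the prescribed construction is shown to be a well-defined natural transformation $C_{\Sigma(A_{n-1})}\to F_{A_{n-1}}$, Yoneda's lemma identifies it with the $A_{n-1}$-data obtained by applying it to the universal $\Sigma(A_{n-1})$-collection; if this image is the universal $A_{n-1}$-data, the transformation corresponds to $\id_{X(A_{n-1})}$ under $C_{\Sigma(A_{n-1})}\cong\Hom(-,X(A_{n-1}))\cong F_{A_{n-1}}$ and is therefore an isomorphism. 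The proof thus splits into three tasks: (i) the construction outputs genuine $A_{n-1}$-data, (ii) it is functorial, and (iii) on the universal $\Sigma(A_{n-1})$-collection it returns the universal $A_{n-1}$-data.

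For (i), first I would check that the two sections $t_{\beta_{ij}}=\prod_{i\in I,\,j\not\in I}w_I$ and $t_{-\beta_{ij}}=\prod_{i\not\in I,\,j\in I}w_I$ generate $\mathscr L_{\{\pm\beta_{ij}\}}$. If both vanished at a point $y$, the nondegeneracy flag $I_1\subset\ldots\subset I_{n-1}$ would contain some $I$ with $i\in I$, $j\not\in I$ and some $I'$ with $i\not\in I'$, $j\in I'$; but the flag is totally ordered, and either $I\subseteq I'$ (forcing $i\in I'$) or $I'\subseteq I$ (forcing $j\in I$), a contradiction. The substantive point is the relation $t_\alpha t_\beta t_{-\gamma}=t_{-\alpha}t_{-\beta}t_\gamma$ for $\gamma=\alpha+\beta$: in type $A$ this means $\alpha=\beta_{ij}$, $\beta=\beta_{jk}$, $\gamma=\beta_{ik}$ for distinct $i,j,k$, so I would compare, for each index set $I$, the multiplicity with which $w_I$ enters each side. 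Running through the $2^3$ membership patterns of $i,j,k$ in $I$ shows these multiplicities agree pattern-by-pattern, so the two triple products are assembled from the same $w_I$ up to the identifications supplied by the $c_{ij}$.

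The delicate part, and what I expect to be the main obstacle, is upgrading this multiplicity match to an honest equality of sections in the single line bundle $\mathscr L_{\{\pm\alpha\}}\otimes\mathscr L_{\{\pm\beta\}}\otimes\mathscr L_{\{\pm\gamma\}}$. Here the cocycle conditions $c_{ij}\otimes c_{jk}=c_{ik}$ are exactly what is needed: they guarantee that the isomorphisms used to define the $\mathscr L_{\{\pm\beta_{ij}\}}$ and to transport the sections $t_{\pm\beta_{ij}}$ are mutually compatible, so the common-multiplicity computation assembles into the required equality rather than merely an equality up to an undetermined unit. Functoriality, task (ii), is then automatic: every line bundle, section, and isomorphism in the construction is built by pull-back, and pull-back commutes with tensor products and with the $c_{ij}$, so the construction respects the cleavages on both sides.

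Finally, for (iii) I would feed the construction the universal $\Sigma(A_{n-1})$-collection, where $\mathscr L_I=\O_{X(A_{n-1})}(D_I)$, $w_I$ is the canonical section cutting out $D_I$, and $c_{ij}$ is multiplication by $x^{\beta_{ij}}$. The construction then yields $\mathscr L_{\{\pm\beta_{ij}\}}=\O_{X(A_{n-1})}(\sum_{i\in I,\,j\not\in I}D_I)$ with the sections $\prod_{i\in I,\,j\not\in I}w_I$ and $\prod_{i\not\in I,\,j\in I}w_I$. To match this with the universal $A_{n-1}$-data $\mathscr L_{\{\pm\beta_{ij}\}}=\phi_{\{\pm\beta_{ij}\}}^*\O_{\P^1}(1)$, $t_{\pm\beta_{ij}}=\phi_{\{\pm\beta_{ij}\}}^*z_{\pm\beta_{ij}}$, I would read off from $c_{ij}$ the principal divisor $\divis(x^{\beta_{ij}})=\sum_{i\in I,\,j\not\in I}D_I-\sum_{i\not\in I,\,j\in I}D_I$ and combine it with the defining relation $x^{\beta_{ij}}=\phi_{\{\pm\beta_{ij}\}}^*(z_{\beta_{ij}}/z_{-\beta_{ij}})$. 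Comparing divisors of sections then shows $\phi_{\{\pm\beta_{ij}\}}^*z_{\beta_{ij}}$ and $\prod_{i\in I,\,j\not\in I}w_I$ vanish on the same divisor (and similarly for the opposite section), identifying the line bundles and their marked sections. Hence the image is the universal $A_{n-1}$-data, and by Yoneda the natural transformation is an isomorphism of functors.
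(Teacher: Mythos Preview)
Your proposal is correct and follows essentially the same approach as the paper: establish that the construction is a well-defined morphism of functors (the paper singles out only the generation condition, which you prove by the same flag argument), then use representability of both functors by $X(A_{n-1})$ to reduce the isomorphism claim to checking that the universal $\Sigma(A_{n-1})$-collection is sent to the universal $A_{n-1}$-data. Your treatment is more explicit than the paper's---you spell out the $2^3$ multiplicity check for the relation $t_\alpha t_\beta t_{-\gamma}=t_{-\alpha}t_{-\beta}t_\gamma$ and the divisor comparison for step~(iii), whereas the paper simply asserts these are ``easy to verify''---but the strategy and the key reduction via Yoneda are identical.
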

\begin{proof}
This construction defines a morphism of functors
$C_{\Sigma(A_{n-1})}\to F_{A_{n-1}}$, in particular the requirement 
that the two sections $t_{\pm\beta_{ij}}$ as defined in the 
construction generate the line bundle $\mathscr L_{\{\pm\beta_{ij}\}}$
follows from the nondegeneracy condition of $\Sigma(A_{n-1})$-data.  
One can show that this morphism of functors is an isomorphism by showing 
that it coincides with the composition of isomorphisms
$C_{\Sigma(A_{n-1})}\to\Mor(\,\cdot\,,X(A_{n-1}))\to F_{A_{n-1}}$.
This follows from the fact that the universal 
$\Sigma(A_{n-1})$-collection is mapped to the universal 
$A_{n-1}$-data, which is easy to verify.
\end{proof}

\medskip

Considering the universal data on $X(A_{n-1})$, we have 
isomorphisms
$\bigotimes_{i\in I}\mathscr L_I\stackrel{\sim\:}{\to}
\bigotimes_{j\in I}\mathscr L_I$ 
via multiplication by the rational function $x^{\beta_{ij}}$.
For any chosen $j\in\{1,\ldots,n\}$ we may define 
$x_1,\ldots,x_n\in H^0(X(A_{n-1}),\bigotimes_{j\in I}\mathscr L_I)$ 
as images of the sections $\prod_{1\in I}w_I,\ldots,\prod_{n\in I}w_I$ 
under these isomorphisms. We then have $x^{\beta_{ij}}=x_i/x_j$.

\begin{defi}\label{def:linebundlesL_j}
Given an ordering $i_1,\ldots,i_n$ of the set $\{1,\ldots,n\}$,
we define line bundles $\mathscr L_1,\ldots,\mathscr L_{n-1}$ on 
$X(A_{n-1})$ and sections $x_J$ of $\mathscr L_{|J|}$. Let 
\[\textstyle
\mathscr L_j=\bigotimes_I\mathscr L_I^{
\otimes(|\{i_1,\ldots,i_j\}\cap I|-\max\{0,|I|+j-n\})}
\]
be defined in terms of the universal $\Sigma(A_{n-1})$-collection.
The line bundles\linebreak $\bigotimes_I\mathscr L_I^{
\otimes(|J\cap I|-\max\{0,|I|+j-n\})}$ for any $J\subset\{1,\ldots,n\}$
of cardinality $j$ are isomorphic to $\mathscr L_j$ via multiplication 
by $\prod_{i\in J}x_i/\prod_{k=1}^jx_{i_k}$ (can also be 
expressed in terms of the isomorphisms $c_{ij}$ being part of the
universal $\Sigma(A_{n-1})$-collection).
We define $x_J\in H^0(X(A_{n-1}),\mathscr L_j)$ as the image of 
$\prod_Iw_I^{|J\cap I|-\max\{0,|I|+j-n\}}$ under this isomorphism.
\end{defi}

For these sections $x_J\in H^0(X(A_{n-1}),\mathscr L_{|J|})$
we have equations of rational functions\\[-4mm]
\begin{center}
$\prod_{j\in J}x_j/\prod_{j\in J'}x_j=x_J/x_{J'}
=\prod_{j\in J'}x_{\{1,\ldots,n\}\setminus\{j\}}/
\prod_{j\in J}x_{\{1,\ldots,n\}\setminus\{j\}}$.
\end{center}

\begin{rem}\label{rem:L_j-Delta_j}
The line bundle $\mathscr L_j$ was defined as $\O_{X(A_{n-1})}(D)$ 
in terms of the divisor $D=\sum_Id_ID_i$, where 
$d_I=|\{i_1,\ldots,i_j\}\cap I|-\max\{0,|I|+j-n\}$, 
which corresponds to the lattice polytope 
\[\textstyle
\begin{array}{rcl}
\Delta_j(A_{n-1})
&=&\bigcap_{I}\{\,u\in M(A_{n-1})_\Q\::\:\sum_{i\in I}v_i(u)\geq-d_I\,\}\\
&=&\conv\,\{\,\sum_{i\in J}u_i-\sum_{k=1}^ju_{i_k}:|J|=j\,\}\\
\end{array}
\] 
in $M(A_{n-1})_\Q$; the elements $x_J\in H^0(X(A_{n-1}),\mathscr L_j)$
for $|J|=j$ form a basis of global sections.
Different choices of the ordering fixed in the definition give rise to 
translated polytopes. 
\end{rem}

The line bundle $\mathscr L_1$ with its basis of global sections
$x_1,\ldots,x_n$ defines a morphism $X(A_{n-1})\to\P^{n-1}$.
This morphism is a composition of toric blow-ups as described
in \cite[(4.3.13)]{Ka93}. 
It maps the divisors $D_{\{i\}}$ of $X(A_{n-1})$ to the torus 
invariant prime divisor $D_i=\{x_i=0\}$ of $\P^{n-1}$, and more 
generally $D_I$ to $\bigcap_{i\in I}D_i$.
Similarly, we have a morphism $X(A_{n-1})\to\P^{n-1}$ defined
by the line bundle $\mathscr L_{n-1}$ mapping the divisor
$D_{\{1,\ldots,n\}\setminus\{i\}}$ of $X(A_{n-1})$ to the torus 
invariant prime divisor $D_i^*=\{x_{\{1,\ldots,n\}\setminus\{i\}}=0\}$ 
of $\P^{n-1}$, and more generally $D_{\{1,\ldots,n\}\setminus I}$ 
to $\bigcap_{i\in I}D_i^*$, see also \cite[(4.3.14)]{Ka93}.
We also consider the morphisms defined by the other line 
bundles $\mathscr L_j$:

\begin{prop}\label{prop:embX(A)prodP}
The line bundle $\mathscr L_j$ is generated by the basis of global 
sections $(x_J)_{|J|=j}$, it determines a projective toric morphism 
\[\textstyle
X(A_{n-1})\;\to\;\P(\langle x_J:|J|=j\rangle)\cong\P^{\binom{n}{j}-1}
\]
which is birational onto its image.
Together, these morphisms form a closed embedding
\begin{equation}\label{eq:embX(A)prodP}
\textstyle
X(A_{n-1})\;\to\;\prod_{j=1}^{n-1}\P(\langle x_J:|J|=j\rangle)
\cong\prod_{j=1}^{n-1}\P^{\binom{n}{j}-1}.
\end{equation}
The subscheme $X(A_{n-1})$ in this product is defined by
homogeneous equations
\begin{equation}\label{eq:X(A)subprodP}
\textstyle
\prod_{i=1}^lx_{J_i}\;=\;\prod_{i=1}^lx_{J'_i}
\end{equation}
where $\emptyset\neq J_i,J'_i\subsetneq\{1,\ldots,n\}$ such that
$|J_i|=|J'_i|$ and the equation for the characteristic functions
$\sum_i\chi_{J_i}=\sum_i\chi_{J'_i}$ is satisfied.
\end{prop}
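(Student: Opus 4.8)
The plan is to derive everything from the polytope descriptions recorded in Remark \ref{rem:L_j-Delta_j} and the theory of projective toric morphisms. First I would observe that, after the translation fixed there, $\Delta_j(A_{n-1})$ is the hypersimplex $\{\,u\in M(A_{n-1})_\Q : 0\le u_i\le 1,\ \sum_i u_i = j\,\}$, whose lattice points are exactly its $\binom{n}{j}$ vertices, the $0/1$-vectors of weight $j$; these correspond bijectively to the sections $x_J$ with $|J|=j$, which by Remark \ref{rem:L_j-Delta_j} form a basis of $H^0(X(A_{n-1}),\mathscr L_j)$. Since $\mathscr L_j=\O_{X(A_{n-1})}(D)$ with $\Delta_j$ its associated polytope, and since the normal fan of $\Delta_j$ is refined by $\Sigma(A_{n-1})$ (each maximal cone of the braid fan, a fixed total order, lies in the normal cone of a vertex of $\Delta_j$), the divisor $D$ is nef; hence $\mathscr L_j$ is globally generated and the vertex sections $x_J$ generate it. This gives part (1).

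For part (2), the morphism $\phi_j\colon X(A_{n-1})\to\P(\langle x_J:|J|=j\rangle)$ attached to $(\mathscr L_j,(x_J))$ has image the projective toric variety $X_{\Delta_j}$ determined by the lattice polytope $\Delta_j$. As $\Delta_j$ is full-dimensional in $M(A_{n-1})_\Q$, one has $\dim X_{\Delta_j}=n-1=\dim X(A_{n-1})$, and $\phi_j$ is the proper toric morphism induced over the identity of the lattice by the refinement of $\Sigma(A_{n-1})$ into the normal fan of $\Delta_j$; such a morphism is birational onto its image.

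The key step is part (3). I would compose $f=(\phi_1,\dots,\phi_{n-1})$ with the Segre embedding, so that $f$ is governed by $\bigotimes_{j=1}^{n-1}\mathscr L_j$ together with the subsystem spanned by the product sections $\prod_j x_{J_j}$. The polytope of $\bigotimes_j\mathscr L_j$ is the Minkowski sum $\sum_{j=1}^{n-1}\Delta_j$, and a direct computation of normal fans — the common refinement of the normal fans of the hypersimplices is the braid arrangement fan — shows that this sum is a translate of the permutohedron with normal fan exactly $\Sigma(A_{n-1})$. Hence $\bigotimes_j\mathscr L_j$ is ample, and very ample since $X(A_{n-1})$ is smooth. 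It then remains to check that the product sections already separate points and tangent vectors. This I would verify chart by chart: a maximal cone $\sigma$ corresponds to a vertex $w=\sum_j w_j$ of the permutohedron, hence to a total order of $\{1,\dots,n\}$ and to a choice of vertex $w_j$ of each $\Delta_j$; dividing the product sections by the vertex monomial $\prod_j x_{W_j}$ yields the characters $\sum_j(m_{J_j}-m_{W_j})\in\sigma^\vee\cap M(A_{n-1})$, and because passing to an adjacent vertex of the permutohedron (an adjacent transposition) alters the supporting vertex of exactly one hypersimplex, the primitive edge generators of the smooth cone $\sigma^\vee$ all occur among these characters. Thus the product sections generate $\O(U_\sigma)$ on each affine chart, so $f$ is a closed immersion; being proper it is a closed embedding. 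The main obstacle is precisely this separation statement, i.e.\ controlling which characters the product sections realize on each chart; the smoothness (Delzant) property of the permutohedron together with the edge/adjacent-transposition bookkeeping is what makes it work.

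Finally, for part (4) I would identify the image with the toric variety of the point configuration $\{m_J\}$, where $m_J=\sum_{i\in J}u_i-\sum_{k=1}^{|J|}u_{i_k}$ is the lattice point attached to $x_J$. Two multihomogeneous monomials $\prod_i x_{J_i}$ and $\prod_i x_{J'_i}$ define the same section exactly when $\sum_i m_{J_i}=\sum_i m_{J'_i}$; once the multidegrees agree, so that the translation terms cancel, this is equivalent to $\sum_i\chi_{J_i}=\sum_i\chi_{J'_i}$, which forces $|J_i|=|J'_i|$ after matching sizes. These binomials therefore vanish on $X(A_{n-1})$, and since the embedding is projectively normal (ample on a smooth toric variety), the multihomogeneous ideal of the image is the toric ideal of the configuration, generated by all binomials $\prod_i x_{J_i}-\prod_i x_{J'_i}$ with $\sum_i\chi_{J_i}=\sum_i\chi_{J'_i}$. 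This is the asserted equation \eqref{eq:X(A)subprodP}.
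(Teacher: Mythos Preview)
Your argument is correct, and its core coincides with the paper's: both reduce the closed-embedding claim to checking, on each affine chart $U_\sigma$ attached to an ordering of $\{1,\dots,n\}$, that the characters realised by the (ratios of) product sections generate the semigroup algebra $\O(U_\sigma)$. The paper reaches this point more directly: it writes each $\phi_j$ as the toric morphism coming from the lattice map $N(A_{n-1})\to N(A_{n-1})_j$, $v_i\mapsto\sum_{i\in J}v_J$, observes that the preimage of the product cone $\sigma_{\{i_n\}}\times\sigma_{\{i_n,i_{n-1}\}}\times\cdots$ is exactly the maximal braid cone for the order $i_1,\dots,i_n$, and then checks surjectivity of the corresponding ring maps. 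Your detour through the Minkowski sum $\sum_j\Delta_j=\Delta(A_{n-1})$ and ampleness of $\bigotimes_j\mathscr L_j$ is geometrically illuminating (and is in fact the content of the Remark following the proposition), but it is not logically needed: once you do the chart-by-chart check, you have the closed embedding without ever invoking very ampleness.

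For the equations, the paper simply computes the kernel of the surjection $\bigoplus_j M(A_{n-1})_j\to M(A_{n-1})$, $u_J\mapsto\sum_{i\in J}u_i$, and notes that the resulting binomials cut out the torus-orbit closure. Your appeal to ``projective normality'' is slightly off-key here: in a product of projective spaces the relevant statement is that the multihomogeneous ideal of the closure of a subtorus is always the binomial (toric) ideal of the kernel --- this holds for any toric embedding by lattice-point sections and does not require normality of the section ring. The conclusion is the same, but the paper's lattice-kernel formulation is the cleaner justification.
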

\begin{proof}
That $\mathscr L_j$ is generated by global sections $(x_J)_{|J|=j}$ 
and determines a projective toric morphism follows from the 
fact that $\mathscr L_j$ can be reconstructed from the polytope 
$\Delta_j(A_{n-1})$, see remark \ref{rem:L_j-Delta_j}. This morphism
is birational onto its image since the polytope is full-dimensional.
We describe these morphisms in terms of the corresponding maps of fans.

For the toric variety $\P(\langle x_J:|J|=j\rangle)$ we have the
character lattice $M(A_{n-1})_j\subset\bigoplus_{|J|=j}\Z u_J$ 
generated by differences $u_J-u_{J'}$ and the dual lattice
$N(A_{n-1})_j=(\bigoplus_{|J|=j}\Z v_J)/(\sum_Jv_J)$.
The fan of $\P(\langle x_J:|J|=j\rangle)$ has the one-dimensional
cones generated by the $v_J$.
The morphism $X(A_{n-1})\to\P(\langle x_J:|J|=j\rangle)$ is 
determined by the map of lattices $M(A_{n-1})_j\to M(A_{n-1})$,
$u_J\mapsto\sum_{i\in J}u_i$, or dually
$N(A_{n-1})\to N(A_{n-1})_j$, $v_i\mapsto\sum_{i\in J}v_J$,
which defines a map of fans.

The product of these morphisms is given by the map of lattices
$\bigoplus_{j=1}^nM(A_{n-1})_j\to M(A_{n-1})$ with kernel
generated by elements of the form $\sum_in_{J_i}-\sum_in_{J'_i}$ 
such that $|J_i|=|J'_i|$ and $\sum_i\chi_{J_i}=\sum_i\chi_{J'_i}$. 
This gives rise to the homogeneous equations.

The maximal cones of the fan of $\P(\langle x_J:|J|=j\rangle)$
are the cones $\sigma_J$ generated by $\{v_{J'}\:|\:J'\neq J,|J'|=j\}$.
For an ordering $i_1,\ldots,i_n$ of $\{1,\ldots,n\}$ the preimage 
of the maximal cone 
$\sigma_{\{i_n\}}\times\sigma_{\{i_n,i_{n-1}\}}\times\ldots\times
\sigma_{\{i_n,\ldots,i_2\}}$ of the product fan is the
maximal cone of $\Sigma(A_{n-1})$ generated by
$v_{i_1},v_{i_1}\!+v_{i_2},\ldots,v_{i_1}\!+\ldots+v_{i_{n-1}}$,
thus the open sets corresponding to maximal cones of this form 
cover the image of $X(A_{n-1})$.
The corresponding maps of coordinate algebras are surjective,
so the morphism is a closed embedding.
\end{proof}

\begin{rem}
The $(n-1)$-dimensional permutohedron, usually defined in 
an $n$-dimensional vector space as convex hull of the orbit of 
$(n,n\!-\!1,\ldots,1)$ under the action of the symmetric group $S_n$ 
permuting the given basis (cf.\ for example \cite[(4.3.10)]{Ka93}), 
can be considered as a lattice polytope in $M(A_{n-1})_\Q\subset\Q^n$ 
after a translation moving one of its vertices, specified by 
fixing an ordering $i_1,\ldots,i_n$ of the set $\{1,\ldots,n\}$, 
to the origin:
\[\textstyle
\Delta(A_{n-1})=\conv\big\{\sum_{k=1}^{n-1}(n-k)u_{\sigma(k)}
-\sum_{k=1}^{n-1}(n-k)u_{i_k}:\sigma\in S_n\big\}.
\]
We have Minkowski sum decompositions of the permutohedron, first 
\[\textstyle
\Delta(A_{n-1})=\sum_{k<j}l_{i_ji_k}
\]
into line segments $l_{ij}=\{r\cdot\beta_{ij}\:|\:0\leq r\leq 1\}$ 
corresponding to the line bundles $\mathscr L_{\{\pm\beta_{ij}\}}$
forming the universal $A_{n-1}$-data 
(choosing $\O_{X(A_{n-1})}(\sum_{i_k\in I,i_j\not\in I}D_I)$ 
in the isomorphism class of $\mathscr L_{\{\pm\beta_{i_ji_k}\}}$
if $k<j$), and second
\[\Delta(A_{n-1})=\Delta_1(A_{n-1})+\ldots+\Delta_{n-1}(A_{n-1})\]
into the polytopes corresponding to the line bundles $\mathscr L_j$.
\end{rem}

\begin{rem}
The closed embedding (\ref{eq:embX(A)prodP}) together with the 
functor of projective spaces gives another description of 
the functor of the toric variety $X(A_{n-1})$.
We have a contravariant functor on the category of schemes: 
its data on a scheme $Y$ are line bundles with generating sections 
$(\mathscr L_j,(x_J)_{|J|=j})_{j=1,\ldots,n-1}$ 
up to isomorphism such that the sections satisfy the relations  
{\rm(\ref{eq:X(A)subprodP})}, and for morphisms of schemes we 
have the pull-back of line bundles with sections.
We call the data on $X(A_{n-1})$ introduced in definition
\ref{def:linebundlesL_j} the universal data on $X(A_{n-1})$.
Then, the toric variety $X(A_{n-1})$ together with the universal
data represents this functor. 
Further, the method of definition \ref{def:linebundlesL_j}
applied to $\Sigma(A_{n-1})$-data over arbitrary schemes
gives a morphism from $C_{\Sigma(A_{n-1})}$ to this functor,
mapping the universal $\Sigma(A_{n-1})$-collection to the universal 
data. As in the proof of proposition \ref{prop:SigmaAdata-Adata} 
this implies that we have an isomorphism of functors.
\end{rem}

The following observation can be directly calculated from the 
definition of the line bundles $\mathscr L_j$.

\begin{lemma}
We have isomorphisms
\begin{equation}\label{eq:isomL_jL_J}
\textstyle
\mathscr L_{j-1}^{\otimes\,\minus1}\otimes
\mathscr L_j^{\otimes 2}\otimes\mathscr L_{j+1}^{\otimes\,\minus1}
\;\cong\;\bigotimes_{|J|=n-j}\mathscr L_J
\end{equation}
where we set $\mathscr L_0=\mathscr L_n=\O_{X(A_{n-1})}$.
\end{lemma}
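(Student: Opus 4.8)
The plan is to reduce the claimed isomorphism of line bundles to an equality of divisor classes and to exhibit the difference of the two associated divisors as the divisor of a character. By Remark \ref{rem:L_j-Delta_j} each $\mathscr L_j$ is $\O_{X(A_{n-1})}\big(\sum_I d_I^{(j)}D_I\big)$ with $d_I^{(j)}=|\{i_1,\ldots,i_j\}\cap I|-\max\{0,|I|+j-n\}$, the sum running over $\emptyset\neq I\subsetneq\{1,\ldots,n\}$. I would first check that this formula also yields $d_I^{(0)}=d_I^{(n)}=0$ for every such $I$, so that it is consistent with the convention $\mathscr L_0=\mathscr L_n=\O_{X(A_{n-1})}$ and the computation runs uniformly over $j=1,\ldots,n-1$. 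Then the left-hand side of (\ref{eq:isomL_jL_J}) is $\O_{X(A_{n-1})}\big(\sum_I e_I D_I\big)$ with $e_I=-d_I^{(j-1)}+2d_I^{(j)}-d_I^{(j+1)}$, while the right-hand side is $\O_{X(A_{n-1})}\big(\sum_{|I|=n-j}D_I\big)$, so it suffices to compare the two coefficient systems.

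Next I would split $d_I^{(j)}=a_j-m_j$ into $a_j=|\{i_1,\ldots,i_j\}\cap I|$ and $m_j=\max\{0,|I|+j-n\}$ and compute the two discrete second differences separately. Since $a_j-a_{j-1}$ is the indicator that $i_j\in I$, telescoping gives $-a_{j-1}+2a_j-a_{j+1}=[\,i_j\in I\,]-[\,i_{j+1}\in I\,]$. The function $m_j$ is convex and piecewise linear in $j$ with a single kink at $j=n-|I|$, so its second difference vanishes away from the kink and equals $1$ there; since $1\le n-|I|\le n-1$ the kink always lies in the relevant range, and one obtains $-m_{j-1}+2m_j-m_{j+1}=-[\,|I|=n-j\,]$. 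Adding the two contributions yields $e_I=[\,i_j\in I\,]-[\,i_{j+1}\in I\,]+[\,|I|=n-j\,]$.

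The decisive step is then to recognise the leftover term $[\,i_j\in I\,]-[\,i_{j+1}\in I\,]$ as the coefficient of $D_I$ in $\divis(x^{\beta_{i_ji_{j+1}}})$: this coefficient is $+1$ when $i_j\in I$, $i_{j+1}\notin I$ and $-1$ when $i_j\notin I$, $i_{j+1}\in I$, which is exactly the divisor $\sum_{i_j\in I,\,i_{j+1}\notin I}D_I-\sum_{i_j\notin I,\,i_{j+1}\in I}D_I$ trivialised by the isomorphism $c_{i_ji_{j+1}}$ of the universal $\Sigma(A_{n-1})$-collection. Hence $\sum_I e_I D_I=\sum_{|I|=n-j}D_I+\divis(x^{\beta_{i_ji_{j+1}}})$, the two divisors differ by a principal divisor, and multiplication by $x^{\beta_{i_ji_{j+1}}}$ supplies the desired isomorphism $\mathscr L_{j-1}^{\otimes\,\minus1}\otimes\mathscr L_j^{\otimes2}\otimes\mathscr L_{j+1}^{\otimes\,\minus1}\cong\bigotimes_{|J|=n-j}\mathscr L_J$.

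I expect the computation itself to be entirely routine, as the statement of the lemma already anticipates. The only points needing care are the bookkeeping at the boundary indices $j=1$ and $j=n-1$, where the convention $\mathscr L_0=\mathscr L_n=\O_{X(A_{n-1})}$ enters, and the verification that the second-difference of the convex function $m_j$ behaves as claimed at its kink. The one genuinely conceptual observation, rather than a mechanical manipulation, is identifying the residual discrepancy with $\divis(x^{\beta_{i_ji_{j+1}}})$; once this is in place the principality of the difference, and hence the isomorphism, is immediate.
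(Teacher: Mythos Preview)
Your proposal is correct and is exactly the direct calculation the paper alludes to (the paper gives no proof beyond ``can be directly calculated from the definition of the line bundles $\mathscr L_j$''). Your splitting $d_I^{(j)}=a_j-m_j$, the computation of the two discrete second differences, and the identification of the residual $[\,i_j\in I\,]-[\,i_{j+1}\in I\,]$ with the coefficient of $D_I$ in $\divis(x^{\beta_{i_ji_{j+1}}})$ are all correct, including the boundary checks at $j=0,n$.
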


\begin{defi}
We define the divisors $C_1,\ldots,C_{n-1}$ and $D_1,\ldots,D_{n-1}$
on $X(A_{n-1})$.
Let $D_j=\sum_{|J|=n-j}D_J$ and let $C_j$ be the zero divisor of the 
section $\sum_{|J|=j}x_J$ of the line bundle $\mathscr L_j$.
\end{defi}

\begin{rem}
We may write the isomorphism (\ref{eq:isomL_jL_J}) as 
linear equivalence of divisors
\[2C_j-C_{j-1}-C_{j+1}\;\sim\;D_j\,.\]
The rational function
\begin{equation}\label{eq:ratfunctCD}
\frac{(\sum_{|I|=j-1}x_I)(\sum_{|I|=j+1}x_I)}{(\sum_{|I|=j}x_I)^2}
\end{equation}
has divisor $D_j+C_{j-1}+C_{j+1}-2C_j$.
\end{rem}

\begin{lemma}\label{le:CDempty}
For $j=1,\ldots,n\!-\!1$ we have $C_j\cap D_j=\emptyset$.
\end{lemma}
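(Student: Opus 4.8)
The plan is to prove the statement divisor by divisor: since $C_j$ is the zero locus of the section $s=\sum_{|J|=j}x_J$ of $\mathscr L_j$ and $D_j=\sum_{|K|=n-j}D_K$, it suffices to show that $s$ is nowhere vanishing on each prime divisor $D_K$ with $|K|=n-j$. The key input is the order of vanishing of each basis section $x_J$ along a torus invariant prime divisor $D_K$. Writing $v_K=\sum_{i\in K}v_i$ for the ray generator attached to $D_K$, and using that $x_J$ corresponds to the vertex $\sum_{i\in J}u_i-\sum_{k=1}^ju_{i_k}$ of $\Delta_j(A_{n-1})$ (remark \ref{rem:L_j-Delta_j}), one computes
\[
\mathrm{ord}_{D_K}(x_J)=|J\cap K|-\max\{0,|K|+j-n\},
\]
the contribution of the chosen ordering $i_1,\dots,i_j$ cancelling between the divisor $\sum_K d_KD_K$ of $\mathscr L_j$ and the character attached to $x_J$.

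First I would restrict to a $D_K$ with $|K|=n-j$. Here $|K|+j-n=0$, so $\mathrm{ord}_{D_K}(x_J)=|J\cap K|$, which is $\geq 1$ unless $J\cap K=\emptyset$; since $|J|=j=|\{1,\dots,n\}\setminus K|$, the latter happens exactly for $J=\{1,\dots,n\}\setminus K$. Thus every summand $x_J$ with $J\neq\{1,\dots,n\}\setminus K$ vanishes identically along $D_K$, and the restriction of $s=\sum_{|J|=j}x_J$ to $D_K$ equals the restriction of the single section $x_{\{1,\dots,n\}\setminus K}$. It therefore suffices to prove that $x_{\{1,\dots,n\}\setminus K}$ is nowhere zero on $D_K$.

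Next I would identify the zero locus of $x_{J_0}$, where $J_0=\{1,\dots,n\}\setminus K$, as the union of the $D_L$ along which it vanishes, and show none of them meets $D_K$. The formula gives $\mathrm{ord}_{D_L}(x_{J_0})=|J_0\cap L|-\max\{0,|L|+j-n\}$, and the main point is a short case check. If $L\subseteq K$ then $J_0\cap L=\emptyset$ and, as $|L|\leq n-j$, the order is $0$; if $K\subseteq L$ then $|J_0\cap L|=|L\setminus K|=|L|+j-n=\max\{0,|L|+j-n\}$, so again the order is $0$. Hence $x_{J_0}$ can only vanish along some $D_L$ with $L$ incomparable to $K$. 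In the fan $\Sigma(A_{n-1})$ a set of rays spans a cone precisely when the indexing subsets form a chain, so $D_L\cap D_K\neq\emptyset$ forces $L$ and $K$ comparable; therefore $D_L\cap D_K=\emptyset$ for every $L$ incomparable to $K$, and the zero locus of $x_{J_0}$ is disjoint from $D_K$.

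Combining the last two steps, $s$ restricts to the nowhere-vanishing section $x_{J_0}|_{D_K}$, so $C_j\cap D_K=\emptyset$ for every $K$ with $|K|=n-j$, giving $C_j\cap D_j=\emptyset$. The only real obstacle is bookkeeping: getting the order-of-vanishing formula right (in particular seeing the ordering drop out) and carrying out the two-line comparability case check; once these are in place, the disjointness of incomparable boundary divisors finishes the argument. Note that the linear equivalence $2C_j-C_{j-1}-C_{j+1}\sim D_j$ and the rational function (\ref{eq:ratfunctCD}) are consistent with this conclusion but do not by themselves yield set-theoretic disjointness, which is why I would argue directly on the divisors $D_K$.
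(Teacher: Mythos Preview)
Your argument is correct and complete. The order-of-vanishing formula is right (the contribution from the fixed ordering $i_1,\ldots,i_j$ indeed cancels between the divisor defining $\mathscr L_j$ and the character attached to $x_J$), the identification of the unique surviving summand $x_{J_0}$ with $J_0=\{1,\ldots,n\}\setminus K$ is correct, and your comparability case check pinning down the zero locus of $x_{J_0}$ to divisors $D_L$ with $L$ incomparable to $K$ is clean and accurate. The final step uses the standard fact that in $\Sigma(A_{n-1})$ two rays span a cone precisely when the indexing subsets are nested, so incomparable boundary divisors are disjoint.

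The paper takes a different route: rather than a global divisor-by-divisor analysis, it simply says the claim can be checked locally on the affine cover $(W_\sigma)_{\sigma\in S_n}$ of remark \ref{rem:coveringWsigma}. On each chart $W_\sigma$ only one component of $D_j$ survives (namely $D_{\{\sigma(n),\ldots,\sigma(j+1)\}}$, cut out by a single coordinate of the polynomial ring), and one verifies directly that $\sum_{|J|=j}x_J$ is a unit along it. Your approach is more conceptual and coordinate-free: it isolates exactly which monomial section survives on each $D_K$ and uses the fan combinatorics to dispose of its zero locus, so it explains \emph{why} the intersection is empty rather than just confirming it chart by chart. The paper's approach has the virtue of being immediate once the charts are set up, and it dovetails with the local calculations already needed in construction \ref{con:SigmaAdata-curve} and theorem \ref{thm:datamorphLM}; yours has the virtue of not requiring one to unwind the affine coordinates at all.
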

\begin{proof}
Can easily be checked locally using the covering of the following remark.
\end{proof}

\begin{rem}\label{rem:coveringWsigma}
Given a $\Sigma(A_{n-1})$-collection 
$((\mathscr L_I,w_I)_I,(c_{ij})_{ij})$ on a scheme $Y$, 
by nondegeneracy we have the following covering of $Y$ 
by open subschemes: for a permutation $\sigma\in S_n$ set 
$\mathcal I_\sigma\!=\!\big\{\{\sigma(n)\},
\{\sigma(n),\sigma(n\!-\!1)\},
\ldots,\{\sigma(n),\ldots,\sigma(2)\}\big\}$
and let $W_\sigma$ be the open subscheme of $Y$ where $w_I\neq0$ 
for $I\not\in\mathcal I_\sigma$.

In the case of the universal $\Sigma(A_{n-1})$-collection on 
$X(A_{n-1})$ the subscheme $W_\sigma\subset X(A_{n-1})$ 
corresponds to the maximal cone 
$\langle v_{\sigma(n)},\ldots,
v_{\sigma(n)}+\ldots+v_{\sigma(2)}\rangle 
\subset N(A_{n-1})_\Q$ dual to the cone generated by the simple roots 
$u_{\sigma(n)}-u_{\sigma(n-1)},\ldots,u_{\sigma(2)}-u_{\sigma(1)}$
and has as coordinate algebra the polynomial ring generated by
$\frac{x_{\sigma(n)}}{x_{\sigma(n-1)}},\ldots,
\frac{x_{\sigma(2)}}{x_{\sigma(1)}}$.
\end{rem}

By \cite[Thm.\ 3.19]{BB11a} there is an isomorphism between the
functor $F_{A_{n-1}}$ and the moduli functor of 
$n$-pointed chains of $\P^1$ $\overline{L}_n$  
mapping the universal $A_{n-1}$-data to the universal 
$n$-pointed chain
$(X(A_n)\to X(A_{n-1}),s_-,s_+,s_1,\ldots,s_n)$ defined in 
\cite[Con.\ 3.6]{BB11a}. This means that the toric variety 
$X(A_{n-1})$ coincides with the Losev-Manin moduli space 
$\overline{L}_n$ (we use the same symbol for the functor and
the moduli space). The construction uses an embedding of 
$n$-pointed chains into $(\P^1)^n$.

\medskip

This also implies that there is an isomorphism between the functor 
$C_{\Sigma(A_{n-1})}$ and the moduli functor $\overline{L}_n$ compatible 
with the other isomorphisms of functors. We make this isomorphism  
explicit using an embedding of $n$-pointed chains into $\P^n$.

\begin{con}\label{con:SigmaAdata-curve}
Let $((\mathscr L_I,w_I)_I,(c_{ij})_{ij})$ be a 
$\Sigma(A_{n-1})$-collection over a scheme $Y$. We construct an 
$n$-pointed chain of $\P^1$ $(C\to Y,s_-,s_+,s_1,\ldots,s_n)$
using the covering of $Y$ by $(W_\sigma)_{\sigma\in S_n}$ 
(see remark \ref{rem:coveringWsigma}).

For $\sigma\!\in\!S_n\!$ the restricted $\Sigma(A_{n-1})$-collection 
$((\mathscr L_I|_{W_\sigma},w_I|_{W_\sigma}\!)_I,
(c_{ij}|_{W_\sigma}\!)_{ij})$ 
is isomorphic to a $\Sigma(A_{n-1})$-collection
$((\mathscr L_I^\sigma,w_I^\sigma)_I,(c_{ij}^\sigma)_{ij})$ on $W_\sigma$
with the property 
$(\mathscr L_I^\sigma,w_I^\sigma)=(\O_{W_\sigma},1)$
for $I\not\in\mathcal I_\sigma$,
and for $i=1,\ldots,n-1$ we have isomorphisms 
$c^\sigma_{\sigma(i+1),\sigma(i)}:\linebreak
\mathscr L_{\{\sigma(n),\ldots,\sigma(i+1)\}}^\sigma\to\O_{W_\sigma}$.
Let $w_i^\sigma\in\O_{W_\sigma}(W_\sigma)$ be the image of
$w_{\{\sigma(n),\ldots,\sigma(i+1)\}}$.
Equivalently, we can use the restricted original data
$((\mathscr L_I|_{W_\sigma},w_I|_{W_\sigma})_I,(c_{ij}|_{W_\sigma})_{ij})$
and the image of the respective product of the restricted 
$w_I$'s under $c_{\sigma(i+1),\sigma(i)}|_{W_\sigma}$.

From these functions $w_1^\sigma,\ldots,w_{n-1}^\sigma$ we construct
an $n$-pointed chain over $W_\sigma$ embedded in the projective space
$\P^n_{W_\sigma}$ with homogeneous coordinates $y_0,\ldots,y_n$.
Let $C_\sigma$ be the subscheme of $\P^n_{W_\sigma}$ defined by 
the equations $y_iy_{j+1}=w_{i+1}^\sigma\cdots w_j^\sigma y_{i+1}y_j$
for $0\leq i<j<n$ (cf.\ construction \ref{con:data->curve}), the 
sections $s_{\sigma(i)}^\sigma$ defined by the additional equation 
$y_{i-1}=y_i\neq 0$, and let $s_-^\sigma,s_+^\sigma$ be the sections 
$(1\!:\!0:\ldots:\!0),(0\!:\ldots:\!0\!:\!1)$. 

These $n$-pointed chains $(C_\sigma\to W_\sigma,s_-^\sigma,s_+^\sigma,
s_1^\sigma,\ldots,s_n^\sigma)$ can be glued to an $n$-pointed chain
$(C\to Y,s_-,s_+,s_1,\ldots,s_n)$ over $Y$.
\end{con}

\begin{prop}\label{prop:SigmaAdata-curve}
Construction \ref{con:SigmaAdata-curve} is valid and defines an 
isomorphism between the functor $C_{\Sigma(A_{n-1})}$ and the 
moduli functor of $n$-pointed chains of $\P^1$ mapping 
the universal $\Sigma(A_{n-1})$-collection
$((\O_{X(A_{n-1})}(D_I),w_I)_I,(c_{ij})_{ij})$ 
to the universal $n$-pointed chain 
$(X(A_n)\to X(A_{n-1}),s_-,s_+,s_1,\ldots,s_n)$.
\end{prop}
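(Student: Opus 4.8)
The plan is to follow the same two-step strategy as in the proof of Proposition~\ref{prop:SigmaAdata-Adata}. First I would check that Construction~\ref{con:SigmaAdata-curve} is well defined and yields a morphism of functors $C_{\Sigma(A_{n-1})}\to\overline{L}_n$; then I would identify this morphism with the composite isomorphism $C_{\Sigma(A_{n-1})}\stackrel{\sim}{\longrightarrow}F_{A_{n-1}}\stackrel{\sim}{\longrightarrow}\overline{L}_n$ supplied by Proposition~\ref{prop:SigmaAdata-Adata} and \cite[Thm.\ 3.19]{BB11a}. Since that composite is already known to be an isomorphism, the whole statement reduces, exactly as before, to verifying that the morphism defined by the construction sends the universal $\Sigma(A_{n-1})$-collection to the universal $n$-pointed chain $(X(A_n)\to X(A_{n-1}),s_-,s_+,s_1,\ldots,s_n)$: by Yoneda this forces the morphism of functors to be the identity under the two identifications with $\Mor(\,\cdot\,,X(A_{n-1}))$, hence an isomorphism, and the universal objects then correspond as asserted.

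For well-definedness I would argue as follows. On each $W_\sigma$ the equations $y_iy_{j+1}=w_{i+1}^\sigma\cdots w_j^\sigma\,y_{i+1}y_j$ cut out a chain of $\P^1$ in $\P^n_{W_\sigma}$ precisely as in Construction~\ref{con:data->curve}, so it only remains to see that the prescribed sections $s_-^\sigma,s_+^\sigma$ and the $s_{\sigma(i)}^\sigma$ (given by $y_{i-1}=y_i\neq0$) are pairwise disjoint, meet every component, and avoid the poles and nodes; this is guaranteed by the nondegeneracy encoded in the covering $(W_\sigma)_{\sigma\in S_n}$ of Remark~\ref{rem:coveringWsigma}. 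The genuinely technical point is the gluing: over an overlap $W_\sigma\cap W_\tau$ the two trivialisations differ by the transition data of the collection, so $w_i^\sigma$ and $w_i^\tau$ are related through the isomorphisms $c_{ij}$, and I would show that the induced linear change of the homogeneous coordinates $y_0,\ldots,y_n$ of $\P^n$ carries $C_\sigma$ to $C_\tau$ together with all of the marked points; checking the cocycle condition on triple overlaps is then routine bookkeeping. Compatibility with pullback along $f\colon Y'\to Y$ is formal, since pullback commutes both with the chosen trivialisations and with the formation of the defining equations.

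The main obstacle is the verification on the universal object. Here I would compute the functions $w_i^\sigma$ attached to the universal $\Sigma(A_{n-1})$-collection on $W_\sigma\subset X(A_{n-1})$; by the description of $W_\sigma$ in Remark~\ref{rem:coveringWsigma} these are expressible through the quotients $x_{\sigma(i+1)}/x_{\sigma(i)}$, and the task is to match the embedded chain $C_\sigma\subset\P^n_{W_\sigma}$ with the restriction to $W_\sigma$ of the universal chain of \cite[Con.\ 3.6]{BB11a}. The delicate point is that the latter is built from an embedding into $(\P^1)^n$, whereas Construction~\ref{con:SigmaAdata-curve} embeds into $\P^n$; reconciling the two is the crux. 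I would do this by comparing the Veronese-type coordinates $y_0,\ldots,y_n$ with the $n$ coordinates supplied by the morphisms $\phi_{\{\pm\beta_{ij}\}}$, using the relations $x^{\beta_{ij}}=x_i/x_j$ and the homogeneous equations~(\ref{eq:X(A)subprodP}) to pass between the two pictures. Once this identification of universal chains is established, the first paragraph shows that the construction defines a morphism of functors agreeing with the known composite isomorphism, completing the proof.
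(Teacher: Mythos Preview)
Your overall strategy---show the construction defines a morphism of functors and then identify it with the composite $C_{\Sigma(A_{n-1})}\stackrel{\sim}{\to}F_{A_{n-1}}\stackrel{\sim}{\to}\overline{L}_n$---is the same as the paper's. The difference is in how the identification is carried out, and here the paper's route is more economical than yours.

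You separate the work into two independent tasks: (a) explicit gluing on overlaps $W_\sigma\cap W_\tau$ via a linear change of the $y_i$, and (b) a direct comparison of the $\P^n$-embedded chain with the $(\P^1)^n$-embedded universal chain of \cite[Con.~3.6]{BB11a}. The paper instead performs a single local computation that does both jobs at once: over each $W_\sigma$ it extracts the $A_{n-1}$-data from the locally constructed $n$-pointed chain (via the projections $C_\sigma\to\P^1_{W_\sigma}$ given by $1,\,y_i/y_{i-1}$, following \cite[Section 3.3]{BB11a}) and checks that $(t_{-\beta_{\sigma(i),\sigma(i+1)}}:t_{\beta_{\sigma(i),\sigma(i+1)}})=(w_i^\sigma:1)$ agrees with what Proposition~\ref{prop:SigmaAdata-Adata} assigns to the restricted $\Sigma(A_{n-1})$-collection. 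Because $F_{A_{n-1}}\cong\overline{L}_n$, this immediately forces the local chains to be the local pieces of a single global chain (so gluing is automatic, no transition maps or cocycle checks needed) and simultaneously shows that the composite $C_{\Sigma(A_{n-1})}\to\overline{L}_n\to F_{A_{n-1}}$ equals the isomorphism of Proposition~\ref{prop:SigmaAdata-Adata}. In particular, the ``crux'' you flag---reconciling the $\P^n$ and $(\P^1)^n$ embeddings of the universal chain---is precisely what the paper sidesteps: it never compares the two embeddings directly, only the $A_{n-1}$-data they produce. Your approach would work, but involves more bookkeeping; the paper's shortcut is worth knowing.
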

\begin{proof}
Given $\Sigma(A_{n-1})$-data over a scheme $Y$, it is easy to show 
that construction \ref{con:SigmaAdata-curve} locally over 
the open subschemes $W_\sigma\subseteq Y$ defines $n$-pointed chains of 
$\P^1$ (compare also to construction \ref{con:data->curve}).

We show that, applying the isomorphism of functors $\overline{L}_n
\to F_{A_{n-1}}$ to these objects over $W_\sigma$, we obtain
$A_{n-1}$-data which coincide with the data we get by applying the 
functor $C_{\Sigma(A_{n-1})}\to F_{A_{n-1}}$ to the restricted data. 
According to \cite[Section 3.3]{BB11a} we extract $A_{n-1}$-data from 
an $n$-pointed chain 
$(C_\sigma\to W_\sigma,s_-^\sigma,s_+^\sigma,s_1^\sigma,\ldots,s_n^\sigma)$
via projections to $\P^1_{W_\sigma}$ such that $s_-^\sigma,s_+^\sigma$ 
become the $(1\!:\!0),(0\!:\!1)$-section and a given section $s_i^\sigma$ 
becomes the section $(1\!:\!1)$. 
In the present case for $i=1,\ldots,n$ the morphism determined by the 
rational functions $1,y_i/y_{i-1}$ restricted to the component of
$C_\sigma$ containing $s^\sigma_{\sigma(i)}$ after contracting
the other components transforms the sections 
$s_-^\sigma,s_+^\sigma,s_{\sigma(i)}^\sigma$ into the 
$(1\!:\!0),(0\!:\!1),(1\!:\!1)$-sections. 
For $n=1,\ldots,n\!-\!1$ the section $s^\sigma_{\sigma(i+1)}$ becomes the 
section $(w_i^\sigma\!:\!1)$, and this gives 
$(t_{-\beta_{\sigma(i),\sigma(i+1)}}\!:t_{\beta_{\sigma(i),\sigma(i+1)}})
=(w_i^\sigma\!:\!1)$ which coincides with the data obtained via 
proposition \ref{prop:SigmaAdata-Adata}.

Thus, the chains over the open subschemes $W_\sigma$ can be glued 
to an $n$-pointed chain over the scheme $Y$ and construction
\ref{con:SigmaAdata-curve} defines a morphism of functors 
$C_{\Sigma(A_{n-1})}\to\overline{L}_n$, such that its composition
with $\overline{L}_n\to F_{A_{n-1}}$ coincides with the isomorphism
of functors $C_{\Sigma(A_{n-1})}\to F_{A_{n-1}}$ defined in 
proposition \ref{prop:SigmaAdata-Adata}.
Since the other morphisms of functors are isomorphisms and map the 
given universal objects to the given universal objects, this is also 
true for $C_{\Sigma(A_{n-1})}\to\overline{L}_n$.
\end{proof}

\begin{thm}\label{thm:datamorphLM}
The morphism $\overline{L}_n\to\overline{\mathcal L}_n$ 
that arises by forgetting the labels of the $n$ sections
is given by the following $\mathbf\Upsilon(A_{n-1})$-collection
on $\overline{L}_n=X(A_{n-1})$: for $i=1,\ldots,n\!-\!1$ let 
$\mathscr L_{\rho_i}=\O_{\overline{L}_n}(C_i)$ and 
$\mathscr L_{\tau_i}=\O_{\overline{L}_n}(D_i)$,
let\\[-5mm] 
\begin{center}
$c_i:\;\;\mathscr L_{\tau_i}\mathscr L_{\rho_{i-1}}
\mathscr L_{\rho_i}^{\otimes\,\minus2}\,\mathscr L_{\rho_{i+1}}\!
=\O_{\overline{L}_n}(D_i+C_{i-1}\!-2C_i+C_{i+1})
\;\;\to\;\;\O_{\overline{L}_n}$
\end{center}\ \\[-4mm]
be given by multiplication by the rational function 
{\rm(\ref{eq:ratfunctCD})}, and let the sections $a_i,b_i$ 
be defined as the images of the $1$-sections under the inclusions 
$\O_{\overline{L}_n}\to\mathscr L_{\rho_i}$,
$\O_{\overline{L}_n}\to\mathscr L_{\tau_i}$.
\end{thm}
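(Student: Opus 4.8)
The plan is to exploit the equivalence $\overline{\mathcal L}_n\cong\mathcal C_{\mathbf\Upsilon(A_{n-1})}$ established in the proof of theorem \ref{thm:modulistack-toricstack}: under this equivalence the morphism $\overline{L}_n\to\overline{\mathcal L}_n$ corresponds to the $\mathbf\Upsilon(A_{n-1})$-collection $\Phi(\mathscr C)$ obtained by applying the functor $\Phi$ of construction \ref{con:curve->data} to the universal degree-$n$-pointed chain $\mathscr C$ on $\overline{L}_n=X(A_{n-1})$. Here $\mathscr C$ arises from the universal $n$-pointed chain of proposition \ref{prop:SigmaAdata-curve} by forgetting the labels of the sections, that is, by replacing $s_1,\ldots,s_n$ with the relative effective Cartier divisor $S=s_1+\ldots+s_n$. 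So the whole argument amounts to computing $\Phi(\mathscr C)$ explicitly and matching it with the collection in the statement. Since the embedding into $\P^n$ determined by $\O_C(S)$ is unchanged by forgetting labels, I would read off the collection directly from the $\P^n$-embedding of construction \ref{con:SigmaAdata-curve}.

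First I would treat the $\tau$-part, which is the easier one. On each chart $W_\sigma$ of the covering of remark \ref{rem:coveringWsigma} the curve is cut out in $\P^n_{W_\sigma}$ by the equations $y_iy_{j+1}=w_{i+1}^\sigma\cdots w_j^\sigma\,y_{i+1}y_j$, so by construction \ref{con:curve->data} the local curve-coefficients $b_i$ coincide with the functions $w_i^\sigma$, the image of $w_{\{\sigma(n),\ldots,\sigma(i+1)\}}$. As the set $\{\sigma(n),\ldots,\sigma(i+1)\}$ has cardinality $n-i$ and, among the $D_J$ with $|J|=n-i$, only $D_{\{\sigma(n),\ldots,\sigma(i+1)\}}$ meets $W_\sigma$, these local functions glue to the canonical section $\prod_{|J|=n-i}w_J$ of $\O_{\overline{L}_n}(D_i)$, where $D_i=\sum_{|J|=n-i}D_J$. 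This identifies $\mathscr L_{\tau_i}=\O_{\overline{L}_n}(D_i)$ with $b_i$ the image of the $1$-section.

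Next I would treat the $\rho$-part. The subscheme $S=\sum_i s_{\sigma(i)}$ is cut on the embedded curve by the additional linear form $\sum_k a_k y_k$, and since the sections $s_{\sigma(i)}$ sit at the points governed by the $w_i^\sigma$ (with the index conventions of construction \ref{con:SigmaAdata-curve}), expanding the degree-$n$ form that vanishes at these $n$ points shows that, up to the chosen trivialisations, the coefficient $a_k$ is the $k$-th elementary symmetric function of the $w_i^\sigma$. Using the torus relations $x_J\sim\prod_{i\in J}x_i$ for the sections of definition \ref{def:linebundlesL_j}, this elementary symmetric function is precisely the local expression of the global section $\sum_{|J|=k}x_J$ of $\mathscr L_k$. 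Since $C_k$ is by definition the zero divisor of $\sum_{|J|=k}x_J$, the $a_k$ glue to the canonical section of $\O_{\overline{L}_n}(C_k)$, giving $\mathscr L_{\rho_i}=\O_{\overline{L}_n}(C_i)$ with $a_i$ the image of the $1$-section.

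Finally, the isomorphisms $c_i$ of the collection are the identities on $\O_{W_\sigma}$ in the local trivialisations of construction \ref{con:curve->data}, so globally they are forced to be the comparison maps $\O_{\overline{L}_n}(D_i+C_{i-1}-2C_i+C_{i+1})\to\O_{\overline{L}_n}$ determined by the gluing data found above. By the linear equivalence $2C_i-C_{i-1}-C_{i+1}\sim D_i$ coming from (\ref{eq:isomL_jL_J}), and since the rational function (\ref{eq:ratfunctCD}) has divisor $D_i+C_{i-1}+C_{i+1}-2C_i$, this comparison map is multiplication by (\ref{eq:ratfunctCD}), which completes the identification. I expect the main obstacle to be the bookkeeping in the $\rho$-part: one must track how the chart-wise trivialisations of $\pi_*\O_C(S)$ rescale the coefficients $a_k,b_k$ by factors $\kappa_k,\lambda_k$ with $\lambda_k=\kappa_k^2/(\kappa_{k-1}\kappa_{k+1})$ as in remark \ref{rem:morphcoll}, and verify that these are exactly the transition data making $(a_k)$ and $(b_k)$ sections of $\O_{\overline{L}_n}(C_k)$ and $\O_{\overline{L}_n}(D_k)$ compatibly with the maps $c_i$, so that the entire collection, and not merely the individual line bundles and sections, agrees with the one in the statement.
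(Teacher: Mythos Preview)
Your overall strategy coincides with the paper's: trivialise on the charts $W_\sigma$, read off local functions $a_i^\sigma,b_i^\sigma$, and identify them as local expressions of the global sections of $\O(C_i),\O(D_i)$. The $\tau$-part is correct and essentially what the paper does. The $\rho$-part, however, contains a genuine error. You write that ``the coefficient $a_k$ is the $k$-th elementary symmetric function of the $w_i^\sigma$'', but the $w_i^\sigma$ are the curve coefficients $b_i^\sigma=x_{\sigma(i+1)}/x_{\sigma(i)}$, not the positions of the sections; the sections $s_{\sigma(i)}$ sit at $y_{i-1}=y_i$, and in the affine coordinate $y_1/y_0$ this is the point $x_{\sigma(1)}/x_{\sigma(i)}$, not $w_i^\sigma$. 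The elementary symmetric functions of the $w_i^\sigma$ (of which there are only $n-1$) do not give the $a_k^\sigma$. What is actually true, and what the paper proves, is that $a_k^\sigma=(\sum_{|I|=k}x_I)/x_{\{\sigma(1),\ldots,\sigma(k)\}}$, i.e.\ $e_k(x_1,\ldots,x_n)/(x_{\sigma(1)}\cdots x_{\sigma(k)})$.

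The substantive work you are skipping is precisely the verification that the divisor $\sum_i s_{\sigma(i)}$ on the embedded curve $C\subset\P^n_{W_\sigma}$ is cut by the hyperplane with these coefficients $a_k^\sigma$. The paper does this by writing out the homogeneous coordinates $y_k(s_{\sigma(i)})=x_{\sigma(i)}^{\,i-k}\,x_{\sigma(1)}\cdots x_{\sigma(k)}/(x_{\sigma(1)}\cdots x_{\sigma(i)})$ of each section and checking the identity $\sum_{k}(-1)^k a_k^\sigma\,y_k(s_{\sigma(i)})=0$ via a telescoping-sum argument; this is the heart of the proof and cannot be replaced by the sentence ``expanding the degree-$n$ form''. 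Once that identity is established, your gluing argument for the $c_i$ goes through. You should also note, as the paper does at the outset, that the data really form an $\mathbf\Upsilon(A_{n-1})$-collection: the nondegeneracy condition is exactly lemma~\ref{le:CDempty} ($C_i\cap D_i=\emptyset$), which you do not invoke.
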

\begin{proof}
The data defined form a $\mathbf\Upsilon(A_{n-1})$-collection,
nondegeneracy follows from $C_i\cap D_i=\emptyset$, see 
lemma \ref{le:CDempty}.

We use the covering by $W_\sigma$, $\sigma\in S_n$ (see remark 
\ref{rem:coveringWsigma}). We have an isomorphism 
$((\mathscr L_{\rho_i},a_i)_i,(\mathscr L_{\tau_i},b_i)_i,
(c_i)_i)|_{W_\sigma}\to((\O_{W_\sigma},a^\sigma_i)_i,
(\O_{W_\sigma},b^\sigma_i)_i,(\id)_i)$ 
of $\mathbf\Upsilon(A_{n-1})$-collections on $W_\sigma$ consisting 
of isomorphisms $\mathscr L_{\tau_i}|_{W_\sigma}\to\O_{W_\sigma}$ 
given by multiplication with $x_{\sigma(i+1)}/x_{\sigma(i)}$ 
(compare to construction \ref{con:SigmaAdata-curve})
and $\mathscr L_{\rho_i}|_{W_\sigma}\to\O_{W_\sigma}$ 
by multiplication with
$(\sum_{|I|=i}x_I)/x_{\{\sigma(1),\ldots,\sigma(i)\}}$.

We show that the degree-$n$-pointed chain constructed from these
data coincides with the degree-$n$-pointed chain that arises by
forgetting the labels of the universal $n$-pointed chain
coming from the universal $\Sigma(A_{n-1})$-collection
by proposition \ref{prop:SigmaAdata-curve}.

Applying construction \ref{con:data->curve} to these data, we get
a chain of $\P^1$ $C\subset\P^n_{W_\sigma}$ defined by 
the functions $b^\sigma_i=x_{\sigma(i+1)}/x_{\sigma(i)}$ and 
a subscheme $S\subset C$ finite of degree $n$ over $W_\sigma$
defined by the functions
$a^\sigma_i=(\sum_{|I|=i}x_I)/x_{\{\sigma(1),\ldots,\sigma(i)\}}$.

Applying construction \ref{con:SigmaAdata-curve} to the universal 
$\Sigma(A_{n-1})$-collection on $X(A_{n-1})$, locally over 
$W_\sigma\subset X(A_{n-1})$ again we get the chain of $\P^1$ 
$C\subset\P^n_{W_\sigma}$ defined by the functions 
$b^\sigma_i=x_{\sigma(i+1)}/x_{\sigma(i)}$. The $n$ sections are
\[\textstyle
s_{\sigma(i)}
=(\;\ldots\::\frac{x_{\sigma(i)}^2}{x_{\sigma(i-2)}x_{\sigma(i-1)}}:
\frac{x_{\sigma(i)}}{x_{\sigma(i-1)}}:1:1:
\frac{x_{\sigma(i+1)}}{x_{\sigma(i)}}:
\frac{x_{\sigma(i+1)}x_{\sigma(i+2)}}{x_{\sigma(i)}^2}:\:\ldots\;)
\] 
in terms the coordinates $y_0,\ldots,y_n$ of $\P^n_{W_\sigma}$,
that is, we have $y_{i-1}(s_{\sigma(i)})=y_i(s_{\sigma(i)})$
which we may set to $1$, and then 
$y_k(s_{\sigma(i)})=x_{\sigma(i)}^{i-k}\,
\frac{x_{\sigma(1)}\cdots x_{\sigma(k)}}
{x_{\sigma(1)}\cdots x_{\sigma(i)}}$.
The sections are contained in the hyperplane 
defined by $\sum_{k=0}^n(-)^ka^\sigma_ky_k=0$ 
(set $a^\sigma_0=a^\sigma_n=1$):\\

$\sum_{k=0}^n(-)^ka_k^\sigma y_k(s_{\sigma(i)})\;\;=\;\;
\frac{x_{\sigma(i)}^i}{x_{\sigma(1)}\cdots x_{\sigma(i)}}
\sum_{k=0}^n(-)^k\sum\limits_{|I|=k}\frac{x_I\,x_{\sigma(1)}
\cdots x_{\sigma(k)}}
{x_{\{\sigma(1),\ldots,\sigma(k)\}}x_{\sigma(i)}^k}$\\

$=\;\frac{x_{\sigma(i)}^i}{x_{\sigma(1)}\cdots x_{\sigma(i)}}
\sum_{k=0}^n(-)^k\Big(
\!\sum\limits_{\substack{\scriptscriptstyle|I|=k\\
\scriptscriptstyle\sigma(i)\not\in I}}
\frac{x_I\,x_{\sigma(1)}\cdots x_{\sigma(k)}}
{x_{\{\sigma(1),\ldots,\sigma(k)\}}x_{\sigma(i)}^k}
\!+\!\!\sum\limits_{\substack{\scriptscriptstyle|I|=k\\
\scriptscriptstyle\sigma(i)\in I}}
\frac{x_I\,x_{\sigma(1)}\cdots x_{\sigma(k)}}
{x_{\{\sigma(1),\ldots,\sigma(k)\}}x_{\sigma(i)}^k}
\Big)$\\

$=\frac{x_{\sigma(i)}^i}{x_{\sigma(1)}\cdots x_{\sigma(i)}}
\sum_{k=0}^n\Big(
(-)^k\sum\limits_{\substack{\scriptscriptstyle|I|=k\\
\scriptscriptstyle\sigma(i)\not\in I}}
\frac{x_I\,x_{\sigma(1)}\cdots x_{\sigma(k)}}
{x_{\{\sigma(1),\ldots,\sigma(k)\}}x_{\sigma(i)}^k}
\;\;-$\\[-2mm]

$\hspace{5.8cm}(-)^{k-1}
\sum\limits_{\substack{\scriptscriptstyle|I|=k-1\\
\scriptscriptstyle\sigma(i)\not\in I}}
\frac{x_I\,x_{\sigma(1)}\cdots x_{\sigma(k-1)}}
{x_{\{\sigma(1),\ldots,\sigma(k-1)\}}x_{\sigma(i)}^{k-1}}
\Big)$\\[-4mm]

$=\;0$

\bigskip

The relative effective divisors $\sum_is_i$ and $S$ in $C$ over 
$W_\sigma$ coincide since they coincide over the open dense subscheme
of $\overline{L}_n$ parametrising chains with distinct sections.
\end{proof}

\begin{rem}
The results of this section imply a construction of a morphism
of fibred categories from the functor of $\Sigma(A_{n-1})$-collections, 
considering the $S_n$-operation on this functor, to the category of 
$\mathbf\Upsilon(A_{n-1})$-collections such that the diagram
\[
\begin{array}{ccc}
C_{\Sigma(A_{n-1})}&\stackrel{\sim}{\longleftrightarrow}
&\overline{L}_n\\[1mm]
\downarrow\qquad\;&&\downarrow\;\\[1mm]
\mathcal C_{\mathbf\Upsilon(A_{n-1})}&\stackrel{\sim}{\longleftrightarrow}
&\overline{\mathcal L}_n\\
\end{array}
\]
commutes. 
\end{rem}

\begin{ex}\label{ex:L_2-L_2}
In the case $n=2$ the functor $F_{A_1}\cong C_{\Sigma(A_1)}
\to\mathcal C_{\mathbf\Upsilon(A_1)}$ maps an object 
$(\mathscr L_{\{\pm\beta_{12}\}},\{t_{\beta_{12}},t_{-\beta_{12}}\})$ 
of $F_{A_1}$ over a scheme $Y$ to the object
\[
\begin{array}{l}
((\O_Y(C_1),a_1),(\O_Y(D_1),b_1),c_1\colon\O_Y(D_1)\otimes
\O_Y(C_1)^{\otimes\,\minus2}\stackrel{\sim\;}{\to}\O_Y)\cong\\
((\mathscr L_{\{\pm\beta_{12}\}},t_{\beta_{12}}+t_{-\beta_{12}}),
(\mathscr L_{\{\pm\beta_{12}\}}^{\otimes2},t_{\beta_{12}}t_{-\beta_{12}}),
\mathscr L_{\{\pm\beta_{12}\}}^{\otimes2}\otimes
\mathscr L_{\{\pm\beta_{12}\}}^{\otimes\,\minus2}
\stackrel{\sim\;}{\to}\O_Y)\\
\end{array}
\] 
of $\mathcal C_{\mathbf\Upsilon(A_1)}$, cf.\ example \ref{ex:L_2}.
\end{ex}

\medskip
\section{Pointed chains with involution and Cartan matrices\newline of type $B$ and $C$}
\label{sec:BC}
\medskip

As a natural variation of $\overline{\mathcal L}_n$ we consider
moduli stacks $\overline{\mathcal L}_n^{\pm}$ of stable 
degree-$2n$-pointed chains of $\P^1$ with an involution.

\begin{defi}\label{def:L_n+-}
We define the fibred category $\overline{\mathcal L}_n^{\pm}$ of 
stable degree-$2n$-pointed chains of $\P^1$ with involution.
An object over a scheme $Y$ is a collection $(C\to Y,I,s_-,s_+,S)$,
where $(C\to Y,s_-,s_+,S)$ is a stable degree-$2n$-pointed chain of 
$\P^1$ over $Y$ (definition \ref{def:L_n}), $I$ an 
automorphism of $C$ over $Y$ such that $I^2=\id_C$ and $I(s_-)=s_+$, 
and $S$ is invariant under $I$.
Morphisms between objects are morphisms of degree-$2n$-pointed 
chains which commute with the involution $I$.
\end{defi}

As in the case of $\overline{\mathcal L}_n$, see proposition 
\ref{prop:L_n-stack}, the fibred category 
$\overline{\mathcal L}_n^{\pm}$ is a stack in the fpqc topology
with representable finite diagonal.

\medskip

Considering degree-$2n$-pointed chains of $\P^1$ with 
involution as degree-$2n$-pointed chains defines a morphism of
stacks $\overline{\mathcal L}_n^{\pm}\to\overline{\mathcal L}_{2n}$
which makes $\overline{\mathcal L}_n^{\pm}$ a subcategory
of $\overline{\mathcal L}_{2n}$ but in general not a substack, 
because a stable degree-$2n$-pointed chain may have
automorphisms not commuting with an additional involution.

\medskip

The moduli stack  $\overline{\mathcal L}_n^{\pm}$ decomposes,
unless we are working in characteristic $2$, into two components 
$\overline{\mathcal L}_n^{\pm}=\overline{\mathcal L}_{n,+}^{\pm}
\cup\overline{\mathcal L}_{n,-}^{\pm}$, where the component 
$\overline{\mathcal L}_{n,+}^{\pm}$ parametrises isomorphism 
classes of stable degree-$2n$-pointed chains with involution
$(C,I,s_-,s_+,S)$ such that the degree of $S$ in each of the
fixed points under the involution is even.
We first consider this main component $\overline{\mathcal L}_{n,+}^{\pm}$.
 
\medskip

The component $\overline{\mathcal L}_{n,+}^{\pm}$ is related to 
the moduli space $\overline{L}_n^{\pm}\cong X(C_n)$ of $2n$-pointed
chains with involution defined in \cite[Section 6]{BB11b}. There is 
a morphism $\overline{L}_n^{\pm}\to\overline{\mathcal L}_{n,+}^{\pm}$
forgetting the labels of the sections.
This morphism is equivariant with respect to the natural
action of the Weyl group $W(C_n)=(\Z/2\Z)^n\rtimes S_n$
on $\overline{L}_n^{\pm}$, the coarse moduli space of 
$\overline{\mathcal L}_{n,+}^{\pm}$ is $\overline{L}_n^{\pm}/W(C_n)$. 
Similar as in proposition \ref{prop:L_n-L_n} one can show 
that the morphism
$\overline{L}_n^{\pm}\to\overline{\mathcal L}_{n,+}^{\pm}$ 
is faithfully flat and finite of degree $|W(C_n)|=2^nn!$. 

\medskip

These morphisms together with the morphisms  
$\overline{L}_{2n}\to\overline{\mathcal L}_{2n}$  
(see section \ref{sec:modulistacks}) and 
$\overline{L}_n^{\pm}\to\overline{L}_{2n}$ 
(see \cite[Rem.\ 6.16]{BB11b}) form a commutative diagram
\[
\begin{array}{ccc}
\overline{L}_n^{\pm}&\longrightarrow&\overline{L}_{2n}\\
\downarrow&&\downarrow\\
\overline{\mathcal L}_{n,+}^{\pm}&\longrightarrow
&\overline{\mathcal L}_{2n}\\
\end{array}
\]
where $\overline{L}_n^{\pm}$ is a component of the fibred product.

\medskip

The stack $\overline{\mathcal L}_{n,+}^{\pm}$ compactifies the space 
of finite subschemes of degree $2n$ in
$\P^1\setminus\{0,\infty\}$ which are invariant under the involution 
and of even degree in each of the fixed points of the involution. 
Equivalently, this is the space of polynomials $\sum_{i=0}^{2n}a'_iy^i$ 
of degree $2n$ with the symmetry $a'_{2n-i}=a'_i$ in the coefficients,
up to change of the variable by multiplication by $-1$. 
These polynomials can contain $y-1$ and $y+1$ only with
even multiplicity.
After dividing by the coefficient $a'_{2n}=a'_0$, we have a
polynomial of the form 
\[
y^{-n}+a_{n-1}y^{-n+1}+\ldots+a_1y^{-1}+a_0+a_1y+\ldots
+a_{n-1}y^{n-1}+y^n
\]
determined by the isomorphism class up to multiplication 
of $y$ with $-1$ (together with multiplication of the whole 
expression by $(-1)^n$).

\medskip

In general, embedding a chain $(C,I,s_-,s_+,S)$ into the projective 
space $\P^{2n}=\P(H^0(C,\O_C(S)))$, the image of $C$ is given 
by equations arising from the $2\times 2$ minors of a matrix 
of the form (decompose into several matrices if some of the $b_i$
are zero, cf.\ remark \ref{rem:embK}; symbol $\sqrt{b_0}$ introduced 
for symmetry reasons)
\[
\left(
\begin{matrix}
\cdots&y_{\minus2}&y_{\minus1}&\sqrt{b_0}y_0&\sqrt{b_0}b_1y_1&\cdots\\
\cdots&\sqrt{b_0}b_1y_{\minus1}&\sqrt{b_0}y_0&y_1&y_2&\cdots\\
\end{matrix}
\right)
\]
where $y_{\minus n},\ldots,y_0,\ldots,y_n$ is a basis of $H^0(C,\O_C(S))$
defined similar as in proposition \ref{prop:embK}, \ref{prop:embY} 
and such that the involution maps $y_{\minus i}\leftrightarrow y_i$.
The sections $s_-,s_+$ become the sections 
$(1\!:\!0\!:\ldots:\!0),(0\!:\ldots:\!0\!:\!1)$ and the subscheme 
$S\subset C\subset\P^{2n}$ is determined by an equation
\[
y_{\minus n}+a_{n-1}y_{\minus(n-1)}+\ldots+a_1y_{\minus1}
+a_0y_0+a_1y_1+\ldots+a_{n-1}y_{n-1}+y_n=0.
\]

\medskip

For an algebraically closed field $K$ a $K$-valued point of 
$\overline{\mathcal L}_{n,+}^{\pm}$ corresponds to a collection 
$(a_{n-1},\ldots,a_0,b_{n-1},\ldots,b_0)\in K^{2n}$ up to the 
equivalence
\[
(a_{n-1},\ldots,a_0,b_{n-1},\ldots,b_0)
\sim(\kappa_{n-1}a_{n-1},\ldots,\kappa_0a_0,
\lambda_{n-1}b_{n-1},\ldots,\lambda_0b_0)
\]
with $(\kappa_{n-1},\ldots,\kappa_0,\lambda_{n-1},\ldots,\lambda_0)
\in(K^*)^{2n}$ satisfying
$\lambda_{n-1}=\kappa_{n-1}^2/\kappa_{n-2}$,
$\lambda_{n-2}=\kappa_{n-2}^2/(\kappa_{n-3}\kappa_{n-1}),\;
\ldots\,,\;
\lambda_1=\kappa_1^2/(\kappa_0\kappa_2),\;
\lambda_0=\kappa_0^2/\kappa_1^2.$
This gives rise to a toric orbifold whose exact sequence
of tori 
\[
1\longrightarrow G\!\cong\!(\G_m)^n\longrightarrow(\G_m)^{2n}
\longrightarrow T_N\!\cong\!(\G_m)^n\longrightarrow 1
\]
corresponds to the exact sequence of lattices
\[
0\longrightarrow M\!\cong\!\Z^n
\stackrel{\left(\begin{smallmatrix}\minus\,C\\I_n\end{smallmatrix}\right)}
{\longrightarrow}\Z^{2n}
\stackrel{\left(\begin{smallmatrix}I_n\;C\\\end{smallmatrix}\right)}
{\longrightarrow}\Z^{n}\longrightarrow 0
\]
where $C=C(C_n)^\top$ is the transpose of the Cartan matrix
\[
C(C_n)\;=\;
\left(\quad
\begin{matrix}
2&\minus1&0&\cdots&0\\[-1mm]
\minus1&2&\ddots&\ddots&\vdots\\[-1mm]
0&\ddots&\ddots&\minus1&0\\[-1mm]
\vdots&\ddots&\minus1&2&\minus1\\[1mm]
0&\cdots&0&\minus2&2\\
\end{matrix}
\quad\right)
\] 
of the root system $C_n$.

\medskip

\begin{defi}\label{def:Y(C_n)}
We define the toric orbifold $\mathcal Y(C_n)$ associated to the 
Cartan matrix of the root system $C_n$ in terms of the stacky fan 
$\mathbf\Upsilon(C_n)=(N,\Upsilon(C_n),\beta)$, where $N=\Z^n$ and 
the linear map $\beta\colon\Z^{2n}\to N$ is given by the 
$n\!\times\!2n$ matrix $(\minus C(C_n)\;I_n)$.
The fan $\Upsilon(C_n)$ has the $2n$ one-dimensional cones
$\rho_{n-1},\ldots,\rho_0,\tau_{n-1},\ldots,\tau_0$ generated by 
the columns of the matrix $(\minus C(C_n)\;I_n)$. A subset of 
one-dimensional cones generates a higher dimensional cone 
of $\Upsilon(C_n)$ if it does not contain one of the sets
$\{\rho_0,\tau_0\},\ldots,\{\rho_{n-1},\tau_{n-1}\}$.
This defines a fan containing $2^n$ $n$-dimensional cones $\sigma_I$
generated by sets $\{\rho_i\}_{i\not\in I}\cup\{\tau_i\}_{i\in I}$
for subsets $I\subseteq\{0,\ldots,n-1\}$.
\end{defi}

The functor of $\mathbf\Upsilon(C_n)$-collections
$\mathcal C_{\mathbf\Upsilon(C_n)}\cong\mathcal Y(C_n)$
has objects of the form\linebreak
$((\mathscr L_{\rho_i},a_i)_{i=0,\ldots,n-1},
(\mathscr L_{\tau_i},b_i)_{i=0,\ldots,n-1},
(c_i)_{i=0,\ldots,n-1})$ over a scheme $Y$,
where the $c_i$ are isomorphisms of line bundles on $Y$

\medskip

$c_{n-1}\colon\mathscr L_{\tau_{n-1}}\!\otimes\!
\mathscr L_{\rho_{n-1}}^{\otimes\,\minus2}
\!\otimes\!\mathscr L_{\rho_{n-2}}\to\O_Y,$

$c_{n-2}\colon\mathscr L_{\tau_{n-2}}\!\otimes\!
\mathscr L_{\rho_{n-1}}\!\otimes\!
\mathscr L_{\rho_{n-2}}^{\otimes\,\minus2}\!\otimes\!
\mathscr L_{\rho_{n-3}}\to\O_Y,$

$\;\vdots$

$c_1\colon\mathscr L_{\tau_1}\!\otimes\!
\mathscr L_{\rho_2}\!\otimes\!
\mathscr L_{\rho_1}^{\otimes\,\minus2}\!\otimes\!
\mathscr L_{\rho_0}\to\O_Y,$

$c_0\colon\mathscr L_{\tau_0}\!\otimes\!
\mathscr L_{\rho_1}^{\otimes 2}\!\otimes\!
\mathscr L_{\rho_0}^{\otimes\,\minus2}\to\O_Y.$

\medskip
\noindent
We have a morphism of stacks $\mathcal C_{\mathbf\Upsilon(C_n)}\to
\mathcal C_{\mathbf\Upsilon(A_{2n-1})}$
by considering the collection
\begin{equation}\label{eq:C_ncoll}
\begin{array}{c}
((\mathscr L_{\rho_{n-1}},a_{n-1}),\ldots,
(\mathscr L_{\rho_0},a_0),\ldots,(\mathscr L_{\rho_{n-1}},a_{n-1}),
(\mathscr L_{\tau_{n-1}},b_{n-1}),\\
\ldots,(\mathscr L_{\tau_0},b_0),\ldots,
(\mathscr L_{\tau_{n-1}},b_{n-1}),c_{n-1},\ldots,c_0,
\ldots,c_{n-1}),\\
\end{array}
\end{equation}
built out of a $\mathbf\Upsilon(C_n)$-collection, as a 
$\mathbf\Upsilon(A_{2n-1})$-collection.
This morphism can be described by the map of fans
$\mathbf\Upsilon(C_{n})\to\mathbf\Upsilon(A_{2n-1})$
mapping $e'_{n-1}\mapsto e_{2n-1}+e_1,\ldots,e'_1\mapsto e_{n+1}+e_{n-1},
e'_0\mapsto e_n$, where $e'_{n-1},\ldots,e'_0$ are the generators 
of $\tau_{n-1},\ldots,\tau_0$ of $\mathbf\Upsilon(C_n)$ and 
$e_{2n-1},\ldots,e_1$ are those of $\tau_{2n-1},\ldots,\tau_1$ of 
$\mathbf\Upsilon(A_{2n-1})$. It corresponds to a toric morphism 
$\mathcal Y(C_n)\to\mathcal Y(A_{2n-1})$ making $\mathcal Y(C_n)$ 
a subcategory of $\mathcal Y(A_{2n-1})$.

\begin{thm}\label{thm:Y(Cn)}
There is an isomorphism of stacks 
$\;\overline{\mathcal L}_{n,+}^{\pm}\cong\,\mathcal Y(C_n)$.
\end{thm}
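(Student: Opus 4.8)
The plan is to mirror the proof of theorem \ref{thm:modulistack-toricstack}, carrying the involution along as additional data at every stage. Both $\overline{\mathcal L}_{n,+}^{\pm}$ and $\mathcal Y(C_n)$ are stacks (the latter via the isomorphism $\mathcal Y(C_n)\cong\mathcal C_{\mathbf\Upsilon(C_n)}$ noted above, which holds for the same reason as in the $A_n$ case since $M$ is a direct summand of $\Z^{2n}$ and $G$ is a torus), so it suffices to produce an equivalence of the underlying fibred categories over affine schemes. I would construct functors $\Phi^C\colon\overline{\mathcal L}_{n,+}^{\pm}\to\mathcal C_{\mathbf\Upsilon(C_n)}$ and $\Psi^C$ in the opposite direction and check that they are mutually inverse.

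For $\Phi^C$, start from an object $(C\to Y,I,s_-,s_+,S)$ over an affine scheme $Y$. Since $S$ is $I$-invariant, the line bundle $\O_C(S)$ inherits an $I$-linearisation, so $I$ acts on $\pi_*\O_C(S)$ and the closed embedding $C\to\P_Y(\pi_*\O_C(S))\cong\P^{2n}_Y$ of proposition \ref{prop:embY} (applied with $2n$ in place of $n$) is $I$-equivariant. The even-degree condition at the fixed locus of $I$ lets me choose the decomposition of proposition \ref{prop:embY} in symmetric form, with a basis $y_{\minus n},\ldots,y_0,\ldots,y_n$ on which $I$ acts by $y_{\minus i}\leftrightarrow y_i$. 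The symmetry of the chain equations and of the equation cutting out $S$ then forces $a_{\minus i}=a_i$ and the matrix displayed before definition \ref{def:Y(C_n)}, yielding functions $a_0,\ldots,a_{n-1},b_0,\ldots,b_{n-1}\in\O_Y(Y)$. Packaging these as $(\O_Y,a_i)$, $(\O_Y,b_i)$ with identity isomorphisms $c_i$ gives a $\mathbf\Upsilon(C_n)$-collection, the central relation $c_0\colon\mathscr L_{\tau_0}\otimes\mathscr L_{\rho_1}^{\otimes 2}\otimes\mathscr L_{\rho_0}^{\otimes\,\minus2}\to\O_Y$ recording exactly the doubled entry in the last column of $C(C_n)$.

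For $\Psi^C$, start from a $\mathbf\Upsilon(C_n)$-collection and build the symmetric $\mathbf\Upsilon(A_{2n-1})$-collection (\ref{eq:C_ncoll}). Applying the functor $\Psi$ of construction \ref{con:data->curve} (for $A_{2n-1}$) produces a degree-$2n$-pointed chain embedded in $\P^{2n}_Y$; by the built-in symmetry of the defining data this chain carries the involution $y_{\minus i}\leftrightarrow y_i$ interchanging $s_-$ and $s_+$ and fixing $S$, while the relation $c_0$ forces the degree of $S$ at the fixed locus to be even, so the output lies in the main component $\overline{\mathcal L}_{n,+}^{\pm}$. Verifying that $\Phi^C\circ\Psi^C$ and $\Psi^C\circ\Phi^C$ are isomorphic to the identity reduces, after forgetting the involution, to the statements for $\Phi$ and $\Psi$ already proved in theorem \ref{thm:modulistack-toricstack}, together with the fact that the symmetric bases are unique up to the scalings $(\kappa_i,\lambda_i)$ prescribed by morphisms of $\mathbf\Upsilon(C_n)$-collections.

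I expect the main obstacle to be the behaviour at the fixed locus of $I$, encoded by the symbol $\sqrt{b_0}$ in the matrix. When $b_0\neq 0$ the involution fixes an interior point of the central component, and the even-degree hypothesis is precisely what permits a coherent choice of the central basis vector $y_0$; when $b_0=0$ the chain degenerates and $I$ interchanges the two central components. Showing that the single relation $c_0$ governs both cases uniformly -- and that it is exactly this relation which singles out $\overline{\mathcal L}_{n,+}^{\pm}$ among the two components -- is the delicate point, reflecting the long-root/short-root asymmetry of the Cartan matrix $C(C_n)$.
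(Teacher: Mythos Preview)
Your proposal is correct and follows essentially the same route as the paper: both arguments pass through the symmetric $\mathbf\Upsilon(A_{2n-1})$-collections of (\ref{eq:C_ncoll}), applying constructions \ref{con:curve->data} and \ref{con:data->curve} with an $I$-symmetric choice of basis $y_{-n},\ldots,y_0,\ldots,y_n$ (equivalently $y_0,\ldots,y_{2n}$ with $y_i\leftrightarrow y_{2n-i}$), and then identify these symmetric collections with $\mathbf\Upsilon(C_n)$-collections together with the observation that $I$-equivariant morphisms correspond exactly to morphisms in $\mathcal C_{\mathbf\Upsilon(C_n)}$. Your discussion of the fixed-locus subtlety and the role of $c_0$ is in fact more explicit than the paper's very brief proof; one small correction is that what singles out the $+$~component is the symmetric shape of the equation for $S$ (an $a_0y_0$ term rather than a skew-symmetric form with vanishing middle coefficient) rather than the relation $c_0$ per se.
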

\begin{proof}
Applying construction \ref{con:curve->data} to a 
degree-$2n$-pointed chain with involution, it is 
possible to choose $y_0,\ldots,y_{2n}$ such that 
the involution maps $y_i\leftrightarrow y_{2n-i}$,
and we obtain a $\mathbf\Upsilon(A_{2n-1})$-collection 
of the form (\ref{eq:C_ncoll}).
Applying construction \ref{con:data->curve} to a 
$\mathbf\Upsilon(A_{2n-1})$-collection of the form 
(\ref{eq:C_ncoll}), making symmetric choices, we can 
introduce an involution on the resulting degree-$2n$-pointed 
chain by $y_i\leftrightarrow y_{2n-i}$.
A $\mathbf\Upsilon(A_{2n-1})$-collection of the form (\ref{eq:C_ncoll}) 
is equivalent to a $\mathbf\Upsilon(C_n)$-collection,
and further, morphisms of the corresponding degree-$2n$-chains 
with involution that commute with the involution are equivalent
to morphisms of $\mathbf\Upsilon(C_n)$-collections.
\end{proof}

\smallskip
\enlargethispage{2mm}

The case of the other component $\overline{\mathcal L}_{n,-}^{\pm}$ 
is very similar.
The stack $\overline{\mathcal L}_{n,-}^{\pm}$ parametrises 
isomorphism classes of stable degree-$2n$-pointed chains with 
involution
$(C,I,s_-,s_+,\linebreak S)$ such that the degree of $S$ in each of the 
fixed points of the involution is odd if there are two fixed points, 
and positive if there is only one fixed point.
It is related to the moduli stack $\mathcal X(C_{n-1})$ defined in 
\cite[Section 6]{BB11b}: there is a morphism 
$\mathcal X(C_{n-1})\to\overline{\mathcal L}_{n,-}^{\pm}$ 
determined by forgetting the labels of the sections and adding 
the fixed point subscheme of the involution as a subscheme of 
degree $2$ to the $2n-2$ sections.

\medskip

The stack $\overline{\mathcal L}_{n,-}^{\pm}$ compactifies the stack 
of finite subschemes $S$ of degree $2n$ in $\P^1\setminus\{0,\infty\}$ 
invariant under the involution such that $S$ has odd degree in
$(1\!:\!1)$ and $(1\!:\!\minus1)$ (positive degree in 
$(1\!:\!1)=(1\!:\!\minus1)$ in characteristic $2$).
Equivalently we may consider polynomials $\sum_{i=0}^{2n}a'_iy^i$ 
of degree $2n$ with the symmetry $a'_{2n-i}=-a'_i$ in the 
coefficients and $a'_n=0$. We may represent each isomorphism class 
by an expression of the form
\[
-y^{-n}-a_{n-1}y^{-n+1}-\ldots-a_1y^{-1}
+0+a_1y+\ldots+a_{n-1}y^{n-1}+y^n
\]
determined up to multiplication of $y$ with $\minus1$ 
(together with multiplication of the whole expression by $(\minus1)^n$).
It has a factor $(y-1)(y^{-1}+1)$, occurring with odd multiplicity 
in characteristic $\neq 2$.

\medskip

In general, similar as in the case of $\overline{\mathcal L}_{n,+}^{\pm}$, 
a not necessarily irreducible chain can be naturally embedded into
$\P^{2n}=\P(H^0(C,\O_C(S)))$ and described by equations
arising from the $2\times 2$ minors of a matrix of the form 
\[
\left(
\begin{matrix}
\cdots&y_{\minus2}&y_{\minus1}&y_0&b_1y_1&\cdots\\
\cdots&b_1y_{\minus1}&y_0&y_1&y_2&\cdots\\
\end{matrix}
\right)
\]
and the subscheme $S\subset C\subset\P^{2n}$ is given by an equation
\[
-y_{\minus n}-a_{n-1}y_{\minus(n-1)}-\ldots-a_1y_{\minus1}+a_1y_1+\ldots
+a_{n-1}y_{n-1}+y_n\;=\;0.
\]
Over an algebraically closed field $K$ a $K$-valued point of 
$\overline{\mathcal L}_{n,-}^{\pm}$ corresponds to a collection 
$(a_{n-1},\ldots,a_1,b_{n-1},\ldots,b_1)\in K^{2n-2}$ up to the 
equivalence
\[
(a_{n-1},\ldots,a_1,b_{n-1},\ldots,b_1)
\sim(\kappa_{n-1}a_{n-1},\ldots,\kappa_1a_1,
\lambda_{n-1}b_{n-1},\ldots,\lambda_1b_1)
\]
with $(\kappa_{n-1},\ldots,\kappa_1,\lambda_{n-1},\ldots,\lambda_1)
\in(K^*)^{2n-2}$ satisfying
$\lambda_{n-1}\!=\!\kappa_{n-1}^2/\kappa_{n-2},$
$\lambda_{n-2}=\kappa_{n-2}^2/(\kappa_{n-3}\kappa_{n-1}),\;
\ldots\,,\;
\lambda_2=\kappa_2^2/(\kappa_1\kappa_3),\;
\lambda_1^2=\kappa_1^2/\kappa_2^2.$

We will see that this stack can be described by a toric stack 
that differs from $\mathcal Y(C_{n-1})$ by replacing the matrix 
$(\minus C(C_{n-1})\;I_{n-1})$ defining the map $\beta$ of the 
stacky fan $\mathbf\Upsilon(C_{n-1})$ by the matrix 
\[
\left(
\qquad\minus C(C_{n-1})\qquad
\begin{matrix}
1&0&\\
0&\ddots&\ddots\\
&\ddots&1&0\\
&&0&2\\
\end{matrix}\;\;
\right).
\]
In the case $n=1$ we define it to be $B\mu_2$.
This toric stack corresponds to the category of collections 
of the form
$((\mathscr L_{\rho_i},a_i)_{i=1,\ldots,n-1},
(\mathscr L_{\tau_i},b_i)_{i=1,\ldots,n-1},
(c_i)_{i=1,\ldots,n-1})$ over a scheme $Y$,
where the $c_i$ are isomorphisms of line bundles

\smallskip

$c_{n-1}\colon\mathscr L_{\tau_{n-1}}\!\otimes\!
\mathscr L_{\rho_{n-1}}^{\otimes\,\minus2}
\!\otimes\!\mathscr L_{\rho_{n-2}}\to\O_Y,$

$c_{n-2}\colon\mathscr L_{\tau_{n-2}}\!\otimes\!
\mathscr L_{\rho_{n-1}}\!\otimes\!
\mathscr L_{\rho_{n-2}}^{\otimes\,\minus2}\!\otimes\!
\mathscr L_{\rho_{n-3}}\to\O_Y,$

$\;\vdots$

$c_2\colon\mathscr L_{\tau_2}\!\otimes\!
\mathscr L_{\rho_3}\!\otimes\!
\mathscr L_{\rho_2}^{\otimes\,\minus2}\!\otimes\!
\mathscr L_{\rho_1}\to\O_Y,$

$c_1\colon\mathscr L_{\tau_1}^{\otimes 2}\!\otimes\!
\mathscr L_{\rho_2}^{\otimes2}\!\otimes\!
\mathscr L_{\rho_1}^{\otimes\,\minus2}\to\O_Y.$

\begin{prop}
The stack $\overline{\mathcal L}_{n,-}^{\pm}$ is isomorphic to 
the above toric stack. It can be embedded into 
$\overline{\mathcal L}_{n,+}^{\pm}$ as the divisor $D_{\rho_0}$
corresponding to the cone $\rho_0$ and defined by $a_0=0$.
\end{prop}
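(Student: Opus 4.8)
The plan is to prove the two assertions in turn: first the isomorphism with the toric stack by the method of Theorem \ref{thm:Y(Cn)}, and then the identification of that stack with the boundary divisor $D_{\rho_0}$ by a computation with stacky fans.

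I would prove $\overline{\mathcal L}_{n,-}^{\pm}\cong$ (the toric stack attached to the matrix displayed before the proposition) by imitating the proof of Theorem \ref{thm:Y(Cn)}. Using the embedding into $\P^{2n}=\P(H^0(C,\O_C(S)))$ given above the statement, one chooses the basis $y_{\minus n},\ldots,y_n$ compatibly with the involution so that $I$ acts by $y_i\leftrightarrow y_{2n-i}$, the chain is cut out by the displayed matrix (with no $\sqrt{b_0}$), and $S$ is defined by the antisymmetric equation with vanishing central coefficient ($a_0=0$). Applying construction \ref{con:curve->data} to such a chain with involution yields a $\mathbf\Upsilon(A_{2n-1})$-collection whose $\rho$-sections read $\minus a_{n-1},\ldots,\minus a_1,0,a_1,\ldots,a_{n-1}$; its line bundles and relations are invariant under $\rho_i\leftrightarrow\rho_{2n-i}$, $\tau_i\leftrightarrow\tau_{2n-i}$, while this involution acts on the sections by the sign forced by $a_0=0$. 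Conversely, construction \ref{con:data->curve} applied to a collection of this shape, with symmetric choices of trivialisations, transports the involution $y_i\leftrightarrow y_{2n-i}$ to the resulting chain. The point of the argument is that folding the two central relations $c_1,c_0$ of the $A_{2n-1}$-collection around the fixed coordinate $y_0$ leaves a single relation $c_1\colon\mathscr L_{\tau_1}^{\otimes2}\otimes\mathscr L_{\rho_2}^{\otimes2}\otimes\mathscr L_{\rho_1}^{\otimes\,\minus2}\to\O_Y$, the squares being exactly what the modified matrix records. Checking that morphisms commuting with the involution correspond to morphisms of collections for the modified fan then gives the equivalence of fibred categories.

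For the embedding as $D_{\rho_0}$ I would argue combinatorially. Since $\overline{\mathcal L}_{n,+}^{\pm}\cong\mathcal Y(C_n)$, the locus $\{a_0=0\}$ is the torus-invariant divisor $D_{\rho_0}$ attached to the ray $\rho_0$, so it is the toric stack whose stacky fan is the star of $\rho_0$: one passes to the quotient lattice $\bar N=N/\Z\,n_{\rho_0}$ and projects the remaining generators. In the standard basis $\epsilon_1,\ldots,\epsilon_n$ of $N=\Z^n$ the generator of $\rho_0$ is $n_{\rho_0}=\epsilon_{n-1}-2\epsilon_n$, so in $\bar N\cong\Z^{n-1}$ one has $\bar\epsilon_{n-1}=2\bar\epsilon_n$. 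The pair $\{\rho_0,\tau_0\}$ spans no cone of $\Upsilon(C_n)$ (equivalently, nondegeneracy forces $b_0$ invertible along $a_0=0$), so $\tau_0$ does not occur in the star, and the surviving rays are $\rho_1,\ldots,\rho_{n-1}$ and $\tau_1,\ldots,\tau_{n-1}$. Projecting, the remaining columns of $\minus C(C_n)$ give $\minus C(C_{n-1})$, the generators $\tau_{n-1},\ldots,\tau_2$ (namely $\epsilon_1,\ldots,\epsilon_{n-2}$) map to the basis vectors $\bar\epsilon_1,\ldots,\bar\epsilon_{n-2}$, and $\tau_1=\epsilon_{n-1}$ maps to $2\bar\epsilon_n$. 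This is precisely the matrix appearing in the statement, so $D_{\rho_0}$ is isomorphic to the modified toric stack and hence, by the first part, to $\overline{\mathcal L}_{n,-}^{\pm}$.

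The main obstacle I expect is the behaviour at the involution-fixed coordinate $y_0$: making precise why the factor $2$ appears and how the two central relations of the $A_{2n-1}$-collection fold into one. In the component $\overline{\mathcal L}_{n,+}^{\pm}$ this subtlety was hidden in the symbol $\sqrt{b_0}$, whereas here it surfaces as the doubled generator $\tau_1\mapsto2\bar\epsilon_n$ and the squared line bundles in $c_1$, reflecting the $\mu_2$-automorphism carried by the sign twist on the $a_i$ and by a subscheme of odd degree at a fixed point; this must be reconciled with the degenerate case $n=1$, where the stack is $B\mu_2$. Some care is also needed to keep the earlier hypothesis that the characteristic is not $2$ in force throughout, since otherwise the two components $\overline{\mathcal L}_{n,\pm}^{\pm}$ are not separated and the fixed-point analysis changes.
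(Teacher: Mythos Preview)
Your first part---reducing to Theorem \ref{thm:Y(Cn)} via the embedding into $\mathcal Y(A_{2n-1})$---is exactly what the paper does, and your remarks on how the central relations fold are an accurate expansion of that sketch.

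For the second part you take a genuinely different route. You compute the star of $\rho_0$ in the stacky fan $\mathbf\Upsilon(C_n)$ and match it against the displayed matrix; your lattice computation is correct (and the $n=3$ check confirms it). The paper instead writes down an explicit functor on collections: it sends a $\mathbf\Upsilon(C_{n-1})$-type collection $((\mathscr L_{\rho_i},a_i)_{i\geq1},(\mathscr L_{\tau_i},b_i)_{i\geq1},(c_i)_{i\geq1})$ to a $\mathbf\Upsilon(C_n)$-collection by setting $\mathscr L_{\rho_0}=\mathscr L_{\tau_1}\otimes\mathscr L_{\rho_2}$ with $a_0=0$, $\mathscr L_{\tau_0}=\O_Y$ with $b_0=1$, and $c_0$ built from $c_1$. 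The paper then interprets this map geometrically: it takes a chain with antisymmetric equation $-y_{-n}-\ldots-a_1y_{-1}+a_1y_1+\ldots+y_n$ to the same chain with the symmetric equation $y_{-n}+\ldots+a_1y_{-1}+a_1y_1+\ldots+y_n$, i.e.\ flips the signs of half the coefficients. Your combinatorial argument is cleaner and more systematic, and it makes transparent \emph{why} the $2$ appears in the last $\tau$-column (it is literally the image of $\epsilon_{n-1}$ under $\epsilon_{n-1}\mapsto 2\bar\epsilon_n$); the paper's argument, on the other hand, exhibits the embedding as a concrete morphism of moduli stacks rather than an abstract isomorphism of toric stacks, which is informative in its own right. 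Both are valid proofs of the stated proposition. Your caution about characteristic $2$ is appropriate but already implicit in the setup preceding the proposition.
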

\begin{proof}
Similar as in the proof of theorem \ref{thm:Y(Cn)} one can show 
that $\overline{\mathcal L}_{n,-}^{\pm}$ is isomorphic to the 
above toric stack by embedding this toric stack as a subcategory 
into $\mathcal Y(A_{2n-1})$.

An embedding of $\overline{\mathcal L}_{n,-}^{\pm}$ into
$\overline{\mathcal L}_{n,+}^{\pm}$ as divisor $D_{\rho_0}$
is given by mapping a collection 
$((\mathscr L_{\rho_i},a_i)_{i=1,\ldots,n-1},
(\mathscr L_{\tau_i},b_i)_{i=1,\ldots,n-1},
(c_i)_{i=1,\ldots,n-1})$
over a scheme $Y$ to the collection
$((\mathscr L_{\rho_i},a_i)_{i=0,\ldots,n-1},
(\mathscr L_{\tau_i},b_i)_{i=0,\ldots,n-1},
(c_i)_{i=0,\ldots,n-1})$
where $\mathscr L_{\rho_0}=\mathscr L_{\tau_1}\otimes\mathscr L_{\rho_2}$,
$a_0=0$ and $\mathscr L_{\tau_0}=\O_Y$, $b_0=1$ and
$c_0$ is defined using $c_1$.
This corresponds to mapping $(C\to Y,I,s_-,s_+,S_-)$ to
$(C\to Y,I,s_-,s_+,S_+)$ such that $S_+$ is  given by
\[
y_{\minus n}+a_{n-1}y_{\minus(n-1)}+\ldots+a_1y_{\minus1}+a_1y_1+\ldots
+a_{n-1}y_{n-1}+y_n
\]
if $S_-$ is given by
\[
-y_{\minus n}-a_{n-1}y_{\minus(n-1)}-\ldots-a_1y_{\minus1}+a_1y_1+\ldots
+a_{n-1}y_{n-1}+y_n.
\]
\end{proof}

\begin{ex} The toric orbifold $\overline{\mathcal L}_{1,+}^{\pm}$ 
is isomorphic to the weighted projective line 
$\P(1,2)\cong\mathcal Y(C_1)$. Here the inclusion as subcategory
$\overline{\mathcal L}_{1,+}^{\pm}\to\overline{\mathcal L}_2$ is
an isomorphism of stacks as any degree-$2$-pointed chain is isomorphic 
to a symmetric object under an involution whose isomorphisms commute
with the involution.
So we have the same situation as in examples \ref{ex:L_2},
\ref{ex:stackyfanY(A_1)}, \ref{ex:L_2-Y(A_1)}.
The component $\overline{\mathcal L}_{1,-}^{\pm}$ is isomorphic
to $B\mu_2$.
\end{ex}

\begin{ex}
The stacky fan of the toric orbifold 
$\overline{\mathcal L}_{2,+}^{\pm}\cong\mathcal Y(C_2)$ 
is given by the matrix
$
\left(\;
\begin{matrix}
\minus2&1&1&0\\
2&\minus2&0&1\\
\end{matrix}\;
\right)
$: 

\bigskip
\noindent
\begin{picture}(150,70)(0,0)
\put(8,35){\makebox(0,0)[l]{\large$\mathbf\Upsilon(C_2)$}}

\put(5,-5){
\dottedline{1}(25,40)(125,40)
\dottedline{3}(25,55)(125,55)
\dottedline{3}(25,70)(125,70)
\dottedline{3}(25,25)(125,25)
\dottedline{3}(25,10)(125,10)

\dottedline{1}(75,5)(75,75)
\dottedline{3}(90,5)(90,75)
\dottedline{3}(105,5)(105,75)
\dottedline{3}(120,5)(120,75)
\dottedline{3}(60,5)(60,75)
\dottedline{3}(45,5)(45,75)
\dottedline{3}(30,5)(30,75)

\put(75,40){\vector(1,0){15}}\put(93,41){\makebox(0,0)[b]{$\tau_1$}}
\put(75,40){\vector(0,1){15}}\put(76,58){\makebox(0,0)[l]{$\tau_0$}}
\put(75,40){\vector(-1,1){30}}\put(47,73){\makebox(0,0)[l]{$\rho_1$}}
\put(75,40){\vector(1,-2){15}}\put(93,12){\makebox(0,0)[b]{$\rho_0$}}

\put(83,46){\makebox(0,0)[c]{$\sigma_{\{0,1\}}$}}
\put(68,59){\makebox(0,0)[c]{$\sigma_{\{0\}}$}}
\put(86,31){\makebox(0,0)[c]{$\sigma_{\{1\}}$}}
\put(69,33){\makebox(0,0)[c]{$\sigma_{\emptyset}$}}
}
\end{picture}

\bigskip
\noindent
We picture the types of pointed chains over the torus invariant 
divisors of the moduli stack $\overline{\mathcal L}_{2,+}^{\pm}$.

\medskip
\noindent
\begin{picture}(150,62)(0,0)
\put(0,10){
\put(2.5,0){
\qbezier(0,0)(20,4)(40,0)
\qbezier(35,0)(55,4)(75,0)
\qbezier(70,0)(90,4)(110,0)
\qbezier(105,0)(125,4)(145,0)
}
\qbezier(2.5,0.8)(7,0.1)(7.5,0)
\qbezier(147.5,0.8)(143,0.1)(142.5,0)
}
\put(5,6){\makebox(0,0)[c]{\tiny$b_0,b_1=0$}}
\put(5,1){\makebox(0,0)[c]{\small$\sigma_{\{0,1\}}$}}
\put(40,6){\makebox(0,0)[c]{\tiny$b_0,a_1=0$}}
\put(40,1){\makebox(0,0)[c]{\small$\sigma_{\{0\}}$}}
\put(40.5,11.5){\makebox(0,0)[b]{\large$\curvearrowright$}}
\put(40.1,14.5){\makebox(0,0)[b]{\small$\mu_2$}}
\put(75,6){\makebox(0,0)[c]{\tiny$a_0,a_1=0$}}
\put(75,1){\makebox(0,0)[c]{\small$\sigma_{\emptyset}$}}
\put(75.5,11.5){\makebox(0,0)[b]{\large$\curvearrowright$}}
\put(75.1,14.5){\makebox(0,0)[b]{\small$\mu_2$}}
\put(110,6){\makebox(0,0)[c]{\tiny$a_0,b_1=0$}}
\put(110,1){\makebox(0,0)[c]{\small$\sigma_{\{1\}}$}}
\put(110.5,11.5){\makebox(0,0)[b]{\large$\curvearrowright$}}
\put(110.1,14.5){\makebox(0,0)[b]{\small$\mu_2$}}
\put(145,6){\makebox(0,0)[c]{\tiny$b_0,b_1=0$}}
\put(145,1){\makebox(0,0)[c]{\small$\sigma_{\{0,1\}}$}}
\put(22.5,6){\makebox(0,0)[c]{\tiny$b_0=0$}}
\put(22.5,1){\makebox(0,0)[c]{\small$\tau_0$}}
\put(57.5,6){\makebox(0,0)[c]{\tiny$a_1=0$}}
\put(57.5,1){\makebox(0,0)[c]{\small$\rho_1$}}
\put(58,12.5){\makebox(0,0)[b]{\large$\curvearrowright$}}
\put(57.6,15.5){\makebox(0,0)[b]{\small$\mu_2$}}
\put(92.5,6){\makebox(0,0)[c]{\tiny$a_0=0$}}
\put(92.5,1){\makebox(0,0)[c]{\small$\rho_0$}}
\put(127.5,6){\makebox(0,0)[c]{\tiny$b_1=0$}}
\put(127.5,1){\makebox(0,0)[c]{\small$\tau_1$}}
\put(5,40){
\drawline(-2,22)(2,8)
\drawline(2,12)(-2,-2)
\drawline(-2,2)(2,-12)
\drawline(2,-8)(-2,-22)
\put(0,15){\circle*{0.8}}
\put(0,5){\circle*{0.8}}
\put(0,-5){\circle*{0.8}}
\put(0,-15){\circle*{0.8}}
}
\put(22.5,40){
\drawline(-2,-2)(2,18)
\drawline(-2,2)(2,-18)
\put(1,13){\circle*{0.8}}
\put(-0.6,5){\circle*{0.8}}
\put(1,-13){\circle*{0.8}}
\put(-0.6,-5){\circle*{0.8}}
}
\put(40,40){
\drawline(-2,-2)(2,18)
\drawline(-2,2)(2,-18)
\put(0.6,11){\circle*{0.8}}
\put(-0.6,5){\circle*{0.8}}
\put(0.6,-11){\circle*{0.8}}
\put(-0.6,-5){\circle*{0.8}}
\put(3.6,11){\vector(-1,0){2}}
\put(2.4,5){\vector(-1,0){2}}
\spline(2.4,5)(3,5)(3.8,5.1)(5,6.5)(5.6,9.5)(4.4,10.9)(3.6,11)
\put(3.6,-11){\vector(-1,0){2}}
\put(2.4,-5){\vector(-1,0){2}}
\spline(2.4,-5)(3,-5)(3.8,-5.1)(5,-6.5)(5.6,-9.5)(4.4,-10.9)(3.6,-11)
}
\put(57.5,40){
\drawline(0,-16)(0,16)
\put(0,11){\circle*{0.8}}
\put(0,5){\circle*{0.8}}
\put(0,-5){\circle*{0.8}}
\put(0,-11){\circle*{0.8}}
\put(3,11){\vector(-1,0){2}}
\put(3,5){\vector(-1,0){2}}
\spline(3,5)(3.8,5.1)(5,6.5)(5,9.5)(3.8,10.9)(3,11)
\put(3,-11){\vector(-1,0){2}}
\put(3,-5){\vector(-1,0){2}}
\spline(3,-5)(3.8,-5.1)(5,-6.5)(5,-9.5)(3.8,-10.9)(3,-11)
}
\put(75,40){
\drawline(0,-16)(0,16)
\put(0,9){\circle*{0.8}}
\put(0,3){\circle*{0.8}}
\put(0,-3){\circle*{0.8}}
\put(0,-9){\circle*{0.8}}
\put(3,9){\vector(-1,0){2}}
\put(3,3){\vector(-1,0){2}}
\spline(3,3)(3.8,3.1)(5,4.5)(5,7.5)(3.8,8.9)(3,9)
\put(3,-9){\vector(-1,0){2}}
\put(3,-3){\vector(-1,0){2}}
\spline(3,-3)(3.8,-3.1)(5,-4.5)(5,-7.5)(3.8,-8.9)(3,-9)
}
\put(92.5,40){
\drawline(0,-16)(0,16)
\put(0,11){\circle*{0.8}}
\put(0,3){\circle*{0.8}}
\put(0,-3){\circle*{0.8}}
\put(0,-11){\circle*{0.8}}
}
\put(110,40){
\drawline(1,-12)(1,12)
\drawline(2,6)(-3,20)
\drawline(2,-6)(-3,-20)
\put(1,3){\circle*{0.8}}
\put(1,-3){\circle*{0.8}}
\put(4,3){\vector(-1,0){2}}
\put(4,-3){\vector(-1,0){2}}
\spline(4,-3)(4.8,-2.9)(6,-1.5)(6,1.5)(4.8,2.9)(4,3)
\put(-1,14.375){\circle*{0.8}}
\put(-1,-14.375){\circle*{0.8}}
}
\put(127.5,40){
\drawline(1,-12)(1,12)
\drawline(2,6)(-3,20)
\drawline(2,-6)(-3,-20)
\put(1,4){\circle*{0.8}}
\put(1,-4){\circle*{0.8}}
\put(-1,14.375){\circle*{0.8}}
\put(-1,-14.375){\circle*{0.8}}
}
\put(145,40){
\drawline(-2,22)(2,8)
\drawline(2,12)(-2,-2)
\drawline(-2,2)(2,-12)
\drawline(2,-8)(-2,-22)
\put(0,15){\circle*{0.8}}
\put(0,5){\circle*{0.8}}
\put(0,-5){\circle*{0.8}}
\put(0,-15){\circle*{0.8}}
}
\end{picture}

\medskip

\noindent
The toric orbifold $\overline{\mathcal L}_{2,-}^{\pm}\!\cong\P^1/\mu_2$ 
corresponds to the stacky fan given by the matrix $(\;\minus2\;\;2\;)$.
We have the following types of pointed chains over 
$\overline{\mathcal L}_{2,-}^{\pm}$: 

\medskip
\noindent
\begin{picture}(150,52)(0,0)
\put(8,10){\makebox(0,0)[l]{\large$\overline{\mathcal L}_{2,-}^\pm$}}
\put(5,0){
\put(25,10){\line(1,0){100}}
\put(40,5){\makebox(0,0)[b]{\tiny$a_1=0$}}
\put(40,2){\makebox(0,0)[c]{\small$\rho_1$}}
\put(40,9){\line(0,1){2}}
\put(40.5,11.5){\makebox(0,0)[b]{\large$\curvearrowright$}}
\put(40.1,14.5){\makebox(0,0)[b]{\small$\mu_2$}}
\put(75,5){\makebox(0,0)[b]{\tiny$a_1,b_1\neq 0$}}
\put(110,5){\makebox(0,0)[b]{\tiny$b_1=0$}}
\put(110,2){\makebox(0,0)[c]{\small$\tau_1$}}
\put(110,9){\line(0,1){2}}
\put(110.5,11.5){\makebox(0,0)[b]{\large$\curvearrowright$}}
\put(110.1,14.5){\makebox(0,0)[b]{\small$\mu_2$}}
\put(40,36){
\drawline(0,-16)(0,16)
\put(0,9){\circle*{0.8}}
\put(0,3){\circle*{0.8}}
\put(0,-3){\circle*{0.8}}
\put(0,-9){\circle*{0.8}}
\put(3,3){\vector(-1,0){2}}
\put(3,-3){\vector(-1,0){2}}
\spline(3,-3)(3.8,-2.9)(5,-1.5)(5,1.5)(3.8,2.9)(3,3)
\put(3,9){\vector(-1,0){2}}
\put(3,-9){\vector(-1,0){2}}
\spline(3,-9)(3.8,-8.9)(7,-6.5)(7,6.5)(3.8,8.9)(3,9)
}
\put(75,36){
\drawline(0,-16)(0,16)
\put(0,12){\circle*{0.8}}
\put(0,3){\circle*{0.8}}
\put(0,-3){\circle*{0.8}}
\put(0,-12){\circle*{0.8}}
}
\put(110,36){
\drawline(1,-10)(1,10)
\drawline(2,6)(-3,16)
\drawline(2,-6)(-3,-16)
\put(1,3){\circle*{0.8}}
\put(1,-3){\circle*{0.8}}
\put(4,3){\vector(-1,0){2}}
\put(4,-3){\vector(-1,0){2}}
\spline(4,-3)(4.8,-2.9)(6,-1.5)(6,1.5)(4.8,2.9)(4,3)
\put(-1,12){\circle*{0.8}}
\put(-1,-12){\circle*{0.8}}
}
}
\end{picture}
\end{ex}

\medskip

One also may consider chains with involution
and a subscheme of odd degree.

\begin{defi}
Let the fibred category $\overline{\mathcal L}_n^{0,\pm}$ of 
stable degree-$(2n\!+\!1)$-pointed chains of $\P^1$ with 
involution be defined analogously to definition \ref{def:L_n+-}.
\end{defi}

The fibred category $\overline{\mathcal L}_n^{0,\pm}$ is a 
stack in the fpqc topology with representable finite diagonal.

\medskip

The moduli stack $\overline{\mathcal L}_n^{0,\pm}$ forms a subcategory 
of $\overline{\mathcal L}_{2n+1}$.
It is related to the moduli space $\overline{L}_n^{0,\pm}\cong X(B_n)$ 
of $(2n\!+\!1)$-pointed chains with involution defined in 
\cite[Section 1]{BB11b}. We have a morphism 
$\overline{L}_n^{0,\pm}\to\overline{\mathcal L}_n^{0,\pm}$
forgetting the labels of the sections, which is equivariant with 
respect to the action of the Weyl group $W(B_n)=(\Z/2\Z)^n\rtimes S_n$
on $\overline{L}_n^{0,\pm}$. The coarse moduli space of 
$\overline{\mathcal L}_n^{0,\pm}$ is $\overline{L}_n^{0,\pm}/W(B_n)$.
As in the $C$-case the morphism
$\overline{L}_n^{0,\pm}\to\overline{\mathcal L}_n^{0,\pm}$ 
is faithfully flat and finite of degree $|W(B_n)|=2^nn!$,
and we have a commutative diagram
\[
\begin{array}{ccc}
\overline{L}_n^{0,\pm}&\longrightarrow&\overline{L}_{2n+1}\\
\downarrow&&\downarrow\\
\overline{\mathcal L}_n^{0,\pm}&\longrightarrow
&\overline{\mathcal L}_{2n+1}\\
\end{array}
\]

\medskip

Embedding a degree-$(2n+1)$-pointed chain with involution 
$(C,I,s_-,s_+,S)$ into the projective space 
$\P^{2n+1}=\P(H^0(C,\O_C(S)))$, the image of $C$ is given by 
equations arising from the $2\times 2$ minors of a matrix 
of the form
\[
\left(
\begin{matrix}
\cdots&y_{\minus5/2}&y_{\minus3/2}&y_{\minus1/2}
&b_1y_{1/2}
&b_1b_2y_{3/2}
&\cdots\\
\cdots&b_1b_2y_{\minus3/2}
&b_1y_{\minus1/2}&y_{1/2}
&y_{3/2}&y_{5/2}
&\cdots\\
\end{matrix}
\right)
\]
and the subscheme $S$ by 
\[
y_{\minus(2n+1)/2}+a_ny_{\minus(2n-1)/2}+\ldots+a_1y_{\minus1/2}
+a_1y_{1/2}+\ldots+a_ny_{(2n-1)/2}+y_{(2n+1)/2}.
\]
where $y_{\minus(2n+1)/2},\ldots,y_{\minus3/2},y_{\minus1/2},
y_{1/2},y_{3/2},\ldots,y_{(2n+1)/2}$ is a basis of $H^0(C,\O_C(S))$ 
defined similar as in proposition \ref{prop:embK}, \ref{prop:embY} 
and such that the involution maps $y_{\minus i/2}\leftrightarrow y_{i/2}$.

\begin{defi}
We define the toric orbifold $\mathcal Y(B_n)$  
in terms of the stacky fan $\mathbf\Upsilon(B_n)$
as in definition \ref{def:Y(C_n)} replacing
the Cartan matrix $C(C_n)$ of the root system $C_n$ 
by the Cartan matrix $C(B_n)$ of the root system $B_n$.
\end{defi}

It turns out that $\overline{\mathcal L}_n^{0,\pm}$ is not quite 
$\mathcal Y(B_n)$, but coincides with the underlying canonical toric 
stack $\mathcal Y(B_n)^{\textrm{can}}$ (as defined in \cite{FMN10}).
So instead of the Cartan matrix of the root system $B_n$
we have the matrix
\[
\left(\quad
\begin{matrix}
2&\minus1&0&\cdots&\cdots&0\\
\minus1&2&\ddots&\ddots&&\vdots\\
0&\ddots&\ddots&\ddots&\ddots&\vdots\\
\vdots&\ddots&\ddots&2&\minus1&0\\
\vdots&&\ddots&\minus1&2&\minus1\\[2mm]
0&\cdots&\cdots&0&\minus1&1\\
\end{matrix}
\quad\right)
\] 
where the rightmost column is half of the column of the
Cartan matrix. The
functor of $\mathbf\Upsilon(B_n)^{\textrm{can}}$-collections
$\mathcal C_{\mathbf\Upsilon(B_n)^{\textrm{can}}}\cong
\mathcal Y(B_n)^{\textrm{can}}$
has objects of the form
$((\mathscr L_{\rho_i},a_i)_{i=1,\ldots,n},\linebreak
(\mathscr L_{\tau_i},b_i)_{i=1,\ldots,n},
(c_i)_{i=1,\ldots,n})$ over a scheme $Y$,
where the $c_i$ are isomorphisms of line bundles
\[
\begin{array}{l}
c_n\colon\!\mathscr L_{\tau_n}\!\!\otimes\!
\mathscr L_{\rho_n}^{\otimes\minus2}
\!\otimes\!\mathscr L_{\rho_{n-1}}\!\!\to\O_Y,\;
c_{n-1}\colon\!\mathscr L_{\tau_{n-1}}\!\otimes\!
\mathscr L_{\rho_n}\!\!\otimes\!
\mathscr L_{\rho_{n-1}}^{\otimes\minus2}\!\!\otimes\!
\mathscr L_{\rho_{n-2}}\!\!\to\O_Y,\\
\ldots\;,\;
c_2\colon\mathscr L_{\tau_2}\!\otimes\!
\mathscr L_{\rho_3}\!\otimes\!
\mathscr L_{\rho_2}^{\otimes\minus2}\!\otimes\!
\mathscr L_{\rho_1}\to\O_Y,\;
c_1\colon\mathscr L_{\tau_1}\!\otimes\!
\mathscr L_{\rho_2}\!\otimes\!
\mathscr L_{\rho_1}^{\otimes\minus1}\to\O_Y.\\
\end{array}
\]
The inclusion as subcategory $\mathcal Y(B_n)^{\textrm{can}}\to
\mathcal Y(A_{2n})$ can be described as 
$\mathcal C_{\mathbf\Upsilon(B_n)^{\textrm{can}}}\to
\mathcal C_{\mathbf\Upsilon(A_{2n})}$ 
by considering the collection
\[
\begin{array}{c}
((\mathscr L_{\rho_n},a_n),\ldots,(\mathscr L_{\rho_1},a_1),
(\mathscr L_{\rho_1},a_1),\ldots,(\mathscr L_{\rho_n},a_n),\\
(\mathscr L_{\tau_n},b_n),\ldots,(\mathscr L_{\tau_1},b_1),
(\mathscr L_{\tau_1},b_1),\ldots,(\mathscr L_{\tau_n},b_n),
c_n,\ldots,c_1,c_1,\ldots,c_n),\\
\end{array}
\]
formed out of a $\mathbf\Upsilon(B_n)^{\textrm{can}}$-collection, as a 
$\mathbf\Upsilon(A_{2n})$-collection.

\pagebreak

As in the case of degree-$2n$-pointed chains with involution one can prove:

\begin{thm}
There is an isomorphism of stacks 
$\;\overline{\mathcal L}_n^{0,\pm}\cong\,\mathcal Y(B_n)^{\textrm{can}}$.
\end{thm}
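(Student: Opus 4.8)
The proof follows the pattern of theorem \ref{thm:Y(Cn)}, using constructions \ref{con:curve->data} and \ref{con:data->curve} together with the identification $\overline{\mathcal L}_{2n+1}\cong\mathcal Y(A_{2n})$. Starting from a stable degree-$(2n\!+\!1)$-pointed chain with involution $(C\!\to\!Y,I,s_-,s_+,S)$, I would apply construction \ref{con:curve->data} after choosing the basis $y_{\minus(2n+1)/2},\ldots,y_{\minus1/2},y_{1/2},\ldots,y_{(2n+1)/2}$ of $\pi_*\O_C(S)$ characterised as in propositions \ref{prop:embK} and \ref{prop:embY} but in the symmetric indexing, so that $I$ acts by $y_{\minus i/2}\leftrightarrow y_{i/2}$. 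The condition $I(s_-)=s_+$ and the $I$-invariance of $S$ then force the resulting $\mathbf\Upsilon(A_{2n})$-collection to have the symmetric form $a_k=a_{2n+1-k}$, $b_k=b_{2n+1-k}$ and $c_k=c_{2n+1-k}$ (up to the chosen trivialisations), which is precisely the data encoded by the $2\times2$-minor matrix and the equation for $S$ displayed before the theorem.

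Conversely, applying construction \ref{con:data->curve} to a $\mathbf\Upsilon(A_{2n})$-collection of this symmetric form, and making the trivialisation choices symmetrically, produces a degree-$(2n\!+\!1)$-pointed chain on which $y_{\minus i/2}\leftrightarrow y_{i/2}$ defines an involution with $I(s_-)=s_+$ and $S$ invariant. It then remains to match symmetric $\mathbf\Upsilon(A_{2n})$-collections with $\mathbf\Upsilon(B_n)^{\textrm{can}}$-collections: the involution identifies $\rho_i$ with $\rho_{2n+1-i}$ and $\tau_i$ with $\tau_{2n+1-i}$, so the independent line bundles with sections are indexed by $i=1,\ldots,n$, and the folding of the relations $c_1,\ldots,c_{2n}$ of the $A$-collection gives the relations $c_1,\ldots,c_n$ of a $\mathbf\Upsilon(B_n)^{\textrm{can}}$-collection. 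I would then check that morphisms of degree-$(2n\!+\!1)$-pointed chains commuting with the involution correspond exactly to morphisms of the associated $\mathbf\Upsilon(B_n)^{\textrm{can}}$-collections, yielding the desired equivalence of fibred categories.

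The key point, and the main obstacle, is the folding of the central relation, which is exactly what forces the appearance of the canonical stack rather than $\mathcal Y(B_n)$. In the $A_{2n}$-labeling the relation $c_n$ contains the factor $\mathscr L_{\rho_n}^{\otimes\,\minus2}\otimes\mathscr L_{\rho_{n+1}}$; since the involution identifies $\rho_{n+1}$ with $\rho_n$ (because $2n\!+\!1\!-\!n=n\!+\!1$), these two factors combine to $\mathscr L_{\rho_1}^{\otimes\,\minus1}$ in the $B$-labeling, producing the relation $c_1\colon\mathscr L_{\tau_1}\otimes\mathscr L_{\rho_2}\otimes\mathscr L_{\rho_1}^{\otimes\,\minus1}\to\O_Y$ with a single, rather than a double, factor of $\mathscr L_{\rho_1}$. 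This is the halved last column of the matrix and is the reason one obtains $\mathcal Y(B_n)^{\textrm{can}}$. By contrast, in the even case of theorem \ref{thm:Y(Cn)} the central basis vector $y_0$ is itself fixed by the involution, so the two neighbouring factors around the centre add up and produce the entry $\minus2$ of $C(C_n)$; it is precisely the absence of a fixed central coordinate in the odd case that replaces this doubling by a halving.
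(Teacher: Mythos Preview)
Your proposal is correct and follows exactly the approach indicated by the paper, which merely states ``As in the case of degree-$2n$-pointed chains with involution one can prove'' and leaves the details implicit. You have in fact supplied more than the paper does: your explanation of why the folding of the central relation $c_n$ of the $\mathbf\Upsilon(A_{2n})$-collection yields $\mathscr L_{\rho_1}^{\otimes\,\minus1}$ rather than $\mathscr L_{\rho_1}^{\otimes\,\minus2}$, and hence why one lands on $\mathcal Y(B_n)^{\textrm{can}}$ rather than $\mathcal Y(B_n)$, is precisely the point the paper encodes in the displayed embedding $\mathcal C_{\mathbf\Upsilon(B_n)^{\textrm{can}}}\to\mathcal C_{\mathbf\Upsilon(A_{2n})}$ just before the theorem.
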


\begin{ex} In the case $n=1$ we have a scheme 
$\overline{\mathcal L}_1^{0,\pm}\cong\,\mathcal Y(B_1)^{\textrm{can}}$ 
isomorphic to $\P^1$.
\end{ex}

\begin{ex}
The toric orbifold $\overline{\mathcal L}_2^{0,\pm}\cong\,
\mathcal Y(B_2)^{\textrm{can}}$ is given by
$
\left(\;
\begin{matrix}
\minus2&1&1&0\\
1&\minus1&0&1\\
\end{matrix}\;
\right)
$.
In the picture of the stacky fan $\mathbf\Upsilon(B_2)^{\textrm{can}}$
the dotted arrow corresponds to the generator of the ray
$\rho_1$ determined by the stacky fan $\mathbf\Upsilon(B_2)$.

\bigskip
\noindent
\begin{picture}(150,70)(0,0)
\put(4,35){\makebox(0,0)[l]{\large$\mathbf\Upsilon(B_2)^{\textrm{can}}$}}

\put(5,-5){
\dottedline{1}(25,40)(125,40)
\dottedline{3}(25,55)(125,55)
\dottedline{3}(25,70)(125,70)
\dottedline{3}(25,25)(125,25)
\dottedline{3}(25,10)(125,10)

\dottedline{1}(75,5)(75,75)
\dottedline{3}(90,5)(90,75)
\dottedline{3}(105,5)(105,75)
\dottedline{3}(120,5)(120,75)
\dottedline{3}(60,5)(60,75)
\dottedline{3}(45,5)(45,75)
\dottedline{3}(30,5)(30,75)

\put(75,40){\vector(1,0){15}}\put(93,41){\makebox(0,0)[b]{$\tau_2$}}
\put(75,40){\vector(0,1){15}}\put(76,58){\makebox(0,0)[l]{$\tau_1$}}
\put(75,40){\vector(-2,1){30}}\put(47,58){\makebox(0,0)[l]{$\rho_2$}}
\put(75,40){\vector(1,-1){15}}\put(93,27){\makebox(0,0)[b]{$\rho_1$}}
\dottedline{1}(90,25)(105,10)
\put(104,11){\vector(1,-1){1}}

\put(83,46){\makebox(0,0)[c]{$\sigma_{\{1,2\}}$}}
\put(68,49){\makebox(0,0)[c]{$\sigma_{\{1\}}$}}
\put(86,34){\makebox(0,0)[c]{$\sigma_{\{2\}}$}}
\put(69,33){\makebox(0,0)[c]{$\sigma_{\emptyset}$}}
}
\end{picture}

\medskip
\bigskip
\noindent
We picture the types of pointed chains over the torus invariant 
divisors of the moduli stack $\overline{\mathcal L}_2^{0,\pm}$.

\medskip
\noindent
\begin{picture}(150,70)(0,0)
\put(0,10){
\put(2.5,0){
\qbezier(0,0)(20,4)(40,0)
\qbezier(35,0)(55,4)(75,0)
\qbezier(70,0)(90,4)(110,0)
\qbezier(105,0)(125,4)(145,0)
}
\qbezier(2.5,0.8)(7,0.1)(7.5,0)
\qbezier(147.5,0.8)(143,0.1)(142.5,0)
}
\put(5,6){\makebox(0,0)[c]{\tiny$b_1,b_2=0$}}
\put(5,1){\makebox(0,0)[c]{\small$\sigma_{\{1,2\}}$}}
\put(40,6){\makebox(0,0)[c]{\tiny$b_1,a_2=0$}}
\put(40,1){\makebox(0,0)[c]{\small$\sigma_{\{1\}}$}}
\put(40.5,11.5){\makebox(0,0)[b]{\large$\curvearrowright$}}
\put(40.1,14.5){\makebox(0,0)[b]{\small$\mu_2$}}
\put(75,6){\makebox(0,0)[c]{\tiny$a_1,a_2=0$}}
\put(75,1){\makebox(0,0)[c]{\small$\sigma_{\emptyset}$}}
\put(110,6){\makebox(0,0)[c]{\tiny$a_1,b_2=0$}}
\put(110,1){\makebox(0,0)[c]{\small$\sigma_{\{2\}}$}}
\put(145,6){\makebox(0,0)[c]{\tiny$b_1,b_2=0$}}
\put(145,1){\makebox(0,0)[c]{\small$\sigma_{\{1,2\}}$}}
\put(22.5,6){\makebox(0,0)[c]{\tiny$b_1=0$}}
\put(22.5,1){\makebox(0,0)[c]{\small$\tau_1$}}
\put(57.5,6){\makebox(0,0)[c]{\tiny$a_2=0$}}
\put(57.5,1){\makebox(0,0)[c]{\small$\rho_2$}}
\put(92.5,6){\makebox(0,0)[c]{\tiny$a_1=0$}}
\put(92.5,1){\makebox(0,0)[c]{\small$\rho_1$}}
\put(127.5,6){\makebox(0,0)[c]{\tiny$b_2=0$}}
\put(127.5,1){\makebox(0,0)[c]{\small$\tau_2$}}
\put(5,40){
\drawline(-2,24)(2,12)
\drawline(2,16)(-2,4)
\drawline(-1.2,8)(-1.2,-8)
\drawline(-2,-4)(2,-16)
\drawline(2,-12)(-2,-24)
\put(0,18){\circle*{0.8}}
\put(0,10){\circle*{0.8}}
\put(-1.2,0){\circle*{0.8}}
\put(0,-10){\circle*{0.8}}
\put(0,-18){\circle*{0.8}}
}
\put(22.5,40){
\drawline(-1,-11)(-1,11)
\drawline(-2,5)(3,23)
\drawline(-2,-5)(3,-23)
\put(-1,0){\circle*{0.8}}
\put(0.5,14){\circle*{0.8}}
\put(0.5,-14){\circle*{0.8}}
\put(1.9,19){\circle*{0.8}}
\put(1.9,-19){\circle*{0.8}}
}
\put(40,40){
\drawline(-1,-11)(-1,11)
\drawline(-2,5)(3,23)
\drawline(-2,-5)(3,-23)
\put(-1,0){\circle*{0.8}}
\put(0.5,14){\circle*{0.8}}
\put(0.5,-14){\circle*{0.8}}
\put(1.9,19){\circle*{0.8}}
\put(1.9,-19){\circle*{0.8}}
\put(4.9,19){\vector(-1,0){2}}
\put(3.5,14){\vector(-1,0){2}}
\spline(3.5,14)(4.1,14)(5.2,14.1)(6.7,15.5)
(6.9,17.5)(5.7,18.9)(4.9,19)
\put(4.9,-19){\vector(-1,0){2}}
\put(3.5,-14){\vector(-1,0){2}}
\spline(3.5,-14)(4.1,-14)(5.2,-14.1)(6.7,-15.5)
(6.9,-17.5)(5.7,-18.9)(4.9,-19)
}
\put(57.5,40){
\drawline(0,-18)(0,18)
\put(0,12){\circle*{0.8}}
\put(0,8){\circle*{0.8}}
\put(0,0){\circle*{0.8}}
\put(0,-8){\circle*{0.8}}
\put(0,-12){\circle*{0.8}}
}
\put(75,40){
\drawline(0,-16)(0,16)
\put(0,12){\circle*{0.8}}
\put(0,6){\circle*{0.8}}
\put(0,0){\circle*{0.8}}
\put(0,-6){\circle*{0.8}}
\put(0,-12){\circle*{0.8}}
}
\put(92.5,40){
\drawline(0,-18)(0,18)
\put(0,12){\circle*{0.8}}
\put(0,4){\circle*{0.8}}
\put(0,0){\circle*{0.8}}
\put(0,-4){\circle*{0.8}}
\put(0,-12){\circle*{0.8}}
}
\put(110,40){
\drawline(1,-13)(1,13)
\drawline(2,7)(-3,23)
\drawline(2,-7)(-3,-23)
\put(1,4){\circle*{0.8}}
\put(1,0){\circle*{0.8}}
\put(1,-4){\circle*{0.8}}
\put(-1,16.7){\circle*{0.8}}
\put(-1,-16.7){\circle*{0.8}}
}
\put(127.5,40){
\drawline(1,-13)(1,13)
\drawline(2,7)(-3,23)
\drawline(2,-7)(-3,-23)
\put(1,4){\circle*{0.8}}
\put(1,0){\circle*{0.8}}
\put(1,-4){\circle*{0.8}}
\put(-1,16.7){\circle*{0.8}}
\put(-1,-16.7){\circle*{0.8}}
}
\put(145,40){
\drawline(-2,24)(2,12)
\drawline(2,16)(-2,4)
\drawline(-1.2,8)(-1.2,-8)
\drawline(-2,-4)(2,-16)
\drawline(2,-12)(-2,-24)
\put(0,18){\circle*{0.8}}
\put(0,10){\circle*{0.8}}
\put(-1.2,0){\circle*{0.8}}
\put(0,-10){\circle*{0.8}}
\put(0,-18){\circle*{0.8}}
}
\end{picture}
\end{ex}

\bigskip\bigskip

\pagebreak

\end{document}